\documentclass[12pt]{article}
\usepackage{amsmath,amssymb,amsthm,mathtools,fullpage,enumerate}
\usepackage[colorlinks=true,linkcolor=red,citecolor=green,urlcolor=magenta]{hyperref}
\usepackage{microtype}
\hypersetup{pdftitle={CM Heights on Unitary Shimura Varieties},pdfauthor={Ziqi Guo}}
\numberwithin{equation}{subsection}
\newtheorem{theorem}{Theorem}[section]
\newtheorem{proposition}[theorem]{Proposition}
\newtheorem{lemma}[theorem]{Lemma}
\theoremstyle{definition}
\newtheorem{remark}[theorem]{Remark}
\theoremstyle{plain}
\newtheorem{corollary}[theorem]{Corollary}
\newcommand{\A}{\mathbb A}
\newcommand{\Af}{\mathbb A_f}
\newcommand{\Q}{\mathbb Q}
\newcommand{\C}{\mathbb C}
\newcommand{\BV}{\mathbb V}
\newcommand{\BW}{\mathbb W}
\newcommand{\LL}{\overline{\mathcal L}}
\newcommand{\UU}{\mathrm U(1,1)}
\newcommand{\U}{\mathrm U}
\newcommand{\Sch}{\mathcal S}
\newcommand{\dd}{\,\mathrm d}
\renewcommand{\Pr}{\mathcal P r}
\DeclareMathOperator{\vol}{vol}
\DeclareMathOperator{\Tr}{Tr}
\DeclareMathOperator{\ord}{ord}
\DeclareMathOperator{\Lie}{Lie}
\DeclareMathOperator{\Fil}{Fil}
\DeclareMathOperator{\CT}{CT}
\DeclareMathOperator{\length}{length}
\DeclareMathOperator{\Int}{Int}
\newcommand{\avint}{\mathop{\rlap{\,--}\!\!\int}\nolimits}
\begin{document}
\title{Heights of small CM cycles on Unitary Shimura Varieties}
\author{Ziqi Guo}
\maketitle

\begin{abstract}
We compute the heights of small CM cycles on unitary Shimura varieties for general choices of the one-dimensional Hermitian subspace $W$ defining the embedding. The formula contains the logarithmic derivative of a quadratic Hecke $L$-function and explicit local terms depending on the choice of $W$. We prove this formula and an arithmetic Siegel--Weil identity by comparing the holomorphic projection of a mixed theta--Eisenstein derivative with arithmetic intersections of special divisors.
\end{abstract}

\begingroup
\small
\tableofcontents
\endgroup
\clearpage

\section{Introduction}

The goal of this paper is to compute the heights of a class of CM points on unitary Shimura varieties. The main idea is to compare the holomorphic projection of a mixed theta--Eisenstein derivative with arithmetic intersections of special divisors. This comparison gives a height formula involving the logarithmic derivative of a quadratic Hecke $L$-function and explicit local terms. We also obtain an arithmetic Siegel--Weil formula.

\subsection{Modular heights of CM points}\label{CM height introduction}

Let $F$ be a totally real field, let $E/F$ be a CM extension, and let $\eta$ be its quadratic Hecke character. We write $L_f(s,\eta)$ for the product of the finite local Hecke $L$-factors. Let $V$ be an $E/F$-Hermitian space of dimension $n+1$, with $n\ge1$, of signature $(n,1)$ at a fixed real place $\iota$ and positive definite at the other real places.

We use the standard lattice $\Lambda$, its stabilizer $U$, and the arithmetic hypotheses of \cite[Assumption~2.4]{Guo2}, recalled in Section~\ref{notation subsection}. Let $\mathcal X_U$ be the normalized integral model of the unitary Shimura variety $X_U$, and let $\LL$ be its metrized Hodge line bundle. These objects are defined in Section~\ref{integral model subsection}.

A one-dimensional subspace $W\subset V$ which is negative at $\iota$ and positive elsewhere defines a normalized CM cycle $P$ of degree one. We denote its normalized arithmetic closure by $\mathcal P$; see \eqref{CM cycle normalization}. For a finite place $v$, let $c_v$ be the nonnegative integer in \eqref{local conductor definition}, determined by the lattice $W_v\cap\Lambda_v$, and let $N_v$ be the cardinality of the residue field of $F_v$. Thus $c_v$ records how $W_v$ meets the fixed lattice at $v$. For $F=\Q$, we assume $S_0(W)=\varnothing$, where $S_0(W)$ is defined in \eqref{S0 definition}.

\begin{theorem}\label{CM height theorem}
For the above data,
\begin{equation*}
    \LL\cdot\mathcal P=-2\frac{L_f'(0,\eta)}{L_f(0,\eta)}
       +\sum_{v\nmid\infty}a_v(c_v)\log N_v,
\end{equation*}
where, for an integer $c\ge0$,
\begin{equation*}
    a_v(c)=\begin{cases}
        c,&v\text{ split in }E/F,\\[1mm]
        \displaystyle c-\frac{2N_v}{N_v^2-1}(1-N_v^{-c}),&v\text{ inert in }E/F,\\[2mm]
        \displaystyle c-\frac{1-N_v^{-c}}{N_v-1},&v\text{ ramified in }E/F.
    \end{cases}
\end{equation*}
\end{theorem}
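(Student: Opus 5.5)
The plan is to follow the strategy announced in the abstract: express $\LL\cdot\mathcal P$ through the arithmetic intersection of $\mathcal P$ with arithmetic special divisors, and compare this with the holomorphic projection of a mixed theta--Eisenstein derivative. Concretely, I would write $V=W\oplus W^\perp$, with $W^\perp$ positive definite at every real place and of dimension $n$. The pullback to the CM cycle $P$ of the generating series $\sum_t \widehat{\mathcal Z}(t,\phi)$ of arithmetic special divisors is then a product of two pieces: a theta series $\theta(g,\phi_{W^\perp})$ for $W^\perp$, and the derivative at $s=0$ of an incoherent Eisenstein series $E'(0,g,\phi_W)$ attached to the one-dimensional space $W$, which is negative at $\iota$. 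The modularity of the arithmetic generating series (from \cite{Guo2}) shows that $\widehat{\mathcal Z}(\phi)\cdot\mathcal P$ is a holomorphic modular form. Its constant term involves $-\LL\cdot\mathcal P$, because the constant term of the generating series is $-\LL$ (or $\LL^\vee$).

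The key steps, in order, are as follows.

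\begin{enumerate}[(1)]
\item Take the holomorphic projection of $\theta(\cdot,\phi_{W^\perp})\,E'(0,\cdot,\phi_W)$ and compute its Fourier coefficients for $t\neq0$. Each coefficient is a sum over $x=(x_1,x_2)$ with $x_2\in W^\perp$ and $x_1\in W$, $q(x)=t$, of local Whittaker derivatives $W'_{t-q(x_2),v}(0,\phi_{W,v})$. The archimedean contribution comes from the quasi-projection integral.
\item Compute the finite part of the arithmetic intersection $\widehat{\mathcal Z}(t)\cdot\mathcal P$. I would do this by Gross--Zagier/Kudla--Rapoport-style local intersection multiplicities at $v$ inert, ramified and split, using the Serre--Tate or Rapoport--Zink uniformization along the CM point. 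At the archimedean place I would use the Green function evaluated at $P$.
\item Show that the difference between the holomorphic projection and the intersection generating series is a modular form whose nonconstant coefficients vanish, up to explicit local error terms. Those errors occur exactly at places where $W_v\cap\Lambda_v$ is not self-dual, that is, where $c_v>0$. Hence the difference is a constant.
\item Compare constant terms. The constant term of $E'(0,\cdot,\phi_W)$ gives $-2L_f'(0,\eta)/L_f(0,\eta)$ together with local correction terms.
\end{enumerate}

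The local terms $a_v(c_v)\log N_v$ should come out of a direct computation of the local Whittaker derivative $W'_{0,v}(0,\phi_v)$ for the characteristic function of $W_v\cap\Lambda_v$, a lattice of "level" $c_v$. This is a geometric-series computation. In the inert case the sum runs over powers of $N_v^{-2}$, which is where the factor $2N_v/(N_v^2-1)$ comes from; in the ramified case it runs over powers of $N_v^{-1}$, giving $(1-N_v^{-c})/(N_v-1)$; in the split case the series contributes only the linear term $c$. I would normalize by the value at $c=0$, so that $a_v(0)=0$ as the statement requires. For $F=\Q$, the hypothesis $S_0(W)=\varnothing$ should be used to kill the extra noncompact boundary contributions, and the comparison of constant terms via the Fourier coefficient argument then works in the same way.

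The main obstacle I expect is step (3): matching local intersection numbers with Whittaker derivatives at places where $c_v>0$ and the CM point has nonmaximal level structure. Here the naive local Siegel--Weil match fails, and one must show that the discrepancy depends only on $t$ through a form that is again an explicit theta series, so that it can be subtracted. My first attempt would be to choose the test function $\phi_W$ so that it vanishes near $0$ at an auxiliary split place; this forces all "improper" terms to be supported on $W$-components. The remaining error would then be handled by an explicit lattice-counting computation in the one-dimensional space $W_v$.
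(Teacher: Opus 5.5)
The gap is in how you extract $\LL\cdot\mathcal P$. You expect the height to sit in the constant term of the generating series and to be read off by comparing constant terms once the nonconstant coefficients agree. In this setting there is no constant term to compare. $\Pr I'(0,g,\Phi)$ is a cuspidal projection. The steps that make the comparison work all use $E_{\BV}(s,g,\Phi)\equiv0$: Proposition~\ref{projection equality}, the reduction to $\widehat{\mathcal Z}_*(g,\Phi)\cdot\mathcal P$ in Proposition~\ref{cuspidal height pairing}, the archimedean identity \eqref{archimedean proper comparison}, and the change to admissible Green functions. This vanishing is arranged by \eqref{degenerate phi} at two split places, so $\Phi(0)=0$. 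In any case, a weight $n+1$ form consisting of a constant term alone is zero.

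The height enters instead through the \emph{nonconstant} coefficients indexed by $0\ne x\in W^\perp(E)$. These divisors contain the generic CM point, so your step (2) (proper lengths plus Green values at $P$) does not apply to them. One needs the arithmetic adjunction formula \eqref{adjunction integral equation}, which rests on Lemma~\ref{Hodge stabilization}, the conormal comparison of Proposition~\ref{unit conormal comparison} at $S_0(W)$, and the limit \eqref{central limit}. On the analytic side the same $x$ carry $-c_1$ from the constant term of $E'_{\BW}$. So the difference series has the form \eqref{direct comparison}: theta series on the $V(v)$ plus $C\,\Omega_\Phi$, with $C=-\bigl(\LL\cdot\mathcal P+2L_f'(0,\eta)/L_f(0,\eta)-\sum_v a_v(c_v)\log N_v\bigr)$. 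Its nonconstant coefficients vanish only if the theorem is already known, so step (3) is circular.

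What determines $C$ is the Key lemma. By \eqref{theta factorization}, $\Omega_\Phi$ is the $W^\perp$-theta series times the archimedean Gaussian of $\BW$ at $0$. This factor is not invariant under a lower unipotent $\gamma$ with $\gamma_f\in K_f$, while every other term is automorphic. With Lemma~\ref{nonzero rational theta coefficient}, this forces $C=0$. Your suggested remedy, making $\phi_W$ vanish near $0$ at a split place, would set $\Phi(x,0)=0$ for all $x\in W^\perp$ and erase exactly the terms carrying the height. The degeneracy must be imposed on all of $V_v$, killing the Siegel section while keeping $\int\Phi_v^\perp>0$ (Proposition~\ref{two place proposition}).

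Two further steps fail as written. First, the local terms do not come from the zero Whittaker derivative of $1_{W_v\cap\Lambda_v}$ alone. That derivative is $-a_v(r_v)\log N_v$ (Proposition~\ref{zero Whittaker coefficients}), and $r_v\ne c_v$ at ramified places. When $c_v>0$, $1_{\Lambda_v}$ is a sum over the nonzero cosets of Proposition~\ref{lattice coset decomposition}. Their zero-coefficient derivatives are nonzero and cancel only against the proper lengths $m^0_{\Phi_v}$ of homomorphisms in $W_v^\perp$ that do not lift over the CM component. Computing these requires identifying the CM formal module as the quasi-canonical lift $G_{c_v}$ with the right Tate lattice and polarization (Lemma~\ref{local product stabilization}), and then Mihatsch's lifting formula (Lemma~\ref{canonical to quasi length}). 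At ramified places the resulting extra $\log N_v$ is absorbed by $\log d_{E/F}$ in $c_1$. At $S_0(W)$ one needs $\Phi_v=1_{\Lambda_v^\vee}$ and the divisor $\mathcal Y(x)$. Also, the nonzero Whittaker coefficients live on the nearby coherent spaces $V(v)$, not on $W$.

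Second, for $F=\Q$ the variety is not proper, so the admissible pairing is unavailable. The hypothesis $S_0(W)=\varnothing$ has nothing to do with boundary terms. The paper bypasses the comparison there (Remark~\ref{F=Q remark}). It proves $n=1$ from Nakkajima--Taguchi and Yuan's maximal-order formula. For $n\ge2$ it uses $S_0(W)=\varnothing$ to place $M_v$ in hyperbolic rank-two summands (Lemma~\ref{all place line types}), so that an auxiliary plane $W\oplus Ex$ exists. It then concludes by linear independence of the $\log p$.
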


In fact, for each finite place $v$, define the quadratic splitting symbol
\begin{equation*}
    \chi_v=\begin{cases}
        1,&v\text{ split in }E/F,\\
        -1,&v\text{ inert in }E/F,\\
        0,&v\text{ ramified in }E/F.
    \end{cases}
\end{equation*}
Thus $\chi_v$ agrees with the value of $\eta$ at every unramified place, and the three local terms in Theorem~\ref{CM height theorem} can be written uniformly as
\begin{equation*}
    a_v(c)=c-\frac{1-\chi_v}{(N_v-\chi_v)(1-N_v^{-1})}(1-N_v^{-c}),\qquad c\ge0.
\end{equation*}

The local terms $a_v(c_v)\log N_v$ have a direct relation with the change of Faltings heights under isogenies. For an imaginary quadratic field $K$ and an integer $f\ge1$, Nakkajima--Taguchi \cite{NT} show that the stable Faltings height of an elliptic curve with endomorphism ring $\mathbb Z+f\mathcal O_K$ differs from that for the maximal order $\mathcal O_K$ by $\frac12\sum_{p\mid f}a_p(\ord_p(f))\log p$. Here $a_p$ is given by the three cases above with $N_v=p$ and the splitting of $p$ taken in $K/\Q$. Kudla--Rapoport--Yang \cite[Theorem~10.7]{KRY1} obtain the same local functions, without the factor $1/2$, in their comparison of the Faltings height of a CM abelian surface on a quaternionic Shimura curve over $\Q$ with that of a product of two elliptic curves with CM by $\mathcal O_K$. Their proof uses a calculation of invariant differentials of isogeny kernels \cite[Proposition~10.3]{KRY1}. The canonical and quasi-canonical formal modules in that computation also occur in our local intersection formulas in Section~\ref{local intersection subsection}. All these earlier results concern Faltings heights in the case of imaginary quadratic CM fields, so they do not directly give the degree of the tautological line bundle. Theorem~\ref{CM height theorem} gives the corresponding local corrections for our line bundle over the general CM extensions $E/F$ specified above, for general $W$ in every dimension $n\ge1$. We emphasize that the main theorem also holds when $F=\Q$ under this condition; see Remark~\ref{F=Q remark}.

The CM cycles studied here generalize the \emph{small CM cycles} of \cite[Section~2]{BHKRY} by allowing more general embeddings of the one-dimensional Hermitian subspace $W$ into $V$ relative to the fixed lattice $\Lambda$. Moreover, it is not hard to check from the explicit formula that these points satisfy the following Northcott property.

\begin{corollary}\label{small CM Northcott}
Fix $E/F$, $V$, $\Lambda$ and $U$, and let $\mathcal C_{\mathrm{sm}}$ be the set of small CM cycles arising from the subspaces $W$ in Theorem~\ref{CM height theorem}. For every $B\in\mathbb R$, the set
\begin{equation*}
    \{x\in\mathcal C_{\mathrm{sm}}:h_{\LL}(x)\le B\}
\end{equation*}
is finite.
\end{corollary}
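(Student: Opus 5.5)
Apply Theorem~\ref{CM height theorem} and turn the Northcott property into a finiteness statement about local conductors. For a small CM cycle $x=P$ attached to $W$, we have $h_{\LL}(x)=\LL\cdot\mathcal P$. Fixing $E/F$, $V$, $\Lambda$ and $U$, the term $-2L_f'(0,\eta)/L_f(0,\eta)$ is a constant independent of $W$. So the height bound becomes
\begin{equation*}
    \sum_{v\nmid\infty}a_v(c_v)\log N_v\le B',\qquad B'=B+2\frac{L_f'(0,\eta)}{L_f(0,\eta)}.
\end{equation*}

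The first step is a uniform lower bound for the local terms. From the uniform formula, $a_v(0)=0$. For $c\ge1$,
\begin{equation*}
    a_v(c)\ge c-\frac{1-\chi_v}{(N_v-\chi_v)(1-N_v^{-1})}.
\end{equation*}
The subtracted constant is $0$ when $\chi_v=1$. When $\chi_v=-1$ it is $2N_v/(N_v^2-1)\le 4/3$, and when $\chi_v=0$ it is $1/(N_v-1)\le1$. In the inert and ramified cases one checks directly that $a_v(1)>0$. Indeed, for inert $v$,
\begin{equation*}
    a_v(1)=1-\frac{2}{N_v+1}>0,
\end{equation*}
and for ramified $v$, $a_v(1)=1-1/N_v>0$. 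Since $a_v(c)$ is increasing in $c$ (its increment is at least $1-\tfrac{2N_v}{N_v^2-1}N_v^{-c}(1-N_v^{-1})>0$), we get $a_v(c)\ge \tfrac12 c$ for all $c\ge0$, with room to spare.

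Consequently
\begin{equation*}
    \sum_v c_v\log N_v\le 2B',
\end{equation*}
so only finitely many $v$ have $c_v\neq0$, and each such $c_v$ is bounded. The tuple $(c_v)_v$ therefore ranges over a finite set.

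The main step is to show that only finitely many small CM cycles share a given conductor tuple $(c_v)_v$. Here I would use the definition \eqref{local conductor definition} of $c_v$ through $W_v\cap\Lambda_v$. The cycle $P$ depends only on the $U$-orbit of $W$ (up to the $\U(V)(F)$-action, which gives the same CM cycle on $X_U$). Fixing $c_v$ at all $v$ fixes the isomorphism class of the local pair $(W_v\cap\Lambda_v\subset\Lambda_v)$ up to finitely many $U_v$-orbits. At almost all $v$, $c_v=0$ forces $W_v\cap\Lambda_v$ to be a primitive self-dual-type line, which forms a single $U_v$-orbit. The anticipated obstacle is making this orbit count precise at the finitely many bad places, i.e.\ where $\Lambda_v$ is not self-dual or $v$ is ramified. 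There I would argue by compactness: the set of lines $W_v$ with fixed $c_v$ is an open compact subset of the relevant projective space, a finite union of $U_v$-orbits.

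Globally, $W$ is determined up to $\U(V)(F)$ by its Hermitian norm class, which is fixed by the signature and local data. Combining with the finitely many adelic $U$-orbits and using finiteness of $\U(W^\perp)(F)\backslash\U(W^\perp)(\Af)/U'$ bounds the number of cycles. This gives the finiteness of $\{x\in\mathcal C_{\mathrm{sm}}:h_{\LL}(x)\le B\}$.
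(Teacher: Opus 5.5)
Your proposal is correct and is essentially the argument the paper intends; the paper gives no separate proof, only the remark that Northcott is easily checked from the explicit formula. In both, a uniform linear lower bound on $a_v$ confines $(c_v)_v$ to a finite set, and Witt's theorem, finiteness of local $U_v$-orbits, and class-number finiteness for $\U(W^\perp)$ then leave finitely many cycles per tuple.

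Two small repairs are needed:
\begin{enumerate}[(1)]
\item For $\gamma\in\U(V)(F)$, the line $\gamma W$ gives the Hecke translate $P_W\gamma^{-1}$, not the same cycle. So within one $\U(V)(F)$-orbit and one adelic $U$-orbit, the cycles are $P_{W_0}s\gamma_0^{-1}$ with $s\in\U(W_0^\perp)(\Af)$, taken modulo $\U(W_0^\perp)(F)$ and an open compact subgroup. This is exactly where your class-number step enters.
\item The bound $a_v(c)\ge c/2$ does not follow from monotonicity and $a_v(1)>0$ alone. It does hold, because nonsplit places have $N_v\ge3$ (every place above $2$ splits), and $a_v(c)\ge c/3$ holds in general.
\end{enumerate}
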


More broadly, the heights of certain special arithmetic 1-cycles on Shimura varieties are often of great importance. For instance, Colmez's conjecture expresses the Faltings height of a CM abelian variety in terms of logarithmic derivatives of Artin $L$-functions \cite{Colmez}. Its averaged form was proved independently by Andreatta--Goren--Howard--Madapusi Pera \cite{AGHMP} and Yuan--Zhang \cite{YZ}. The former use CM cycles on orthogonal Shimura varieties, while the latter relate averages of Faltings heights to heights of CM points on quaternionic Shimura curves. These results show how the arithmetic degree can determine special values of $L$-functions.

More generally, Zhao \cite[Theorem~1.1]{Zhao1} computes heights of special points on quaternionic Shimura varieties in terms of Faltings heights of full CM-types and explicit discriminant terms. His formula allows partial CM-types and extends the quaternionic curve comparison of Yuan--Zhang to higher-dimensional quaternionic Shimura varieties. The line bundle in that formula is the quaternionic Hodge bundle, which agrees with the canonical bundle at sufficiently small level. Our height uses the tautological unitary Hodge line bundle. Its relation with the quaternionic Hodge bundle is given by the integral comparison in \cite[Section~3]{Guo2}. In \cite{Zhao2}, Zhao relates Colmez's period heights to arithmetic degrees of Hermitian line bundles and reduces the full Colmez conjecture to an arithmetic period formula for surfaces. These results concern the relation between period heights and geometric heights; the local terms in Theorem~\ref{CM height theorem} instead measure the integral position of a fixed CM subspace.

For the CM embeddings chosen in \cite{Guo2}, that comparison and \cite[Theorem~1.7]{YZ} give the height formula of \cite[Theorem~1.1]{Guo2}. Theorem~\ref{CM height theorem} allows \textit{every} $W$ with the prescribed signatures on the given integral model. The logarithmic derivative depends only on $E/F$. For the embeddings in \cite[Theorem~1.1]{Guo2} with the present lattice data, every $c_v$ is zero. For a general $W$, the additional terms $a_v(c_v)\log N_v$ measure the change caused by its position relative to $\Lambda$.

\subsection{An arithmetic Siegel--Weil formula}\label{ASW introduction}

Our second result compares two holomorphic cusp forms associated with the CM cycle. The analytic series is the holomorphic projection of a mixed theta--Eisenstein derivative; the arithmetic series records intersections of special divisors with $\mathcal P$.

We write $\A$ and $\A_E$ for the adele rings of $F$ and $E$, respectively, and $\A_f$ for the finite adeles of $F$. In this subsection we further assume $F\ne \mathbb{Q}$. Changing the sign of $W$ at $\iota$ gives a positive incoherent Hermitian space $\BW$ of dimension one. We put $\BV=W^\perp(\A_E)\oplus\BW$ and use the Schwartz function $\Phi$ specified in Section~\ref{split data subsection}. For $\Phi=\phi_1\otimes\phi_2$ with respect to this decomposition, the mixed series is
\begin{equation*}
    I(s,g,\phi_1\otimes\phi_2)=\theta(g,\phi_1)E_{\BW}(s,g,\phi_2),
    \qquad g\in\UU(\A).
\end{equation*}
Here $\theta$ is the theta series of $W^\perp$, and $E_{\BW}$ is the Eisenstein series of $\BW$. We extend the definition by linearity and denote the cuspidal holomorphic projection of $I'(0,g,\Phi)$ by $\Pr I'(0,g,\Phi)$. We call this the \textit{derivative series}. Section~\ref{mixed subsection} gives the definitions and normalizations.

Let $\widehat{\mathcal Z}_*(g,\Phi)$ be the positive Fourier part of the normalized admissible arithmetic generating series of special divisors, with generic fiber $Z_*(g,\Phi)$. The \textit{height series} is the pairing
\begin{equation*}
    \widehat\xi=\frac{\widehat c_1(\LL)^n}{\deg_L X_U},\qquad
    \mathcal H(g,\Phi)=\left(\widehat{\mathcal Z}_*(g,\Phi)
      -\frac{\deg_L Z_*(g,\Phi)}{\deg_L X_U}\LL\right)
      \cdot(\mathcal P-\widehat\xi).
\end{equation*}
The arithmetic extensions and pairings are defined in Section~\ref{integral model subsection}.

For a finite nonsplit place $v$, let $W(v)$ be obtained from $\BW$ by changing its invariant at $v$, and put $V(v)=W^\perp\perp W(v)$. We use the local difference
\begin{equation*}
    \bar k_{\Phi_v}(1,y)=2k_{\Phi_v}(1,y)-m_{\Phi_v}(y)\log N_v,
    \qquad y\notin W_v^\perp,
\end{equation*}
where $k_{\Phi_v}$ is the Whittaker derivative in \eqref{local k definition} and $m_{\Phi_v}\log N_v$ is the proper local intersection defined in Section~\ref{standard finite subsection}. Section~\ref{local intersection subsection} proves that this difference extends to a Schwartz function on $V(v)_v$.

The theta series below also uses the action of the norm-one torus $E^1$. For $\tau\in E^1(\A)$, let $\tau$ act on $W(v)$ and trivially on $W^\perp$, and write $r_{V(v)}(g,\tau)$ for this action together with the Weil representation. For $\varphi\in\Sch(V(v)(\A_E))$, put
\begin{equation*}
    \theta(g,\tau,\varphi)=\sum_{y\in V(v)(E)}r_{V(v)}(g,\tau)\varphi(y).
\end{equation*}
Here $\Sch$ denotes the Schwartz space. We write $\avint_{[E^1]}$ for the average over $[E^1]=E^1(F)\backslash E^1(\A)$, and a superscript $v$ omits the component at $v$. Let $K_f\subset\UU(\A_f)$ be the sufficiently small compact open subgroup chosen at the end of Section~\ref{split data subsection}.

\begin{theorem}\label{ASW main theorem}
With the notation above, for $g\in\UU(\A)$ with $g_f\in K_f$,
\begin{equation}\label{ASW main identity}
    \Pr I'(0,g,\Phi)+\mathcal H(g,\Phi)
    =-\sum_{\substack{v\nmid\infty\\v\ \mathrm{nonsplit}}}
      \avint_{[E^1]}\theta\bigl(g,\tau,\bar k_{\Phi_v}\otimes\Phi^v\bigr)\,\dd\tau.
\end{equation}
Moreover, only finitely many places $v$ contribute to the sum.
\end{theorem}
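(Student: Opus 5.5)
We compare the two sides Fourier coefficient by Fourier coefficient, in the style of Yuan--Zhang--Zhang and Bruinier--Howard--Kudla--Rapoport--Yang. Since $g_f\in K_f$ and all series involved are automorphic, it suffices to match the $t$-th Whittaker coefficients for totally positive $t\in F$ at $g=g_\infty$. We then extend to $g_f\in K_f$ by the $K_f$-invariance of $\Phi$, which is how $K_f$ is chosen at the end of Section~\ref{split data subsection}.

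\emph{Analytic side.} We first compute $I'(0,g,\Phi)$. Write $E_{\BW}$ by its Fourier expansion. Because $\BW$ is incoherent, $E_{\BW}(0,g,\phi_2)=0$, so
\[
I'(0,g,\Phi)=\theta(g,\phi_1)E'_{\BW}(0,g,\phi_2).
\]
For $t\ne0$ the $t$-th Whittaker coefficient of $E'_{\BW}$ is a sum over places $v$. It is nonzero only if $\BW_v$ fails to represent $t$ at exactly one place. For finite nonsplit $v$ we get $W'_{t,v}(0)\prod_{w\ne v}W_{t,w}(0)$, which by the local Siegel--Weil formula equals the coefficient of a coherent theta integral on $W(v)$. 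This is where the average over $[E^1]$ and the swap $\BW\rightsquigarrow W(v)$ come from.

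Multiplying by $\theta(g,\phi_1)$ and collecting terms, the $t$-th coefficient becomes a sum over finite nonsplit $v$ of
\[
\avint_{[E^1]}\sum_{y\in V(v)(E),\,q(y)=t} 2k_{\Phi_v}(1,y)\,r(g,\tau)\Phi^v(y)\dd\tau,
\]
plus archimedean contributions. We then apply the holomorphic projection formula, using that $F\ne\Q$ to avoid growth issues. The archimedean terms, together with the constant and degenerate ($y\in W^\perp$) terms, should produce the archimedean Green function values, the $\LL$-correction, and the self-intersection pieces.

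\emph{Geometric side.} Next we decompose the height series $\mathcal H(g,\Phi)$. The term $\widehat{\mathcal Z}_*\cdot\mathcal P$ splits into an archimedean part given by Green functions at the CM points, plus finite parts.
\begin{itemize}
\item The finite parts split into proper intersections $m_{\Phi_v}(y)\log N_v$ (Section~\ref{standard finite subsection}) for $y\notin W_v^\perp$, and improper ones coming from components of $Z$ containing $P$.
\item The improper ones are handled by the adjunction-type formula $-\LL\cdot\mathcal P$. Subtracting $\widehat\xi$ and the $\deg_L$-normalized $\LL$ makes them cancel against the constant-type terms. This is the purpose of the normalization, and it uses Theorem~\ref{CM height theorem}, or more precisely its local ingredients.
\item At split places there is no finite intersection, and the analytic side has no split-place derivative either.
\end{itemize}

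\emph{Comparison.} We match the archimedean holomorphic-projection term with the Green function contribution, using the standard quasi-limit computation of Yuan--Zhang--Zhang. The remaining finite terms then combine into
\[
-\sum_v \avint\theta\bigl(g,\tau,(2k_{\Phi_v}-m_{\Phi_v}\log N_v)\otimes\Phi^v\bigr).
\]
By Section~\ref{local intersection subsection}, $\bar k_{\Phi_v}$ extends to a Schwartz function on $V(v)_v$. This makes the right side a genuine theta series, so the identity holds as automorphic forms.

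\emph{Finiteness of the sum over $v$.} At almost all $v$ the data are unramified, $c_v=0$, and $\Phi_v$ is standard. The local computation of Section~\ref{local intersection subsection} then gives $\bar k_{\Phi_v}=0$.

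\emph{Main obstacle.} The hard step is the local identity at nonsplit $v$ with $c_v>0$. One must show that $2k_{\Phi_v}-m_{\Phi_v}\log N_v$ is smooth across $W_v^\perp$, which requires the quasi-canonical lifting computations, and one must correctly track the improper-intersection and self-intersection constants. We would first check these constants against Theorem~\ref{CM height theorem}.
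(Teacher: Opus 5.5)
\textbf{There is a gap in the $W^\perp$ terms.} Your handling of the proper terms matches the paper: analytic $2k_{\Phi_v}$ against arithmetic $m_{\Phi_v}\log N_v$, archimedean cancellation, and Schwartz extension of $\bar k_{\Phi_v}$. The step you treat as bookkeeping is the terms indexed by $0\ne x\in W^\perp(E)$, and your account of it is wrong.

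The subtractions of $\widehat\xi$ and of $\frac{\deg_L Z_*}{\deg_L X_U}\LL$ cancel nothing here. For this $\Phi$ both vanish identically by Proposition~\ref{cuspidal height pairing}, because of $S_2$. They only make the pairing independent of admissible lifts and cuspidal.

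What happens instead is that several groups of terms collapse, via \eqref{all finite boundaries}, into one term. These are the restricted degrees $i(x,\mathcal P)$ from Proposition~\ref{restricted degree proposition}; the analytic terms $(-c_1+dH_{n-1})\Omega_\Phi$, $\log|q(x)|_f$ and $-\sum_v c_{\Phi_v}$; the perpendicular lengths $m^0_{\Phi_v}$; and the values $\bar k_{\Phi_v}(1,x)$ you must add to complete $\sum_{y\notin W^\perp(E)}$ into a full theta series. The result is the single term
\begin{equation*}
-\Bigl(\LL\cdot\mathcal P+2\frac{L_f'(0,\eta)}{L_f(0,\eta)}-\sum_{v\nmid\infty}a_v(c_v)\log N_v\Bigr)\Omega_\Phi(g)
\end{equation*}
of Proposition~\ref{direct comparison proposition}. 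Its coefficient contains the unknown global number $\LL\cdot\mathcal P$, so no ``local ingredient'' can kill it. It vanishes only by Theorem~\ref{CM height theorem} itself, which is proved by the Key lemma. There, $\mathcal D$ and the theta integrals are automorphic. By \eqref{theta factorization}, $\Omega_\Phi$ carries the extra factor $r_{\BW_\infty}(g_\infty)\Phi_{\BW,\infty}(0)$. This factor changes under a lower unipotent $\gamma\in\mathrm{SU}(1,1)(F)$ with $\gamma_f\in K_f$, while $\theta(g_\infty,1,\Phi^\perp)\ne0$ by Lemma~\ref{nonzero rational theta coefficient}. The stated identity then follows by substituting the height formula into \eqref{direct comparison}. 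You must produce that explicit $\Omega_\Phi$ term before Theorem~\ref{CM height theorem} can be used.

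\textbf{Your claim about split places would also break this assembly.} Split places have no $k_{\Phi_v}$ and no proper term with nonzero $W_v$-component. They do contribute on $W_v^\perp$, by Propositions~\ref{zero Whittaker coefficients} and~\ref{split general matching}. First, $c_{\Phi_v}$ is nonzero whenever $r_v+v(d_v)>0$. Second, $m^0_{\Phi_v}$ is nonzero on classes with exactly one nonintegral coordinate. Together they give $(r_v+v(d_v))\log N_v$, which is the local term $a_v(c_v)\log N_v=r_v\log N_v$ plus the $v$-part of $\log d_F$. At $v\in S_2$, the analytic term \eqref{split zero derivative} is not proportional to $\Phi_v(x,0)$. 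It is cancelled only by the arithmetic correction of Proposition~\ref{split arithmetic correction}. If you drop these contributions, the leftover sum over $W^\perp(E)$ is not a multiple of $\Omega_\Phi$, and the height formula does not remove it.
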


The arithmetic Siegel--Weil formula relates arithmetic intersections of special cycles to derivatives of Eisenstein series. It is a central part of Kudla's program \cite{Kudla}, which extends the relation between geometric degrees and values of Eisenstein series. On Shimura curves, the work of Kudla--Rapoport--Yang \cite{KRY} establishes such identities for arithmetic generating series. The Gross--Zagier formula of Yuan--Zhang--Zhang \cite{YZZ} and the CM height formula of Yuan--Zhang \cite{YZ} also arise from comparisons of analytic and arithmetic generating series. Yuan \cite{Yuan} uses the comparison of these series to compute modular heights of quaternionic Shimura curves.

On orthogonal Shimura varieties, Bruinier--Yang \cite{BY} compute the archimedean contribution to pairings of arithmetic Heegner divisors with CM cycles and prove the predicted relation with $L$-derivatives in several low-dimensional cases. For unitary Shimura varieties, Liu \cite{Liu1,Liu2} relates arithmetic theta lifts to central derivatives of $L$-functions. Bruinier--Howard--Yang \cite{BHY} study height pairings of special divisors, and Qiu \cite{Qiu} proves modularity for arithmetic special divisors with admissible extensions. The comparison in \cite[Theorem~1.3]{Guo2} treats a particular choice of CM embedding. Formula~\eqref{ASW main identity} gives the comparison for general $W$, including the theta series arising from the finite local differences.

It is worth noting that the reason we assume $F\neq\mathbb{Q}$ here is that Proposition \ref{cuspidal height pairing} does not directly hold when $F=\mathbb{Q}$, since we require the integral model to be proper. All the other local computations in this paper, however, remain unchanged in the case $F=\mathbb{Q}$.

\subsection{Idea of proof and organization}

For $F\ne\Q$, we prove both theorems by computing the derivative series and the height series separately. Their sum is a holomorphic cusp form. Combining the Whittaker derivatives with the arithmetic intersection numbers for $y\notin W^\perp$, we obtain theta series on the spaces $V(v)$. The remaining contribution is a scalar multiple of the sum over nonzero vectors in $W^\perp(E)$. The Key lemma of \cite[Lemma~6.1]{YZ} and \cite[Lemma~2.3]{Yuan}, in the unitary form of \cite[Lemma~2.4]{Guo1}, determines this scalar and gives Theorem~\ref{CM height theorem}. Substitution in the same identity then proves Theorem~\ref{ASW main theorem}.

In Section~\ref{analytic section}, we define the derivative series and state its decomposition in Theorem~\ref{derivative decomposition theorem}. We choose functions at two split places whose Siegel sections for $\BV$ vanish and whose restriction to $W^\perp$ has a nonzero theta series. Then we decompose the local lattices relative to $W$ and compute the zero and nonzero Whittaker coefficients. These compute explicit expressions of the functions $c_{\Phi_v}$ and $k_{\Phi_v}$ in the derivative series.

In Section~\ref{height series section}, we define the height series and decompose it in Theorem~\ref{height decomposition theorem}. The proper terms are computed from the lifting lengths of homomorphisms between canonical and quasi-canonical formal modules. We then compute the degrees of the divisor line bundles restricted to the CM cycle. At a ramified place where $W\cap\Lambda$ is self-dual, we use the dual lattice to identify the relevant lifting divisor and compare its normal bundle with the given Hodge bundle.

In Section~\ref{comparison section}, we add the formulas from the two preceding sections. Proposition~\ref{direct comparison proposition} records the cancellations, and the Key lemma gives the two main theorems. Finally, we explain the case when $F=\Q$ in Remark \ref{F=Q remark}.

\subsection*{AI Declaration}
AI tools (GPT-6) were only used to assist with writing and verification of this paper. All the ideas, structure, and proof strategies were developed by the author. The author takes full responsibility for every statement in this paper.

\subsection*{Acknowledgement}
The author is deeply grateful for the valuable assistance and meticulous guidance provided by Professor Xinyi Yuan. He thanks Weixiao Lu, Jiacheng Xia, Yu Luo and Roy Zhao for helpful communication. He also thanks Chao Li, Yifeng Liu, Tonghai Yang and Wei Zhang for some useful suggestions. Finally, the author is grateful to the anonymous referee for many valuable comments and suggestions on this paper.

\section{Derivative series}\label{analytic section}

In this section, we define the derivative series $\Pr I'(0,g,\Phi)$ and express it as a sum of local terms in Theorem~\ref{derivative decomposition theorem}. Next, we choose $\Phi$ and compute those terms. The zero Whittaker coefficients give $c_{\Phi_v}$, while the nonzero coefficients give $k_{\Phi_v}$, which will be compared with the proper intersections in Section~\ref{local intersection subsection}.

\subsection{Notation and normalizations}\label{notation subsection}

This subsection fixes the lattice and local notation used throughout the paper, together with the measures and Weil representations of \cite[Section~2.1]{Guo1}.

The fields $E/F$ and the spaces $V,W$ are those of Section~\ref{CM height introduction}, with distinguished real place $\iota$. We write $d=[F:\Q]$, $F_{>0}$ for the totally positive elements of $F$, and $\Af$ for the finite adeles of $F$. The absolute discriminant of $F$ is $d_F$, and $d_{E/F}$ is the norm of the relative discriminant of $E/F$. We write $q(x)=\langle x,x\rangle$, with the Hermitian pairing linear in its first variable. The positive incoherent space in Section~\ref{ASW introduction} is
\begin{equation*}
    \BV=W^\perp(\A_E)\oplus\BW.
\end{equation*}
At finite places this agrees with $V_v=W_v^\perp\perp W_v$.

At a finite place $v$, let $\varpi_v$ be a uniformizer and recall that $N_v$ is the residue cardinality. We use $v(a)=\ord_v(a)$ and $|a|_v=N_v^{-v(a)}$, with $v(\varpi_v)=1$, and put $|a|_f=\prod_{v\nmid\infty}|a|_v$. At a ramified place of $E/F$, we choose $\varpi_{E_v}$ with $\overline{\varpi}_{E_v}=-\varpi_{E_v}$. At a split place, we identify $E_v=F_v\times F_v$; conjugation exchanges the two coordinates, and $\mathcal O_{E_v}=\mathcal O_{F_v}\times\mathcal O_{F_v}$. All split coordinates below use this identification.

We impose \cite[Assumption~2.4]{Guo2}. Thus $E$ is Galois over $\Q$ or is the compositum of $F$ with an imaginary quadratic field; every place above $2$ splits in $E/F$; and every place ramified over $\Q$ is unramified in $E/F$. The standard lattice $\Lambda$ and its stabilizer $U$ are those of \cite[Definition~2.2]{Guo1}. In particular, $\Lambda_v$ is self-dual at split and inert places. At a ramified place it is $\varpi_{E_v}$-modular for odd $n$ and almost $\varpi_{E_v}$-modular for even $n$.

We take lattice duals for the Hermitian pairing:
\begin{equation*}
    \Lambda_v^\vee=\{x\in V_v:\langle x,\Lambda_v\rangle\subset\mathcal O_{E_v}\}.
\end{equation*}
The $\varpi_{E_v}$-modular condition is $\Lambda_v^\vee=\varpi_{E_v}^{-1}\Lambda_v$. In the almost $\varpi_{E_v}$-modular case, $\Lambda_v^\vee\subset\varpi_{E_v}^{-1}\Lambda_v$ has colength one; see \cite[Section~4.4]{Qiu}.

For the embedding of $W$, we put
\begin{equation}\label{CM local lattices}
    M_v=W_v\cap\Lambda_v=\mathcal O_{E_v}w_v,\qquad
    \Lambda_v^\perp=W_v^\perp\cap\Lambda_v,\qquad r_v=v(q(w_v)).
\end{equation}
Here $M_v$ is a full $\mathcal O_{E_v}$-lattice in $W_v$, and $w_v$ is a generator. Since $M_v=W_v\cap\Lambda_v$, this generator is primitive in $\Lambda_v$; at a split place, the same holds in both coordinates. The integer in Theorem~\ref{CM height theorem} is
\begin{equation}\label{local conductor definition}
    c_v=\begin{cases}
        r_v,&v\text{ split or inert},\\
        \max\{r_v-1,0\},&v\text{ ramified}.
    \end{cases}
\end{equation}
The integers $c_v$ depend only on $W$ and $\Lambda$, and vanish at all but finitely many places. We also put
\begin{equation}\label{S0 definition}
    S_0(W)=\{v\nmid\infty:v\text{ ramified in }E/F,\ r_v=0\}.
\end{equation}
This set is empty for odd $n$. At its places, $M_v$ is self-dual and $\Lambda_v=M_v\perp\Lambda_v^\perp$, with $\Lambda_v^\perp$ $\varpi_{E_v}$-modular; see Lemma~\ref{all place line types}.

For the analytic normalizations, we follow \cite[Section~2.1]{Guo1}. The additive character at $v$ is the standard character of $\Q_p$ composed with $\Tr_{F_v/\Q_p}$, and $d_v$ is the different of $F_v/\Q_p$. We denote the relative discriminant of $E_v/F_v$ by $D_v$. With the self-dual measures of \cite[Section~1.6.2]{YZZ} and \cite[Section~2.1]{Guo1}, the volume of $\mathcal O_{F_v}$ is $|d_v|_v^{1/2}$. On a Hermitian space, the measure is self-dual for $q$ and $\psi_v$. At a real place, $\psi_v(b)=e^{2\pi i b}$ and the standard Gaussian is $e^{-2\pi q}$. We write $\psi=\prod_v\psi_v$.

We write $\Sch(T)$ for the Schwartz space of a Hermitian space $T$ and $r_T$ for its Weil representation, with the splitting characters of \cite[Section~2.1]{Guo1}. The upper triangular parabolic of $\UU$ is $\mathsf P$. We use the matrices
\begin{equation*}
    n(b)=\begin{pmatrix}1&b\\0&1\end{pmatrix},\qquad
    m(a)=\begin{pmatrix}a&0\\0&\bar a^{-1}\end{pmatrix},\qquad
    w=\begin{pmatrix}0&1\\-1&0\end{pmatrix},
    \qquad b\in F_v,\ a\in E_v^\times,
\end{equation*}
and the Iwasawa function
\begin{equation*}
    \delta_v(m(a))=|a|_{E_v}^{1/2},\qquad
    \delta(g)=\prod_v\delta_v(g_v),\qquad
    \delta_f(g_f)=\prod_{v\nmid\infty}\delta_v(g_v).
\end{equation*}
We omit the subscript of $r_T$ when the space is determined by its argument. A superscript $v$ omits the component at $v$.

The measures on the norm-one torus are those of \cite[Section~2.4]{Qiu}. For $[E^1]=E^1(F)\backslash E^1(\A)$, they give
\begin{equation*}
    \vol([E^1])=2L(1,\eta),\qquad
    \avint_{[E^1]}f(\tau)\,\dd\tau
       =\frac{1}{2L(1,\eta)}\int_{[E^1]}f(\tau)\,\dd\tau,
\end{equation*}
where $L(s,\eta)$ has the completed normalization of \cite[Section~3.1]{Guo1}. At a finite nonsplit place,
\begin{equation*}
    \vol(E_v^1)=\begin{cases}
        N_v^{-v(d_v)/2},&v\text{ inert},\\
        2N_v^{-1/2},&v\text{ ramified}.
    \end{cases}
\end{equation*}
The hypotheses on $E/F$ give $v(d_v)=0$ in the ramified case. We write $r(g,\tau)$ for the Weil action together with the action of $\tau$ on the one-dimensional factor. For a factorizable Schwartz function, we also write $\Phi_v^\perp=\Phi_v|_{W_v^\perp}$ and $\Phi^\perp=\otimes_v\Phi_v^\perp$.

\subsection{Mixed theta--Eisenstein series}\label{mixed subsection}

We use the construction of \cite[Section~7.1]{YZ} and \cite[Section~3.2]{Qiu}, with the normalization of \cite[Section~3.1]{Guo1}. Let $\Phi=\otimes_v\Phi_v$ be standard at infinity and equal to $1_{\Lambda_v}$ at almost every finite place. A partition into product cosets expresses each finite component as a finite sum of tensors relative to $W_v^\perp\perp W_v$. Note that the standard Gaussian at infinity already factors with respect to $W^\perp\oplus\BW$.

Define the mixed theta--Eisenstein series by
\begin{equation*}
    I(s,g,\Phi)=\sum_{\gamma\in\mathsf P(F)\backslash\UU(F)}\delta(\gamma g)^s\sum_{x\in W^\perp(E)}r_{\BV}(\gamma g)\Phi(x,0).
\end{equation*}
For $\Phi=\phi_1\otimes\phi_2$, this is $\theta(g,\phi_1)E_{\BW}(s,g,\phi_2)$, where
\begin{equation*}
    \theta(g,\phi_1)=\sum_{x\in W^\perp(E)}r_{W^\perp}(g)\phi_1(x),\qquad E_{\BW}(s,g,\phi_2)=\sum_{\gamma\in\mathsf P(F)\backslash\UU(F)}\delta(\gamma g)^sr_{\BW}(\gamma g)\phi_2(0).
\end{equation*}
Finite tensor decompositions give the meromorphic continuation of $I(s,g,\Phi)$ and its derivative $I'(0,g,\Phi)$. We also use
\begin{equation*}
    E_{\BV}(s,g,\Phi)=\sum_{\gamma\in\mathsf P(F)\backslash\UU(F)}\delta(\gamma g)^sr_{\BV}(\gamma g)\Phi(0)
\end{equation*}
and its positive Fourier part $E_{\BV,*}(0,g,\Phi)$, with the normalization of \cite{Guo1}. We also denote the sum over nonzero vectors in the definition of $I(s,g,\Phi)$ by
\begin{equation}\label{Omega definition}
    \Omega_\Phi(g)=\sum_{0\ne x\in W^\perp(E)}r_{\BV}(g)\Phi(x,0).
\end{equation}

For $a\in F_v$ and $\phi_{2,v}\in\Sch(\BW_v)$, let
\begin{equation*}
    W_{a,v}(s,g_v,\phi_{2,v})=\int_{F_v}\delta_v(wn(b)g_v)^s r_{\BW_v}(wn(b)g_v)\phi_{2,v}(0)\psi_v(-ab)\,\dd b.
\end{equation*}
For $a\ne0$, we put $W_{a,v}^\circ=\gamma_v^{-1}W_{a,v}$, where $\gamma_v$ is the Weil index of $\BW_v$. At a finite place, we normalize the coefficient at zero by
\begin{equation*}
    W_{0,v}^\circ(s,g_v,\phi_{2,v})=\gamma_v^{-1}\frac{L_v(s+1,\eta_v)}{L_v(s,\eta_v)}|D_vd_v|_v^{-1/2}W_{0,v}(s,g_v,\phi_{2,v}).
\end{equation*}
The ideals $D_v,d_v$ and their absolute values have the normalization of Section~\ref{notation subsection}. At infinity, we use the completed normalization of \cite[Section~3.1]{Guo1}.

Next, we define the local functions that occur in the derivative series. For a finite nonsplit place $v$ or a real place $v$, write $W(v)$ for the coherent Hermitian space of dimension one obtained by changing the invariant of $\BW$ at $v$, and put $V(v)=W^\perp\perp W(v)$. At every finite place, for a tensor $\Phi_v=\phi_{1,v}\otimes\phi_{2,v}$, we define
\begin{equation}\label{c definition}
    c_{\Phi_v}(g_v,y_1)=r_{W_v^\perp}(g_v)\phi_{1,v}(y_1)(W_{0,v}^\circ)'(0,g_v,\phi_{2,v})+\log\delta_v(g_v)r_{\BV_v}(g_v)\Phi_v(y_1,0).
\end{equation}
If $v$ is finite and nonsplit, and $y=y_1+y_2\in V(v)_v$ with $y_2\ne0$, we put
\begin{equation}\label{local k definition}
    k_{\Phi_v}(g_v,y)=\frac{L_v(1,\eta_v)}{\vol(E_v^1)}r_{W_v^\perp}(g_v)\phi_{1,v}(y_1)(W_{q(y_2),v}^\circ)'(0,g_v,\phi_{2,v}).
\end{equation}
These definitions extend by linearity to every $\Phi_v$. For $a=q(y_2)$ in the definition of $k_{\Phi_v}$, the local invariant gives
\begin{equation*}
    \eta_v\bigl(a/q(w_v)\bigr)=-1.
\end{equation*}

The finite local contribution is obtained from the sums
\begin{equation*}
    \begin{aligned}
    \mathcal K_\Phi^{(v)}(g,\tau)&=\sum_{y\in V(v)(E)\setminus W^\perp(E)}k_{r(\tau_v)\Phi_v}(g_v,y)r(g^v,\tau^v)\Phi^v(y),\\
    I'(0,g,\Phi)(v)&=2\avint_{[E^1]}\mathcal K_\Phi^{(v)}(g,\tau)\,\dd\tau.
    \end{aligned}
\end{equation*}

At an archimedean place $v$, the space $V(v)$ has signature $(n,1)$ at $v$. For $y=y_1+y_2$ with $q(y)$ totally positive and $y_2\ne0$, put
\begin{equation}\label{archimedean k}
    k_{v,u}(y)=\frac{\Gamma(n+u)}{2(4\pi)^u\Gamma(n)}\int_1^\infty\frac{\dd b}{b\left(1-\frac{q_v(y_2)}{q_v(y)}b\right)^{n+u}}.
\end{equation}
We now form the archimedean contribution to the derivative series. For a meromorphic function of $u$, we write $\CT_{u=0}$ for the coefficient of $u^0$ in its Laurent expansion at zero. For each $t\in F_{>0}$, the sum below has the meromorphic continuation of \cite[Section~3.3.7]{Qiu}, with the kernel normalization of \cite[Section~3.2]{Guo1}. Define
\begin{equation*}
    \begin{aligned}
    \overline{\mathcal K}_\Phi^{(v)}(g,\tau)&=\sum_{t\in F_{>0}}\CT_{u=0}\sum_{\substack{y\in V(v)(E)\setminus W^\perp(E)\\q(y)=t}}k_{v,u}(y)r(g,\tau)\Phi(y),\\
    \overline I'(0,g,\Phi)(v)&=2\avint_{[E^1]}\overline{\mathcal K}_\Phi^{(v)}(g,\tau)\,\dd\tau.
    \end{aligned}
\end{equation*}
At the real place $v$, $r(g,\tau)\Phi(y)$ denotes the holomorphic Whittaker factor determined by the standard Gaussian and $t$; at the other places it denotes the usual Weil action.

\subsubsection*{Holomorphic projection}

We work separately on the finitely many components for the central character at the fixed level, as in \cite[Section~7.1]{YZ}. The standard Gaussian has scalar weight $n+1$ on $\mathrm{SU}(1,1)$. The cuspidal holomorphic projection $\Pr I'$ is characterized by
\begin{equation*}
    \langle\Pr I',h\rangle_{\mathrm{Pet}}=\langle I',h\rangle_{\mathrm{Pet}}
\end{equation*}
for every holomorphic cusp form $h$ of the same weight and central character. The pairing converges by rapid decay of $h$.

To compute this projection, we recall the definition of the normalized quasi-holomorphic projection $\Pr'$ from \cite[Section~3.2]{Guo1}. Let $f$ be a smooth automorphic function of moderate growth with the fixed archimedean weight $\mathfrak m$ and central character. Its Whittaker coefficient at $t\in F_{>0}$ is
\begin{equation*}
    f_t(g)=\int_{F\backslash\A}f(n(b)g)\psi(-tb)\,\dd b.
\end{equation*}
Write $F_\infty=\prod_{v\mid\infty}F_v$, and let $\mathsf Z$ and $\mathsf N$ be the center and upper triangular unipotent subgroup of $\UU$. The holomorphic Whittaker function $W_t^{(\mathfrak m)}$ has this weight and central character, with normalization
\begin{equation*}
    \int_{\mathsf Z(F_v)\mathsf N(F_v)\backslash\UU(F_v)}
       |W_{t,v}^{(\mathfrak m)}(h)|^2\,\dd h
       =(4\pi)^{-n}\Gamma(n),\qquad v\mid\infty.
\end{equation*}
For $|t|_\infty=\prod_{v\mid\infty}t_v$, put
\begin{equation*}
    \begin{aligned}
    f_{t,s}(g)={}&\frac{(4\pi)^{dn}}{\Gamma(n)^d}|t|_\infty^s
      \left(\frac{\Gamma(n)(4\pi)^s}{\Gamma(n+s)}\right)^{d-1}
      W_t^{(\mathfrak m)}(g_\infty)\\
    &\quad\cdot\int_{\mathsf Z(F_\infty)\mathsf N(F_\infty)\backslash\UU(F_\infty)}
      \delta(h)^{2s}f_t(g_fh)\overline{W_t^{(\mathfrak m)}(h)}\,\dd h.
    \end{aligned}
\end{equation*}
When this family has a meromorphic continuation to $s=0$, define
\begin{equation}\label{quasi holomorphic projection definition}
    \Pr'(f)_t(g)=\CT_{s=0}f_{t,s}(g),\qquad
    \Pr'(f)(g)=\sum_{t\in F_{>0}}\Pr'(f)_t(g).
\end{equation}
This is the construction of \cite[Definition~6.3]{Liu2} with the normalization in \cite[Section~3.2]{Guo1}. The equality with $\Pr(f)$ under the required growth condition follows from \cite[Proposition~6.2]{Liu2}, in the unitary form of \cite[Lemma~3.3.1]{Qiu}.

In addition, we impose the following condition at two distinct finite split places $S_2$:
\begin{equation}\label{zero local section}
    r_{\BV_v}(g_v)\Phi_v(0)=0\qquad(v\in S_2,\ g_v\in\UU(F_v)).
\end{equation}
Section~\ref{split data subsection} constructs these functions explicitly. Every section of $E_{\BV}(s,g,\Phi)$ contains two zero factors, so the series, its derivative, and every series obtained by changing the datum at one place vanish identically.

\begin{proposition}\label{projection equality}
For every $n\ge1$ and a datum satisfying \eqref{zero local section},
\begin{equation*}
    \Pr I'(0,g,\Phi)=\Pr'I'(0,g,\Phi).
\end{equation*}
\end{proposition}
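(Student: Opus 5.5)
Our plan is to verify the growth hypothesis of \cite[Proposition~6.2]{Liu2}, in the unitary form of \cite[Lemma~3.3.1]{Qiu}, for $f=I'(0,g,\Phi)$. That result gives $\Pr(f)=\Pr'(f)$ once $f$ is of moderate growth with fixed weight and central character, and once its constant term along $\mathsf P$ has the shape
\begin{equation*}
    f_{\mathsf P}(g)=\sum_i c_i\,\delta(g)^{a_i}\log^{b_i}\delta(g)+O\bigl(\delta(g)^{-\epsilon}\bigr)
\end{equation*}
as $\delta(g)\to\infty$, with $0<\mathrm{Re}(a_i)<1$ (or, more weakly, with $a_i\ne1$ after removing terms already orthogonal to cusp forms). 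So the proof reduces to computing the constant term of $I'(0,g,\Phi)$ and showing that no term of type $\delta(g)$ or $\delta(g)\log\delta(g)$ survives. Such terms would obstruct the convergence of the regularized integral defining $f_{t,s}$ at $s=0$.

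\textbf{Step 1.} Reduce by linearity to a finite sum of tensors $\Phi=\phi_1\otimes\phi_2$, keeping the components at $S_2$ as they are. The constant term of $I(s,g,\Phi)=\theta(g,\phi_1)E_{\BW}(s,g,\phi_2)$ is computed by unfolding. The product's constant term is the integral over $F\backslash\A$ of the product of the two Fourier expansions. It equals
\begin{equation*}
    \sum_{x\in W^\perp(E)}r(g)\phi_1(x)\,E_{\BW,-q(x)}(s,g,\phi_2),
\end{equation*}
where $E_{\BW,a}$ is the $a$-th Whittaker coefficient of $E_{\BW}$.

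\textbf{Step 2.} Separate the terms $x\ne0$ from the term $x=0$.
\begin{itemize}
    \item For $x\ne0$, the coefficient $E_{\BW,-q(x)}$ is a product of local Whittaker functions, and the archimedean factor is a confluent hypergeometric function. Since $-q(x)$ is not totally positive while $\BW$ is positive, these terms decay exponentially in $\delta(g)$, as does their $s$-derivative at $0$.
    \item The term $x=0$ gives $r(g)\phi_1(0)$ times the constant term of $E_{\BW}$, namely
    \begin{equation*}
        \delta(g)^s r(g)\phi_2(0)+W_0(s,g,\phi_2).
    \end{equation*}
    Recombined over tensors, the first piece becomes $\delta(g)^s r_{\BV}(g)\Phi(0)$. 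The second becomes $\delta(g)^{-s}$ times an intertwining term whose value at $s=0$ is governed by $M(s)$ applied to the Siegel section of $\Phi$ for $\BV$.
\end{itemize}

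\textbf{Step 3.} Use \eqref{zero local section}. For $v\in S_2$, the local section $r_{\BV_v}(g_v)\Phi_v(0)$ vanishes identically, so the first piece and its $s$-derivative vanish. The second piece factors through the intertwining operator of the vanishing section at $v$, so it also vanishes. Since there are two such places, the derivative at $s=0$ still kills the $\delta(g)\log\delta(g)$ and $\delta(g)$ terms: the derivative can hit only one factor, and the other zero factor survives. Here $\delta^{\pm s}$ multiplies a weight factor $\delta(g)$ coming from the theta part $r(g)\phi_1(0)$ of weight $\dim W^\perp$, and that theta weight is what produces the $\delta^{1}$-type growth. Hence the constant term of $I'(0,g,\Phi)$ decays rapidly, and the growth condition holds trivially.

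\textbf{Main obstacle.} The delicate point is Step 3. We must check that the cross terms, in which the Weil action on $W^\perp$ is coupled to the Eisenstein part through the tensor recombination, are genuinely controlled by the $\BV$-section at $S_2$ rather than by the $\BW$-section alone. Concretely, a vanishing of $r_{\BV_v}(g_v)\Phi_v(0)$ for every $g_v$ must be shown to force the vanishing of the local intertwined term $r(g_v)\phi_1(0)W_{0,v}(s,g_v,\phi_2)$ after summing over the tensor decomposition. We would first try to prove this from the identity $W_{0,v}(s,g_v,\phi_2)=\int r(wn(b)g_v)\phi_2(0)\dots\,\dd b$. Combining it with $r(g_v)\phi_1(0)$ expresses the sum as an integral of $r_{\BV_v}(wn(b)g_v)\Phi_v(0,0)$, up to the $W^\perp$-Weil factor, which equals $r_{\BV_v}(\cdot)\Phi_v(0)$ and so vanishes.
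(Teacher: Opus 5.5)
\textbf{Gap in Step~3.} The failure is at the step you flag as the main obstacle. The term $r(g)\phi_1(0)W_0(s,g,\phi_2)$ is not controlled by the intertwining operator of the $\BV$-section. After summing over tensors, its local factor at $v$ is
\begin{equation*}
T_v(s,g_v)=\int_{F_v}\delta_v(wn(b)g_v)^s\,\bigl(r_{W_v^\perp}(g_v)\otimes r_{\BW_v}(wn(b)g_v)\bigr)\Phi_v(0,0)\,\dd b .
\end{equation*}
Here the Weil action on $W_v^\perp$ is at $g_v$, not at $wn(b)g_v$. Since $r_{W_v^\perp}(wn(b))\phi_{1,v}(0)$ is a Fourier-transform value and not a multiple of $\phi_{1,v}(0)$, the integrand cannot be rewritten as $r_{\BV_v}(wn(b)g_v)\Phi_v(0)$.

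In fact $T_v$ is not identically zero at $v\in S_2$. At $g_v=1$ it equals $W_{0,v}(s,1,\Phi_v(0,\cdot))$. Since $\Phi_v(0,\cdot)$ is supported on $q(z)\in\varpi_v^2\mathcal O_{F_v}^\times$, \eqref{Whittaker ball sum} gives
\begin{equation*}
\gamma_v^{-1}T_v(s,1)=(1-N_v^{-s})(1+N_v^{1-s}+N_v^{2-2s})\,N_v^{-2}(1-N_v^{-1})^2\Bigl(1-\frac{3}{1+N_v^n+N_v^{2n}}\Bigr),
\end{equation*}
which is nonzero for small $s\ne0$. So the claim that this piece ``factors through the intertwining operator of the vanishing section'' is false. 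Your two-place product-rule remark therefore presupposes a vanishing you have not established.

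\textbf{The repair.} What holds, and what suffices, is vanishing at $s=0$ only. The normalized constant term reproduces the section there: $W^\circ_{0,v}(0,g_v,\phi_{2,v})$ is a constant, independent of $\phi_{2,v}$, times $r_{\BW_v}(g_v)\phi_{2,v}(0)$. This is the localized form of $E_{\BW}(0,g,\phi_2)=0$, i.e.\ $W_0(0,g,\phi_2)=-r(g)\phi_2(0)$; compare the values in Proposition~\ref{zero Whittaker coefficients}. Summing over tensors, the normalized $T_v(0,g_v)$ is a multiple of $r_{\BV_v}(g_v)\Phi_v(0)$, which vanishes for both $v\in S_2$. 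The global normalizing factors are holomorphic at $s=0$, so the mixed term vanishes to order two. Its derivative is a sum of terms, each containing $\prod_{w\ne v}r_{\BV_w}(g_w)\Phi_w(0)$ for some $v$, i.e.\ a $\BV$-section changed at one place. This is exactly the paper's assertion that $\mathcal J'$ is built from such series and so vanishes.

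\textbf{Comparison with the paper.} With this repair your route is a legitimate variant. The paper subtracts $\mathcal J$ and quotes the growth bound $O(|a_v|_{E_v}^{(1-n)/2+\epsilon})$ for $I'-\mathcal J'$. You instead show that the constant term of $I'(0,g,\Phi)$ itself vanishes. Indeed, the $x\ne0$ terms drop out of the derivative: $-q(x)$ is negative at all $d\ge2$ real places, and there the Whittaker functions of $\BW$ vanish at $s=0$.

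\textbf{What remains.} You still need to explain why a vanishing constant term gives the required bound for the non-$\mathfrak z$-finite function $I'$. This follows from rapid decay of $I'-I'_{\mathsf P}$ on Siegel sets, which comes from uniform moderate growth. You also need to address the continuation of $f_{t,s}$ at $n=1$, which the paper treats separately. Finally, your exponent bookkeeping is off. The dangerous terms are $\delta^{n+1}$ and $\delta^{n+1}\log\delta$, coming from $\delta^s r_{\BV}(g)\Phi(0)$ and from $\delta^n\cdot\delta^{1-s}$, not $\delta$. The criterion you state with $0<\mathrm{Re}(a_i)<1$ is not the one in the cited sources. This is harmless only because you end with outright vanishing.
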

\begin{proof}
We first remove the Eisenstein term $\mathcal J$ formed from the absolute constant term in \cite[Section~3.2]{Guo1}. Its derivative is a linear combination of the Eisenstein series for $\BV$, its derivative, and series with a section changed at one place. Equation~\eqref{zero local section} therefore gives $\mathcal J'=0$.

The growth calculation of \cite[Lemma~6.13]{YZZ} and \cite[Lemma~3.3.3]{Qiu}, used in \cite[Section~3.2]{Guo1}, gives
\begin{equation*}
    (I'-\mathcal J')(m(a_v)g)
    =O_g\bigl(|a_v|_{E_v}^{(1-n)/2+\epsilon}\bigr).
\end{equation*}
For $0<\epsilon<1/2$, this satisfies the projection criterion for every $n\ge1$. The positive coefficient families and the archimedean kernel sums have the continuation specified in \cite[Section~3.2]{Guo1}; here $[F:\Q]>1$. At $n=1$, the coefficient integrals for $I'-\mathcal J'$ have a finite limit at zero by the same argument, so their singularity is removable. The definition \eqref{quasi holomorphic projection definition} therefore gives
\begin{equation*}
    \Pr I'=\Pr'(I'-\mathcal J')=\Pr'I'.
\end{equation*}
\end{proof}

To state the derivative formula, we write $H_j=\sum_{a=1}^j a^{-1}$, with $H_0=0$, and put
\begin{equation*}
    c_1=2\frac{L_f'(0,\eta)}{L_f(0,\eta)}+\log d_F+\log d_{E/F}.
\end{equation*}
The following formula uses the local terms defined above. Section~\ref{local Whittaker subsection} computes their values.

\begin{theorem}\label{derivative decomposition theorem}
For every $n\ge1$ and every Schwartz function satisfying the conditions of this subsection,
\begin{equation}\label{analytic formula}
    \begin{aligned}
    \Pr I'(0,g,\Phi)={}&-\sum_{v\mid\infty}\overline I'(0,g,\Phi)(v) -\sum_{\substack{v\nmid\infty\\v\ \mathrm{nonsplit}}}I'(0,g,\Phi)(v)\\
    &+(-c_1+dH_{n-1})\Omega_\Phi(g) -\sum_{v\nmid\infty}\sum_{0\ne x\in W^\perp(E)} c_{\Phi_v}(g_v,x)r(g^v)\Phi^v(x,0)\\
    &+\sum_{0\ne x\in W^\perp(E)} \bigl(2\log\delta_f(g_f)+\log|q(x)|_f\bigr)r(g)\Phi(x,0).
    \end{aligned}
\end{equation}
Only finitely many places contribute to the sum of the $c_{\Phi_v}$ terms.
\end{theorem}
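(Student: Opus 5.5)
By Proposition~\ref{projection equality}, it suffices to compute $\Pr'I'(0,g,\Phi)$ coefficient by coefficient. The plan is to expand $I'(0,g,\Phi)$ into Fourier--Whittaker terms, differentiate term by term at $s=0$, and then apply the quasi-holomorphic projection \eqref{quasi holomorphic projection definition} to each $t$-coefficient.

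By linearity we may take $\Phi=\phi_1\otimes\phi_2$. Then
\begin{equation*}
I(s,g,\Phi)=\theta(g,\phi_1)E_{\BW}(s,g,\phi_2),
\end{equation*}
and we expand
\begin{equation*}
E_{\BW}(s,g,\phi_2)=\delta(g)^sr(g)\phi_2(0)+\sum_{a\in F}W_a(s,g,\phi_2).
\end{equation*}
Because $\BW$ is incoherent, the constant section term and its derivative at $s=0$ combine with the zero Whittaker term. The derivative of $\delta^s$ produces the $\log\delta_v(g_v)$ terms in \eqref{c definition}. The zero Whittaker coefficient is a product of the local factors $W_{0,v}$. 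After the normalization $W_{0,v}^\circ$, its value at $s=0$ carries the global factor $L(s,\eta)/L(s+1,\eta)$ together with the discriminant factors, and its logarithmic derivative gives $-c_1$. The term $\log d_F+\log d_{E/F}$ comes from $|D_vd_v|_v^{-1/2}$, and the term $2L_f'(0,\eta)/L_f(0,\eta)$ comes from the functional equation relating $L(0,\eta)$ and $L(1,\eta)$. Differentiating the product of local factors place by place gives $-\sum_v c_{\Phi_v}$, which multiplies $r(g^v)\Phi^v(x,0)$ summed over $x\ne0$ in $W^\perp(E)$.

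The term $x=0$ in $W^\perp$ drops out. For $x=0$, the product $r\Phi(0,\cdot)$ is a product of Siegel sections for $\BV$, and these vanish by \eqref{zero local section}. For $x\neq0$, the local factors at the split places in $S_2$ are harmless, because the condition only kills the $x=0$ part. Finiteness in $v$ holds because $c_{\Phi_v}(g_v,x)=0$ whenever $\Phi_v=1_{\Lambda_v}$ is unramified, $g_v\in K_v$, and $x$ is $v$-integral. I would check this from the standard unramified computation of $W_{0,v}^\circ$, which equals $1$ identically in $s$ there.

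For $a\ne0$, the derivative of $\prod_vW_{a,v}$ at $s=0$ is supported at a single place. Incoherence forces some local factor to vanish at $s=0$, namely at the place $v$ where $a$ is not represented by $\BW_v$. This gives $\sum_v W_{a,v}'(0)\prod_{w\ne v}W_{a,w}(0)$. Away from $v$, the product $\prod_{w\ne v}W_{a,w}(0)$ is identified by local Siegel--Weil with a theta integral on the coherent space $W(v)$. Concretely, it equals $\frac{L(1,\eta)}{\vol}\int_{E^1}r(\tau)\phi_2(y_2)$ with $q(y_2)=a$. Averaging over $[E^1]$ and recombining with $\theta(g,\phi_1)$ gives a sum over $y\in V(v)(E)\setminus W^\perp(E)$. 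This yields $-I'(0,g,\Phi)(v)$ at finite nonsplit places, with the factor $2$ and the normalization in \eqref{local k definition}; split places contribute nothing, since $\BW_v$ represents everything there. At archimedean places, the raw derivative is not holomorphic. I would instead apply $\Pr'$ and use the computation of \cite[Section~3.2]{Guo1} and \cite[Section~3.3.7]{Qiu}. There the quasi-holomorphic projection of the archimedean Whittaker derivative gives the kernel $k_{v,u}$ of \eqref{archimedean k} with the constant term $\CT_{u=0}$, producing $-\overline I'(0,g,\Phi)(v)$.

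The main obstacle is the bookkeeping of the $\Pr'$ constants on the $\Omega_\Phi$ terms. Projecting the products of nonholomorphic archimedean factors in the $a=0$ and $x\ne0$ terms produces $dH_{n-1}$: each real place contributes $H_{n-1}$ through the derivative of $\Gamma(n+s)$-type factors in the definition of $f_{t,s}$. The same projection produces the term
\begin{equation*}
\bigl(2\log\delta_f(g_f)+\log|q(x)|_f\bigr)r(g)\Phi(x,0),
\end{equation*}
which arises from the factor $|t|_\infty^s\delta(h)^{2s}$ with $t=q(x)$, rewritten via the product formula $|q(x)|_\infty=|q(x)|_f^{-1}$. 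I would reproduce the single-embedding calculation of \cite[Theorem~3.?]{Guo1} verbatim. The point to verify is that it never used the special position of $W$: the archimedean data are unchanged, and $W$ enters only through the finite local factors, which are kept abstract here.
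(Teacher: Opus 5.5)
Your proposal follows the paper's proof: a tensor-by-tensor Fourier expansion of $\theta(g,\phi_1)E_{\BW}(s,g,\phi_2)$ as in \cite[Section~7.1]{YZ} and \cite[Section~3.2]{Qiu}, the single-place support of the derivative of $W_a$ for $a\ne0$ with local Siegel--Weil producing the theta sums on $V(v)$, and the archimedean projection formula of \cite[Theorem~3.3]{Guo1} combined with Proposition~\ref{projection equality}. Two small bookkeeping corrections are needed. First, the factors $|D_vd_v|_v^{-1/2}$ do not depend on $s$, so the discriminant part $\log d_F+\log d_{E/F}$ of $c_1$ comes from the functional equation of the completed $L(s,\eta)$, used to rewrite $L'(1,\eta)/L(1,\eta)$ in terms of $L'(0,\eta)/L(0,\eta)$. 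Second, the $x=0$, $a=0$ term drops out because it is the $t=0$ coefficient, which $\Pr'$ ignores, and not because $r(g)\phi_1(0)W_0(s,g,\phi_2)$ is a Siegel section for $\BV$; only the piece $\delta(g)^s r(g)\Phi(0)$ vanishes by \eqref{zero local section}.
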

\begin{proof}
We apply the Fourier decomposition of \cite[Section~7.1]{YZ} and \cite[Section~3.2]{Qiu} to each tensor, and then use the holomorphic projection formula in \cite[Theorem~3.3]{Guo1}. The same argument applies here, since $W^\perp$ is coherent and totally positive definite. Proposition~\ref{projection equality} then identifies the projection with $\Pr I'$.

More precisely, the archimedean computation in \cite[Theorem~3.3]{Guo1} gives the function $k_{v,u}$ in \eqref{archimedean k} and the harmonic term $dH_{n-1}$. Its discriminant term is $\log d_F+\log d_{E/F}$, which is included in $c_1$, and the product formula gives $\log|q(x)|_f$. These formulas hold for all $n\ge1$, with the convention that $H_0=0$.
\end{proof}

\subsection{Choice of the Schwartz function}\label{split data subsection}

We now choose $\Phi$ for the decomposition in Theorem~\ref{derivative decomposition theorem}. Following \cite[Section~7.2]{YZ}, we require \eqref{zero local section} and a nonzero theta series on $W^\perp$. Recall that $M_v=W_v\cap\Lambda_v$ and $\Lambda_v^\perp=W_v^\perp\cap\Lambda_v$, as in \eqref{CM local lattices}.

Throughout this paper, we choose two split places $S_2=\{v_1,v_2\}$ above distinct odd rational primes, unramified over $\Q$, such that $\psi_v$ has conductor $\mathcal O_{F_v}$ and $\Lambda_v=\Lambda_v^\perp\perp M_v$. Chebotarev gives infinitely many split places, and orthogonal projection onto $W$ is integral outside a finite set, so these choices are possible. For $v\in S_2$, put
\begin{equation}\label{degenerate phi}
    \Phi_v(y)=\left(1_{\varpi_v\Lambda_v}(y)-\frac{1_{\Lambda_v}(y)}{1+N_v^n+N_v^{2n}}\right)1_{\varpi_v^2\mathcal O_{F_v}^\times}(q(y)).
\end{equation}
This is a higher-dimensional generalization of the choice in \cite[Section~7.2]{YZ}. At $v\in S_0(W)$, take
\begin{equation}\label{unit test function}
    \Phi_v=1_{\Lambda_v^\vee}
    =1_{(\Lambda_v^\perp)^\vee}\otimes1_{M_v}.
\end{equation}
Here $M_v$ is self-dual and $(\Lambda_v^\perp)^\vee=\varpi_{E_v}^{-1}\Lambda_v^\perp$.
This choice allows the divisors in the height series to be described by lifting homomorphisms; the reason is given in Lemma~\ref{unit horizontal divisor}. At the remaining finite places take $\Phi_v=1_{\Lambda_v}$, and at infinity take the standard Gaussian.

We put $\Phi=\otimes_v\Phi_v$ and $\Phi_f^\perp=\otimes_{v\nmid\infty}\Phi_v^\perp$. Each finite component is $U_v$-invariant. Section~\ref{local line conductor subsection} gives its decomposition relative to $W_v^\perp\perp W_v$.

The measures fixed in Section~\ref{notation subsection} give the standard self-dual lattices volume one at $v\in S_2$, because $F_v/\Q_p$ is unramified. With these measures, we verify the two required properties of \eqref{degenerate phi}.

\begin{proposition}\label{two place proposition}
For $v\in S_2$ and $g_v\in\UU(F_v)$,
\begin{equation*}
    r_{\BV_v}(g_v)\Phi_v(0)=0.
\end{equation*}
Moreover,
\begin{equation*}
    \int_{W_v^\perp}\Phi_v^\perp(y_1)\,\dd y_1=N_v^{-2n}(1-N_v^{-1})(1-N_v^{-n})\left(1-\frac{1+N_v^{n-1}+N_v^{2n-2}}{1+N_v^n+N_v^{2n}}\right)>0.
\end{equation*}
Consequently the Eisenstein series for $\BV$ attached to $\Phi$ vanishes identically, and the same is true after changing the datum at any one place.
\end{proposition}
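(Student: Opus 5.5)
The plan is to reduce the vanishing of $r_{\BV_v}(g_v)\Phi_v(0)$ to the single identity $\int_{V_v}\Phi_v(y)\,\dd y=0$, and then check that identity, together with the integral over $W_v^\perp$, by counting points modulo $\varpi_v^k$.

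\emph{Step 1: reduction to one integral.} At $v\in S_2$ we have $E_v=F_v\times F_v$, so $\UU(F_v)\cong\mathrm{GL}_2(F_v)$. We write $K_v=\mathrm{GL}_2(\mathcal O_{F_v})$ and use the Iwasawa decomposition $\UU(F_v)=\mathsf P(F_v)K_v$. Under the Weil representation, $r(p)$ for $p\in\mathsf P(F_v)$ acts on the value at $0$ only through a character. It therefore suffices to show that $r(k)\Phi_v(0)=0$ for $k\in K_v$.

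Bruhat gives $K_v=\mathsf P(\mathcal O_{F_v})\cup\mathsf P(\mathcal O_{F_v})\,w\,\mathsf N(\mathcal O_{F_v})$, so we treat two cases.
\begin{itemize}
\item On $\mathsf P(\mathcal O_{F_v})$ we get a multiple of $\Phi_v(0)$. This vanishes because $q(0)=0\notin\varpi_v^2\mathcal O_{F_v}^\times$.
\item For $k=p\,w\,n(b)$ with $b\in\mathcal O_{F_v}$ we get a multiple of $\int_{V_v}\Phi_v(y)\psi_v(b\,q(y))\,\dd y$. On the support of $\Phi_v$ we have $q(y)\in\varpi_v^2\mathcal O_{F_v}$, and $\psi_v$ has conductor $\mathcal O_{F_v}$, so $\psi_v(bq(y))=1$. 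The integral is then just $\int_{V_v}\Phi_v$.
\end{itemize}

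\emph{Step 2: computing $\int_{V_v}\Phi_v$.} Put $m=n+1$ and $A(\Lambda)=\vol\{y\in\Lambda_v: v(q(y))=2\}$. By the substitution $y=\varpi_v y'$, we get $\vol\{y\in\varpi_v\Lambda_v:v(q(y))=2\}=N_v^{-2m}\vol\{y'\in\Lambda_v:q(y')\in\mathcal O_{F_v}^\times\}$. Here $\Lambda_v=L\times L^\vee$ has volume one, and $q$ is the split form $x\cdot y$ on $k^m\times k^m$.

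The unit set has volume $N_v^{-2m}$ times the number of pairs with $x\cdot y\ne0$ over $k$, namely $(N_v^m-1)N_v^{m-1}(N_v-1)$.

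Next we split $\{y\in\Lambda_v:v(q(y))=2\}$ into two parts. The imprimitive part is the set just computed. For the primitive part, the quadric $q=0$ is smooth at primitive vectors, so Hensel's lemma applies. It shows that the volume of primitive $y$ with $v(q(y))=2$ equals $N_v^{-2m}\cdot\#\{\text{nonzero }\bar y:q(\bar y)=0\}\cdot N_v^{-1}(1-N_v^{-1})$. The required count is $\#\{\bar y\neq0: x\cdot y=0\}=N_v^{2m-1}+N_v^m-N_v^{m-1}-1$.

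Substituting these, the identity $\int\Phi_v=0$ is equivalent to
\begin{equation*}
\frac{A(\Lambda)}{\vol\{y\in\varpi_v\Lambda_v:v(q(y))=2\}}=1+N_v^n+N_v^{2n}.
\end{equation*}
This is an elementary (if slightly tedious) polynomial identity in $N_v$. It is the main computational obstacle, and I would verify it first for $m=1,2$ as a sanity check.

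\emph{Step 3: the integral over $W_v^\perp$.} We have $\Lambda_v=\Lambda_v^\perp\perp M_v$, and $\Phi_v^\perp$ is the restriction to $W_v^\perp$. So the same computation applies to the split form of rank $n$ on $\Lambda_v^\perp$, with $m$ replaced by $n$. The $\varpi_v\Lambda_v^\perp$ term gives $N_v^{-2n}(1-N_v^{-1})(1-N_v^{-n})$. The $\Lambda_v^\perp$ term gives the same factor times $(1+N_v^{n-1}+N_v^{2n-2})$, and it is still divided by $1+N_v^n+N_v^{2n}$. This yields the displayed formula. Positivity follows from $1+N_v^{n-1}+N_v^{2n-2}<1+N_v^n+N_v^{2n}$.

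\emph{Step 4: the consequences.} For a factorizable $\Phi$, the section of $E_{\BV}(s,g,\Phi)$ is a product of local sections $\delta_v^s\,r(g_v)\Phi_v(0)$. The factors at $v_1$ and $v_2$ vanish identically. Changing the datum at any single place leaves at least one of these two vanishing factors, so those series vanish as well.
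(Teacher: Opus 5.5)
Your Step 2 count is correct: your formula factors as $N_v^{2m-1}+N_v^m-N_v^{m-1}-1=(N_v^{m-1}+1)(N_v^m-1)$, and the ratio $1+N_v^n+N_v^{2n}$ then follows at once. Steps 3 and 4 are also fine. The gap is in Step 1, where the decomposition $K_v=\mathsf P(\mathcal O_{F_v})\cup\mathsf P(\mathcal O_{F_v})\,w\,\mathsf N(\mathcal O_{F_v})$ is false. Every $p\,w\,n(b)$ with $p\in\mathsf P(\mathcal O_{F_v})$ has a unit lower-left entry, so you miss the Iwahori part of $K_v$, for instance $\begin{pmatrix}1&0\\ \varpi_v&1\end{pmatrix}$. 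Such an element $\begin{pmatrix}1&0\\ c&1\end{pmatrix}$, with $c\in\varpi_v\mathcal O_{F_v}\setminus\{0\}$, equals $p\,w\,n(1/c)$ with $p\in\mathsf P(F_v)$. It therefore produces $\int_{V_v}\Phi_v(y)\psi_v(bq(y))\,\dd y$ with $v(b)=-v(c)<0$. For $v(b)\ge-2$, your remark that $\psi_v(bq(y))=1$ on the support still applies. For $v(b)\le-3$, however, $\psi_v(b\,\cdot)$ is nontrivial on $\varpi_v^2\mathcal O_{F_v}^\times$, and the single identity $\int_{V_v}\Phi_v=0$ is not enough. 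So the Iwasawa reduction does not shrink the problem: you still need vanishing for every $b\in F_v$, exactly as the plain Bruhat decomposition $\UU(F_v)=\mathsf P(F_v)\cup\mathsf P(F_v)w\mathsf N(F_v)$ requires.

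The missing ingredient, which the paper proves, is that the two summands of \eqref{degenerate phi} have equal integrals over every class $\{q(y)\in a+\varpi_v^\ell\mathcal O_{F_v}\}$ with $v(a)=2$ and $\ell>2$. The paper shows the volume of this class in $\Lambda_v$ is $N_v^{-\ell}(1-N_v^{-n-1})\sum_{j=0}^{v(a)}N_v^{-jn}$. This depends only on $v(a)$ and $\ell$, so scaling by $\varpi_v$ gives the ratio $1+N_v^n+N_v^{2n}$ class by class. Your own Hensel argument yields this with little extra work. On the residue class of any primitive vector, $q$ is a submersion. Hence the pushforwards of $1_{\Lambda_v}\dd y$ and $1_{\varpi_v\Lambda_v}\dd y$ under $q$, restricted to $\varpi_v^2\mathcal O_{F_v}^\times$, are both constant multiples of Haar measure. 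For the imprimitive part, also note that the number of residue vectors with a given nonzero norm is independent of that norm, by scaling one split coordinate. Your total-volume ratio then holds on every congruence class, which gives $\int_{V_v}\Phi_v(y)\psi_v(bq(y))\,\dd y=0$ for all $b\in F_v$. The Bruhat decomposition together with $\Phi_v(0)=0$ finishes the proof, with no Iwasawa step needed.
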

\begin{proof}
The two assertions follow by counting the same norm congruences in $V_v$ and $W_v^\perp$. First consider $V_v$, with split coordinates $z,z'\in\mathcal O_{F_v}^{n+1}$ and $q(z,z')=z^tz'$.

For $a\ne0$ with $v(a)\ge0$ and an integer $\ell>v(a)$, the vectors $z$ whose coordinates have minimum valuation $j$ have volume $(1-N_v^{-n-1})N_v^{-j(n+1)}$. For such $z$, the condition $z^tz'\in a+\varpi_v^\ell\mathcal O_{F_v}$ has volume $N_v^{j-\ell}$ in $z'$ when $j\le v(a)$, and has no solution otherwise. Hence
\begin{equation*}
    \begin{aligned}
    &\vol\{(z,z')\in\mathcal O_{F_v}^{n+1}\oplus\mathcal O_{F_v}^{n+1}:
             z^tz'\in a+\varpi_v^\ell\mathcal O_{F_v}\}\\
    &\qquad=N_v^{-\ell}(1-N_v^{-n-1})
          \sum_{j=0}^{v(a)}N_v^{-jn}.
    \end{aligned}
\end{equation*}
When $v(a)=2$, division by $\varpi_v$ in both coordinates shows that the ratio of this volume on $\Lambda_v$ to its volume on $\varpi_v\Lambda_v$ is $1+N_v^n+N_v^{2n}$. Thus the two terms in \eqref{degenerate phi} have equal integrals over every sufficiently small norm congruence class. In particular,
\begin{equation*}
    r_{\BV_v}(wn(b))\Phi_v(0)
       =\int_{\BV_v}\Phi_v(y)\psi_v(bq(y))\,\dd y=0.
\end{equation*}
Since $\Phi_v(0)=0$, the Bruhat decomposition proves \eqref{zero local section}.

For the restriction to $W_v^\perp$, each split coordinate has $n$ entries. The same count gives volume $(1-N_v^{-1})(1-N_v^{-n})$ for vectors of unit norm, and the ratio at norm valuation two is $1+N_v^{n-1}+N_v^{2n-2}$. After scaling by $\varpi_v$, these values give the integral in the statement. Finally, changing the datum at one place leaves a zero Siegel section at the other place of $S_2$, so the asserted Eisenstein series vanish.
\end{proof}

We will also use the following non-vanishing result of the restricted theta series.

\begin{lemma}\label{nonzero rational theta coefficient}
For every sufficiently small real number $a>0$,
\begin{equation*}
    \sum_{x\in W^\perp(E)}\Phi_f(x,0)\exp\bigl(-2\pi a^2\Tr_{F/\Q}q(x)\bigr)>0.
\end{equation*}
\end{lemma}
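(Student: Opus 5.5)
The sum in the statement is $\sum_{x\in W^\perp(E)}\Phi_f^\perp(x)\exp(-2\pi a^2\Tr_{F/\Q}q(x))$, where $\Phi_f^\perp(x)=\Phi_f(x,0)$. This is a theta function on the totally positive definite Hermitian space $W^\perp$, viewed as a positive definite quadratic form $\Tr_{F/\Q}q$ on the rational vector space $W^\perp(E)$. The plan is to evaluate its behaviour as $a\to0^+$ by Poisson summation. The leading term will be a positive multiple of $\int\Phi_f^\perp$, which is nonzero by Proposition~\ref{two place proposition}.

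\textbf{Step 1: properties of $\Phi_f^\perp$.} Each finite component $\Phi_v^\perp$ is a Schwartz function on $W_v^\perp$: it is locally constant and compactly supported. Hence $\Phi_f^\perp$ is a finite linear combination of indicator functions of cosets $x_0+L$. Here $L\subset W^\perp(E)$ is a lattice and $x_0\in W^\perp(E)$, and $\Phi_f^\perp$ restricted to $W^\perp(E)$ is periodic modulo some lattice $L$ with bounded support. Note that $\Phi_f^\perp$ is real valued, although not nonnegative because of the factors \eqref{degenerate phi} at $S_2$.

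\textbf{Step 2: Poisson summation.} For one coset $x_0+L$, Poisson summation on the real vector space $W^\perp\otimes_\Q\mathbb R$ with the positive definite form $Q=\Tr_{F/\Q}q$ of rank $m=2n[F:\Q]$ gives
\begin{equation*}
\sum_{x\in x_0+L}e^{-2\pi a^2Q(x)}=\frac{1}{\vol(L)\,a^{m}\sqrt{\det Q}}\sum_{\lambda\in L^*}e^{2\pi i\langle\lambda,x_0\rangle}e^{-\pi^2 Q^*(\lambda)/(2\pi a^2)},
\end{equation*}
up to normalizing constants. Here $Q^*$ is the dual form and $L^*$ is the dual lattice. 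As $a\to0^+$, every term with $\lambda\ne0$ is $O(e^{-c/a^2})$. Summing over the cosets, the whole sum therefore equals
\begin{equation*}
C\,a^{-m}\int_{W^\perp(\A_{E,f})}\Phi_f^\perp(y)\,\dd y+O(e^{-c/a^2}),
\end{equation*}
with $C>0$. To see this, note that $\sum_{\text{cosets}}\vol(L)^{-1}$ weighted by the coefficients is a positive multiple of the adelic integral. This uses $\vol(W^\perp(E)\backslash W^\perp(\A_E))$ finite and the strong approximation of the finite adelic integral by lattice cosets. The measure normalization only affects the positive constant.

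\textbf{Step 3: positivity of the integral.} The integral factors as $\prod_{v\nmid\infty}\int_{W_v^\perp}\Phi_v^\perp$. At $v\in S_2$ each factor is positive by Proposition~\ref{two place proposition}. At $v\in S_0(W)$ it equals $\vol((\Lambda_v^\perp)^\vee)\cdot1_{M_v}(0)>0$. At all other places it equals $\vol(\Lambda_v^\perp)\cdot1>0$, since $1_{\Lambda_v}(x,0)=1_{\Lambda_v^\perp}(x)$. The product converges because almost all factors are $1$. Hence the leading term is $>0$ and dominates the error for small $a$, which proves the lemma.

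The main obstacle is Step 2. The difficulty is organizing the sign-indefinite $\Phi_f^\perp$, with its congruence conditions on $q$ at $S_2$, into finitely many lattice cosets, and identifying the coefficient of $a^{-m}$ with the adelic integral. I would handle this by choosing one lattice $L$ that is small enough for every $\Phi_v^\perp$ to be $L_v$-invariant, and then summing directly.
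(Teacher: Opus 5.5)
Your proposal is correct and follows the paper's own argument. Poisson summation shows that $a^{2nd}$ times the sum tends to a positive constant times $\int\Phi_f^\perp$, and this integral is positive: the factors at $S_2$ are positive by Proposition~\ref{two place proposition}, and the factors at $S_0(W)$ and at the standard places are volumes of lattices. Your Steps~1--2 spell out the reduction to lattice cosets and the identification of the leading coefficient with the adelic integral, which the paper leaves implicit.
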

\begin{proof}
The idea is to use Poisson summation to express the sum in terms of its Fourier transform. Note that the finite restrictions at the standard places and the places of \eqref{unit test function} have positive integral, so Proposition~\ref{two place proposition} gives $\int\Phi_f^\perp>0$. By Poisson summation, the sum multiplied by $a^{2nd}$ tends to a positive constant times this integral as $a\to0^+$. Since the integral is positive, the sum is also positive for sufficiently small $a$.
\end{proof}

From now on, $\Phi$ denotes this choice. We will compute at $g_f=1$ and choose a compact open subgroup $K_f$ of the standard maximal compact subgroup fixing $\Phi_f$, $\Phi_f^\perp$, and the finite Schwartz functions in the comparison. The same formulas then hold for $g_f\in K_f$.

\subsection{Local lattices and conductors}\label{local line conductor subsection}

To compute the functions $c_{\Phi_v}$ and $k_{\Phi_v}$ in Theorem~\ref{derivative decomposition theorem}, we express the lattice function as a sum of tensors on $W_v^\perp\perp W_v$. The same description identifies the quadratic order needed for the lifting lengths in Section~\ref{local intersection subsection}. This part mainly involves linear algebra.

Recall that $M_v=W_v\cap\Lambda_v=\mathcal O_{E_v}w_v$, $\Lambda_v^\perp=W_v^\perp\cap\Lambda_v$, and $r_v=v(q(w_v))$. Orthogonal projection onto $W_v$ is the map
\begin{equation*}
    \operatorname{pr}_{W_v}(y)=q(w_v)^{-1}\langle y,w_v\rangle w_v.
\end{equation*}
We first apply the local lattice classification in \cite[Section~2.1]{Guo3} to the primitive vector $w_v$.

\begin{lemma}\label{all place line types}
\begin{enumerate}[(1)]
\item At a split or inert place, $r_v\ge0$ determines the $U_v$-orbit of $M_v$, and $M_v$ lies in an orthogonal self-dual summand of rank two. At an inert place, the parity of $r_v$ determines the norm class of $W_v$.
\item At a ramified place with $r_v\ge1$, $M_v$ lies in an orthogonal $\varpi_{E_v}$-modular hyperbolic plane. Its norm class is determined by $q(w_v)$.
\item At a ramified place with $r_v=0$, $M_v$ is self-dual and $\Lambda_v=M_v\perp\Lambda_v^\perp$, where $\Lambda_v^\perp$ is $\varpi_{E_v}$-modular. This case occurs only for even $n$.
\end{enumerate}
Consequently, in these cases,
\begin{equation*}
    \operatorname{pr}_{W_v}\Lambda_v=
    \begin{cases}
        \varpi_v^{-r_v}M_v,&v\text{ split or inert},\\
        \varpi_{E_v}^{1-2r_v}M_v,&v\text{ ramified},\ r_v\ge1,\\
        M_v,&v\text{ ramified},\ r_v=0.
    \end{cases}
\end{equation*}
\end{lemma}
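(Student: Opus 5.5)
The plan is to reduce everything to the local structure of the primitive vector $w_v$ in $\Lambda_v$, using the Jordan-type classification of \cite[Section~2.1]{Guo3}. Since $w_v$ is primitive, there is $u\in\Lambda_v$ with $\langle u,w_v\rangle$ generating the ideal $\langle\Lambda_v,w_v\rangle$. That ideal is $\mathcal O_{E_v}$ when $\Lambda_v$ is self-dual, and $\varpi_{E_v}\mathcal O_{E_v}$ when $\Lambda_v$ is $\varpi_{E_v}$-modular or almost modular and $w_v$ is not orthogonal to the non-modular part. The argument then splits according to the split/inert case and the ramified case.

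\emph{Split or inert places.} Here $\Lambda_v$ is self-dual, so $\langle\Lambda_v,w_v\rangle=\mathcal O_{E_v}$, and we choose $u$ with $\langle u,w_v\rangle=1$.
\begin{enumerate}[(i)]
\item If $r_v=0$, then $M_v$ is itself self-dual, hence an orthogonal summand. Any complement of rank one gives the desired self-dual rank-two summand.
\item If $r_v\ge1$, the span of $w_v$ and $u$ has Gram matrix with off-diagonal unit and $q(w_v)\in\varpi_v\mathcal O_{F_v}$. Its determinant is therefore a unit, so the span is self-dual and splits off orthogonally.
\end{enumerate}
In this rank-two self-dual plane, the $U_v$-orbit of a primitive vector is determined by its norm up to units. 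This follows from Witt's theorem for unimodular hermitian lattices (Jacobowitz), applied to primitive vectors of equal norm. The valuation $r_v$ fixes the norm modulo $\mathrm{Nm}(\mathcal O_{E_v}^\times)=\mathcal O_{F_v}^\times$ at unramified places. At an inert place, $\eta_v(q(w_v))=(-1)^{r_v}$ then gives the parity statement. For the projection, $\operatorname{pr}_{W_v}\Lambda_v=q(w_v)^{-1}\langle\Lambda_v,w_v\rangle w_v=\varpi_v^{-r_v}M_v$.

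\emph{Ramified places.} The Hermitian form on a $\varpi_{E_v}$-modular lattice takes values such that $q(x)\in\mathcal O_{F_v}$. Rescaling by $\varpi_{E_v}^{-1}$ gives a skew-Hermitian unimodular lattice, and the classification becomes Jacobowitz's: a sum of hyperbolic planes, plus a rank-one self-dual piece in the almost-modular case (even $n$).
\begin{enumerate}[(i)]
\item Suppose $\langle\Lambda_v,w_v\rangle=\varpi_{E_v}\mathcal O_{E_v}$. Pick $u$ with $\langle u,w_v\rangle=\varpi_{E_v}$. Since $q(w_v)\in F_v$ and the norm valuation of $\varpi_{E_v}$ is one, $r_v\ge1$ holds automatically. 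The span of $w_v$ and $u$ is then $\varpi_{E_v}$-modular, hence a hyperbolic plane, and splits off orthogonally. The norm-class statement follows as before.
\item Otherwise $w_v$ pairs into $\mathcal O_{E_v}$-units with $\Lambda_v$. This can only come from the almost-modular rank-one self-dual part, and it forces $M_v$ to be self-dual, so $r_v=0$ and $n$ is even. One also has to show that $r_v\ge1$ excludes this case. If $w_v$ had unit pairing but $q(w_v)\in\varpi_v\mathcal O_{F_v}$, one constructs the corresponding splitting and reaches a contradiction with the colength-one description of $\Lambda_v^\vee$ in \cite[Section~4.4]{Qiu}.
\end{enumerate}
In case (ii), $M_v$ being self-dual gives $\Lambda_v=M_v\perp\Lambda_v^\perp$. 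Comparing duals with the colength-one condition shows that $\Lambda_v^\perp$ is $\varpi_{E_v}$-modular. The projection formulas follow as in the unramified case: $\varpi_{E_v}/q(w_v)$ has $E$-valuation $1-2r_v$ in case (i), and the projection is $M_v$ in case (ii).

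The main obstacle is the ramified dichotomy. One must verify that a primitive vector with $r_v\ge1$ always pairs into $\varpi_{E_v}$ with all of $\Lambda_v$, and never with a unit, including in the almost-modular case. The first attempt will be to use the explicit basis of the almost $\varpi_{E_v}$-modular lattice and check this directly. Orbit uniqueness within each case is quoted from \cite{Guo3}.
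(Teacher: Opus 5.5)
Your argument is correct in substance, but it takes a different and more self-contained route than the paper. The paper's proof is essentially a citation. It applies the classification of primitive vectors in the standard lattices from \cite[Section~2.1]{Guo3}, noting that the parameters $s,t$ there vanish at split places and that surjectivity of the norm on units removes the unit part of $q(w_v)$. It then reads $\operatorname{pr}_{W_v}\Lambda_v$ off from the pairing ideal $\langle\Lambda_v,w_v\rangle$, exactly as you do. You instead rebuild the classification. You split off the plane spanned by $w_v$ and a vector $u$ realizing the pairing ideal, use Jacobowitz for hyperbolicity and uniqueness of complements, and get the ramified dichotomy from the colength-one condition. Your route shows why the pairing ideal controls everything and why $r_v=0$ at a ramified place forces the almost-modular case. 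The cited route is shorter and also supplies the normal forms used later in \eqref{adapted plane bases}.

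Three points need tightening.

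\emph{Choice of $u$ in ramified case (i).} When $\Lambda_v$ is almost $\varpi_{E_v}$-modular, an arbitrary $u$ with $\langle u,w_v\rangle=\varpi_{E_v}$ does not work. Write $\Lambda_v=\Lambda_0\perp\mathcal O_{E_v}e_0$ with $\Lambda_0$ $\varpi_{E_v}$-modular and $q(e_0)$ a unit. If $w_v\in\Lambda_0$ and $u_0\in\Lambda_0$ has $\langle u_0,w_v\rangle=\varpi_{E_v}$, then $u=u_0+e_0$ pairs the same way but has unit norm. Its span with $w_v$ is then not $\varpi_{E_v}$-modular. Moreover $\langle\Lambda_v,u\rangle=\mathcal O_{E_v}$, so your splitting argument does not apply. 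Take $u\in\varpi_{E_v}\Lambda_v^\vee$ instead. This is possible because primitivity gives $\langle\Lambda_v^\vee,w_v\rangle=\mathcal O_{E_v}$. Then $\langle\Lambda_v,u\rangle\subset\varpi_{E_v}\mathcal O_{E_v}$ and $q(u)\in\varpi_v\mathcal O_{F_v}$, which give both the modularity and the orthogonal splitting.

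\emph{The opening criterion.} Your description of when the ideal is $\varpi_{E_v}\mathcal O_{E_v}$ in the almost-modular case is not right as stated. The ideal is $\mathcal O_{E_v}$ exactly when the $e_0$-component $\beta$ of $w_v=w_0+\beta e_0$ is a unit. With this decomposition the step you flag as the main obstacle takes one line:
\begin{equation*}
q(w_v)=q(w_0)+\mathrm{N}(\beta)q(e_0),\qquad q(w_0)\in\varpi_v\mathcal O_{F_v}.
\end{equation*}
Here $q(w_0)\in\varpi_v\mathcal O_{F_v}$ because norms on a $\varpi_{E_v}$-modular lattice lie in $\varpi_v\mathcal O_{F_v}$, not merely in $\mathcal O_{F_v}$. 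Your colength argument also works: a unimodular plane would force $\varpi_{E_v}^{-1}\Lambda_v/\Lambda_v^\vee$ to have length at least two.

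\emph{Orbits.} To pass from orbits inside the plane to $U_v$-orbits in $\Lambda_v$, say explicitly why the complement does not matter. The orthogonal complement of the plane, or of $M_v$ when $r_v=0$, is unimodular of fixed rank. At unramified places such a lattice is unique up to isometry. This is cleaner than appealing to a general lattice Witt theorem.
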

\begin{proof}
We apply \cite[Lemmas~2.1, 2.3, and~2.4, Corollaries~2.2 and~2.5]{Guo3} to $w_v$. At a split place, neither idempotent component of $w_v$ is divisible by $\varpi_v$ in the corresponding lattice. Thus the integers $s,t$ in \cite[Lemma~2.1]{Guo3} are both zero. At split and inert places, the norm map on units is surjective; a change of generator therefore removes the unit part of $q(w_v)$. This gives the stated orbit classification and orthogonal summands.

For the projection formula, the pairing ideal $\langle\Lambda_v,w_v\rangle$ is $\mathcal O_{E_v}$ at split and inert places, $\varpi_{E_v}\mathcal O_{E_v}$ in the ramified case $r_v\ge1$, and $\mathcal O_{E_v}$ in the ramified case $r_v=0$. Substitution in the definition of $\operatorname{pr}_{W_v}$ gives the formula for $\operatorname{pr}_{W_v}\Lambda_v$.
\end{proof}

Now, we choose bases for the summands in Lemma~\ref{all place line types}. At split and inert places, and at ramified places with $r_v\ge1$, we denote the summand of rank two by $\Lambda_{1,v}$ and its orthogonal complement in $\Lambda_v$ by $\Lambda_{1,v}^\perp$. In the bases of \cite[Section~2.1]{Guo3}, we can write
\begin{equation}\label{adapted plane bases}
    \begin{aligned}
        w_v&=e+\frac{q(w_v)}2f,&
        t_v&=e-\frac{q(w_v)}2f,
        &&v\text{ inert},\\[1mm]
        w_v&=e-\frac{q(w_v)}{2\varpi_{E_v}}f,&
        t_v&=e+\frac{q(w_v)}{2\varpi_{E_v}}f,
        &&v\text{ ramified},\ r_v\ge1.
    \end{aligned}
\end{equation}
Here $e,f$ are hyperbolic bases with $\langle e,f\rangle=1$ and $\varpi_{E_v}$, respectively. At a split place, we use the idempotent coordinates
\begin{equation*}
    w_v=\bigl((1,0),(q(w_v),1)\bigr),\qquad
    t_v=\bigl((1,-q(w_v)),(0,1)\bigr).
\end{equation*}
In all three cases, $q(t_v)=-q(w_v)$ and $\langle w_v,t_v\rangle=0$.

The correspondence in \cite[Propositions~3.3 and~3.6]{Guo2} associates a maximal quaternionic order with $\Lambda_{1,v}$. The action determined by $W_v$ embeds the quadratic algebra $E_v$ in that quaternion algebra. We will use its intersection with the maximal order. The following calculation identifies this quadratic order and gives matrices for its action. These matrices are also used in Lemma~\ref{local product stabilization} to identify the lattice in the rational Tate module of the CM formal module, and hence its lifting problem.

\begin{proposition}\label{local CM order}
The quadratic order associated with $W_v\subset E_v\Lambda_{1,v}$ is
\begin{equation*}
    \mathcal O_{F_v}+\varpi_v^{c_v}\mathcal O_{E_v},\qquad
    c_v=\begin{cases}
        r_v,&v\text{ split or inert},\\
        r_v-1,&v\text{ ramified},\ r_v\ge1.
    \end{cases}
\end{equation*}
At a ramified place with $r_v=0$, the factor $M_v$ has the full action of $\mathcal O_{E_v}$, so $c_v=0$.
\end{proposition}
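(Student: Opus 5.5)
The plan is to identify the quaternionic order with the endomorphism ring of the rank-two summand $\Lambda_{1,v}$ and then compute the stabilizer of $\Lambda_{1,v}$ inside the image of $E_v$. Concretely, the correspondence of \cite[Propositions~3.3 and~3.6]{Guo2} identifies the maximal order attached to $\Lambda_{1,v}$ with the endomorphisms of $\Lambda_{1,v}$ compatible with the hermitian structure. The embedding of $E_v$ is the $F_v$-algebra map sending $\alpha\in E_v$ to the endomorphism that acts by $\alpha$ on $W_v$ and by $\bar\alpha$ (or $\alpha$, depending on the normalization in \cite{Guo2}) on the line $E_vt_v=W_v^\perp\cap E_v\Lambda_{1,v}$. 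So the order in question is
\begin{equation*}
    \{\alpha\in E_v:\ \alpha\cdot\Lambda_{1,v}\subset\Lambda_{1,v}\}.
\end{equation*}
Since $\mathcal O_{F_v}$ acts by scalars, it suffices to find for which $\beta$ the element $\beta\,\cdot$ preserves $\Lambda_{1,v}$, where $\beta$ runs over a generator of $\mathcal O_{E_v}$ over $\mathcal O_{F_v}$. Subtracting a suitable scalar, this is the same as asking when the operator $\theta=$ (multiplication by $\alpha-\bar\alpha$ on $w_v$, and $0$ on $t_v$), or equivalently the difference of the two projections scaled by $\alpha-\bar\alpha$, is integral.

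The key computation uses the bases \eqref{adapted plane bases}. First express $e$ and $f$ in terms of $w_v$ and $t_v$. In the inert case, $e=(w_v+t_v)/2$ and $f=(w_v-t_v)/q(w_v)$. Applying the operator $x\mapsto\beta x$ on $w_v$ and $\bar\beta x$ on $t_v$, the images of $e,f$ are integral combinations of $e,f$ exactly when $(\beta-\bar\beta)/q(w_v)\in\mathcal O_{E_v}$ up to units, because the $f$-coefficient of the image of $f$ involves $(\beta-\bar\beta)/2$ and the $e$-coefficient of the image of $f$ involves $(\beta-\bar\beta)/q(w_v)$. With $\beta=\varpi_v^k\delta$, where $\mathcal O_{E_v}=\mathcal O_{F_v}[\delta]$, this forces $k\ge r_v$, giving $\mathcal O_{F_v}+\varpi_v^{r_v}\mathcal O_{E_v}$. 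The split case runs the same way in the idempotent coordinates: $\beta=(1,0)$ acts diagonally, and the off-diagonal entry in the basis given by the coordinate vectors is $q(w_v)^{-1}$ times the difference of the eigenvalues. In the ramified case take $\beta=\varpi_{E_v}^{2k+1}$. Now $f$ is expressed with denominator $q(w_v)/\varpi_{E_v}$, whose valuation in $E_v$ is $2r_v-1$, while $\beta-\bar\beta=2\beta$ has $E_v$-valuation $2k+1$. Integrality then requires $2k+1\ge2r_v-1$, i.e.\ $k\ge r_v-1$. This gives the conductor $c_v=r_v-1$. As a consistency check, Lemma~\ref{all place line types}(2) says $\operatorname{pr}_{W_v}\Lambda_v=\varpi_{E_v}^{1-2r_v}M_v$, which matches: an element preserves $\Lambda_{1,v}$ iff $(\alpha-\bar\alpha)\operatorname{pr}_{W_v}$ maps $\Lambda_{1,v}$ into $M_v$. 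I would in fact present the proof uniformly this way. The order is $\{\alpha:(\alpha-\bar\alpha)\operatorname{pr}_{W_v}\Lambda_{1,v}\subset M_v\}$, and the projection formula of Lemma~\ref{all place line types} gives the answer in all three cases at once. The restriction to $\Lambda_{1,v}$ is harmless because $\Lambda_{1,v}$ is an orthogonal summand containing $M_v$, so $\operatorname{pr}_{W_v}\Lambda_{1,v}=\operatorname{pr}_{W_v}\Lambda_v$.

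For the case $r_v=0$ at a ramified place, Lemma~\ref{all place line types}(3) gives $\Lambda_v=M_v\perp\Lambda_v^\perp$. Here $M_v=\mathcal O_{E_v}w_v$ is visibly stable under all of $\mathcal O_{E_v}$, so $c_v=0$.

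The main obstacle is pinning down the precise normalization of the embedding $E_v\hookrightarrow$ quaternion algebra in \cite{Guo2}. This includes whether the action on $t_v$ is by $\alpha$ or $\bar\alpha$, and whether the order is the stabilizer of $\Lambda_{1,v}$ or of its dual/associated lattice in the ramified $\varpi_{E_v}$-modular case. I would first check that the maximal order in \cite[Proposition~3.6]{Guo2} is exactly $\mathrm{End}(\Lambda_{1,v})$ for the $\varpi_{E_v}$-modular plane, so that the stabilizer computation above applies verbatim.
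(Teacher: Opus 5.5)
Your proposal is correct and is essentially the paper's proof. Letting $\alpha$ act by $\operatorname{diag}(\alpha,\bar\alpha)$ in the basis $w_v,t_v$ and rewriting in the hyperbolic basis $e,f$ reproduces exactly the paper's ramified matrix for $A+B\varpi_{E_v}$, and its inert matrix after conjugating by the unit $\operatorname{diag}(\delta,1)$; entrywise integrality then gives the stated conductors, and your uniform criterion via $\operatorname{pr}_{W_v}\Lambda_v$ is a clean repackaging of the same computation.

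The obstacles you flag are not real ones. Acting by $\alpha$ on $t_v$ would be scalar multiplication, which is central and so cannot be the embedding into the quaternion algebra, so $\bar\alpha$ is forced. The dual-lattice worry is moot because $\Lambda_{1,v}^\vee=\varpi_{E_v}^{-1}\Lambda_{1,v}$ has the same stabilizer. In the uniform criterion you should also require $\alpha\in\mathcal O_{E_v}$; this is automatic, since the operator preserves $M_v=\Lambda_{1,v}\cap W_v$, but without it every element of $F_v$ would satisfy the criterion.
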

\begin{proof}
The key point is to compute the integral elements of the quadratic algebra in the matrix realization of \cite[Propositions~3.3 and~3.6]{Guo2}. At an inert place, we choose $\delta\in\mathcal O_{E_v}^{\times}$ with $\bar\delta=-\delta$ and $\mathcal O_{E_v}=\mathcal O_{F_v}[\delta]$. For $A,B\in F_v$, the element $A+B\delta$ acts by
\begin{equation*}
    \begin{pmatrix}A&2B/q(w_v)\\q(w_v)\delta^2B/2&A\end{pmatrix}.
\end{equation*}
It is integral precisely when $A\in\mathcal O_{F_v}$ and $B\in\varpi_v^{r_v}\mathcal O_{F_v}$. At a split place, the two idempotent factors give
\begin{equation*}
    \{(A,B)\in\mathcal O_{F_v}^2:A-B\in\varpi_v^{r_v}\mathcal O_{F_v}\}.
\end{equation*}
This description also applies at places above $2$.

At a ramified place with $r_v\ge1$, the element $A+B\varpi_{E_v}$ acts by
\begin{equation*}
    \begin{pmatrix}A&-2B\varpi_{E_v}^2/q(w_v)\\-q(w_v)B/2&A\end{pmatrix}.
\end{equation*}
Since $v(\varpi_{E_v}^2)=1$, this matrix is integral precisely when $A\in\mathcal O_{F_v}$ and $B\in\varpi_v^{r_v-1}\mathcal O_{F_v}$. For $r_v=0$, the orthogonal decomposition in Lemma~\ref{all place line types} gives the full integral action on $M_v$.
\end{proof}

We next give the tensor decomposition used in the Whittaker calculations. Orthogonal projection identifies
\begin{equation*}
    \Lambda_v/(M_v\oplus\Lambda_v^\perp)
       \simeq\operatorname{pr}_{W_v}\Lambda_v/M_v.
\end{equation*}
For each class on the right, we choose $x_\alpha+z_\alpha\in\Lambda_v$, with $x_\alpha\in W_v^\perp$ and $z_\alpha\in W_v$. In the summand of rank two, we take $z_\alpha=\alpha w_v$ and $x_\alpha=-\alpha t_v$, where
\begin{equation}\label{coset representatives}
    \alpha\in\begin{cases}
        q(w_v)^{-1}\mathcal O_{E_v}/\mathcal O_{E_v},&v\text{ split or inert},\\
        \varpi_{E_v}q(w_v)^{-1}\mathcal O_{E_v}/\mathcal O_{E_v},&v\text{ ramified},\ r_v\ge1.
    \end{cases}
\end{equation}
For a nonzero class $\alpha$, let $k\ge1$ be the smallest integer such that $\varpi_v^k\alpha=0$ in the quotient. At a ramified place, we use $\varpi_{E_v}$ instead of $\varpi_v$. At split places this convention is applied separately to the two coordinates.

\begin{proposition}\label{lattice coset decomposition}
The lattice function has the decomposition
\begin{equation}\label{all place coset decomposition}
    1_{\Lambda_v}(x+z)=\sum_{\alpha\in\operatorname{pr}_{W_v}\Lambda_v/M_v}
       1_{x_\alpha+\Lambda_v^\perp}(x)1_{z_\alpha+M_v}(z).
\end{equation}
On $\Lambda_{1,v}$ this is
\begin{equation}\label{explicit plane cosets}
    1_{\Lambda_{1,v}}(x+z)=\sum_\alpha
       1_{-\alpha t_v+\mathcal O_{E_v}t_v}(x)
       1_{\alpha w_v+\mathcal O_{E_v}w_v}(z),
\end{equation}
with $\alpha$ as in \eqref{coset representatives}. The classes have the following descriptions.
\begin{enumerate}[(1)]
\item At an inert place, there are $N_v^{2r_v}$ classes. For $1\le k\le r_v$, the $N_v^{2k}-N_v^{2k-2}$ classes of denominator $\varpi_v^k$ have norm valuation $r_v-2k$.
\item At a ramified place with $r_v\ge1$, there are $N_v^{2r_v-1}$ classes. For $1\le k\le2r_v-1$, the $N_v^k-N_v^{k-1}$ classes of denominator $\varpi_{E_v}^k$ have norm valuation $r_v-k$. For $r_v=0$, the decomposition has one term.
\item At a split place, the quotient is $(q(w_v)^{-1}\mathcal O_{F_v}/\mathcal O_{F_v})^2$, of order $N_v^{2r_v}$. In either coordinate there are $N_v^k-N_v^{k-1}$ classes of denominator $\varpi_v^k$. Thus there are that many classes with this coordinate nonzero and the other zero, and
\begin{equation*}
    (N_v^{k_1}-N_v^{k_1-1})(N_v^{k_2}-N_v^{k_2-1})
\end{equation*}
classes with both coordinates nonzero of denominators $\varpi_v^{k_1}$ and $\varpi_v^{k_2}$.
\end{enumerate}
\end{proposition}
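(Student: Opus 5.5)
The decomposition \eqref{all place coset decomposition} is a general lattice fact. Let $\Lambda_v\to\operatorname{pr}_{W_v}\Lambda_v$ be the map $x+z\mapsto z$. Its kernel is $\Lambda_v\cap W_v^\perp=\Lambda_v^\perp$. Also $M_v=\Lambda_v\cap W_v$ is contained in $\Lambda_v$. Hence a vector $x+z$ lies in $\Lambda_v$ exactly when $z\in\operatorname{pr}_{W_v}\Lambda_v$ and, writing $\alpha$ for the class of $z$ modulo $M_v$, the vector $(x-x_\alpha)+(z-z_\alpha)$ lies in $\Lambda_v$. We have $z-z_\alpha\in M_v\subset\Lambda_v$, so this forces $x-x_\alpha\in\Lambda_v\cap W_v^\perp=\Lambda_v^\perp$. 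The converse is immediate. The cosets $z_\alpha+M_v$ are disjoint, so exactly one term of the sum is nonzero, and this gives \eqref{all place coset decomposition}.

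For \eqref{explicit plane cosets}, we have $\Lambda_v=\Lambda_{1,v}\perp\Lambda_{1,v}^\perp$ with $\Lambda_{1,v}^\perp\subset W_v^\perp$ by Lemma~\ref{all place line types}, so we may restrict to the plane. From \eqref{adapted plane bases} one reads off that $\Lambda_{1,v}=\mathcal O_{E_v}e\oplus\mathcal O_{E_v}f$, and that $w_v+t_v=2e$ in the inert and ramified cases and $w_v+t_v=((1,0),(0,1))\cdot$unit-basis in the split case. Since every place above $2$ splits, the factor $2$ is a unit. Thus $\alpha w_v-\alpha t_v$ is an integral combination of $e$ and $f$ whenever $\alpha$ is as in \eqref{coset representatives}. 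Concretely, in the inert case $\alpha(w_v-t_v)=\alpha q(w_v)f$, which is integral iff $\alpha\in q(w_v)^{-1}\mathcal O_{E_v}$. The ramified and split cases are the same computation, and the range of $\alpha$ agrees with $\operatorname{pr}_{W_v}\Lambda_v$ from Lemma~\ref{all place line types}. The plane part of $\Lambda_v^\perp$ is $\mathcal O_{E_v}t_v$, because $t_v$ is primitive and orthogonal to $w_v$. This yields \eqref{explicit plane cosets}.

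The counting statements come from the structure of the quotient. In the inert case the quotient is $\varpi_v^{-r_v}\mathcal O_{E_v}/\mathcal O_{E_v}$. Its elements of exact denominator $\varpi_v^k$ number $N_v^{2k}-N_v^{2k-2}$, since the residue field of $E_v$ has order $N_v^2$. For such $\alpha$ one has $q(z_\alpha)=\alpha\bar\alpha q(w_v)$, which has valuation $r_v-2k$. In the ramified case the quotient is $\varpi_{E_v}^{1-2r_v}\mathcal O_{E_v}/\mathcal O_{E_v}$. It has $N_v^{2r_v-1}$ elements, and $N_v^k-N_v^{k-1}$ of them have exact denominator $\varpi_{E_v}^k$. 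The norm valuation is $v(\alpha\bar\alpha)+r_v=r_v-k$, because $v(\varpi_{E_v}\bar\varpi_{E_v})=1$. When $r_v=0$, Lemma~\ref{all place line types}(3) gives $\operatorname{pr}_{W_v}\Lambda_v=M_v$, so the sum has one term. In the split case each coordinate of the quotient is $\varpi_v^{-r_v}\mathcal O_{F_v}/\mathcal O_{F_v}$, and the counts multiply coordinatewise.

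The only delicate point is matching the explicit bases of \cite[Section~2.1]{Guo3} with the primitivity of $t_v$ and with $\Lambda_{1,v}^\perp\subset W_v^\perp$, especially in the ramified normalization by $\varpi_{E_v}$. I would check these directly from the Gram matrices in \eqref{adapted plane bases}.
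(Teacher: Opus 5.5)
Your proposal is correct and follows essentially the same route as the paper: the coset argument for \eqref{all place coset decomposition}, the bases \eqref{adapted plane bases} to check that $\alpha(w_v-t_v)\in\Lambda_{1,v}$ and $\Lambda_{1,v}\cap W_v^\perp=\mathcal O_{E_v}t_v$, and counting elements of given denominator in the explicit quotients. One slip: your split-case expression for $w_v+t_v$ is garbled (it is $((2,-q(w_v)),(q(w_v),2))$), but you never use it, since only the coordinatewise integrality of $\alpha(w_v-t_v)$, with $w_v-t_v=((0,q(w_v)),(q(w_v),0))$, matters there.
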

\begin{proof}
For each chosen vector $x_\alpha+z_\alpha\in\Lambda_v$, the vectors of $\Lambda_v$ whose $W_v$-component lies in $z_\alpha+M_v$ form
\begin{equation*}
    (x_\alpha+\Lambda_v^\perp)+(z_\alpha+M_v).
\end{equation*}
Indeed, subtracting $x_\alpha+z_\alpha$ and the resulting component in $M_v$ leaves a vector of $\Lambda_v^\perp$. These disjoint subsets give \eqref{all place coset decomposition}. The bases in \eqref{adapted plane bases} and the split coordinates show that $\alpha w_v+\beta t_v\in\Lambda_{1,v}$ precisely when $\alpha+\beta\in\mathcal O_{E_v}$ and $\alpha$ belongs to the numerator of the corresponding quotient in \eqref{coset representatives}. This gives \eqref{explicit plane cosets}.

The cardinalities follow from these quotients. At inert places, multiplication by $\varpi_v^{-k}$ changes the norm valuation by $-2k$. At ramified places, multiplication by $\varpi_{E_v}^{-k}$ changes it by $-k$. At split places the two coordinates vary independently. These observations give all three descriptions.
\end{proof}

\subsection{Local Whittaker coefficients}\label{local Whittaker subsection}

We now compute the two finite local functions in Theorem~\ref{derivative decomposition theorem}. By \eqref{c definition}, the function $c_{\Phi_v}$ is determined by the zero Whittaker coefficient. By \eqref{local k definition}, the function $k_{\Phi_v}$ is determined by nonzero coefficients in the opposite norm class. We calculate these in Propositions~\ref{zero Whittaker coefficients} and~\ref{nonzero Whittaker coefficients}, and then apply the formulas to the chosen $\Phi_v$ in Proposition~\ref{Whittaker chosen data}. The comparison with $m^0_{\Phi_v}$ and $m_{\Phi_v}$ in the height series will be made in Section~\ref{local intersection subsection}.

We first work with an additive character of conductor $\mathcal O_{F_v}$, i.e., the case when $v(d_v)=0$. At the end of the subsection, we pass to the character fixed in Section~\ref{notation subsection} when $v(d_v)>0$. We use the cosets $z_\alpha+M_v$ of Proposition~\ref{lattice coset decomposition}; the Whittaker functional acts only on this one-dimensional factor.

Recall the local Whittaker integrals of \cite[Section~6.1]{YZZ} and the congruence calculation of \cite[Section~7.3]{YZ}. For $a\ne0$, this gives
\begin{equation}\label{Whittaker ball sum}
    W_{a,v}^{\circ}(s,1,\phi_{2,v})=(1-N_v^{-s})\sum_{j\ge0}N_v^{j(1-s)}\int_{q(z)-a\in\varpi_v^j\mathcal O_{F_v}}\phi_{2,v}(z)\,\dd z.
\end{equation}
For $a=0$, the right side gives $\gamma_v^{-1}W_{0,v}$ before the additional normalization in Section~\ref{mixed subsection}. To obtain the formula, we integrate over $\mathcal O_{F_v}$ and the subsets $v(b)=-j$, $j\ge1$. The latter have integral
\begin{equation*}
    \int_{v(b)=-j}\psi_v(b(q(z)-a))\,\dd b
      =N_v^j1_{\varpi_v^j\mathcal O_{F_v}}(q(z)-a)
        -N_v^{j-1}1_{\varpi_v^{j-1}\mathcal O_{F_v}}(q(z)-a).
\end{equation*}
For $v(b)=-j$, we have $\delta_v(wn(b))^s=N_v^{-js}$. Combining this with the preceding equality and summing over $j$, we obtain \eqref{Whittaker ball sum}.

\subsubsection*{The zero Whittaker coefficient}

The zero coefficients determine $c_{\Phi_v}$ through \eqref{c definition}. The function $a_v$ is the one in Theorem~\ref{CM height theorem}. In the formulas below, $k$ denotes the denominator exponent of a nonzero coset, as defined before Proposition~\ref{lattice coset decomposition}.

\begin{proposition}\label{zero Whittaker coefficients}
For the zero coset at every finite place,
\begin{equation}\label{zero Whittaker value derivative}
    W_{0,v}^\circ(0,1,1_{M_v})=1,\qquad
    (W_{0,v}^\circ)'(0,1,1_{M_v})=-a_v(r_v)\log N_v.
\end{equation}
For a nonzero coset, its normalized value at zero is $0$, and its derivative is as follows.
\begin{enumerate}[(1)]
\item At a split place, if exactly one coordinate is nonzero with denominator $\varpi_v^k$, then
\begin{equation*}
    (W_{0,v}^\circ)'(0,1,1_{z_\alpha+M_v})
       =\frac{N_v^{-k}}{1-N_v^{-1}}\log N_v.
\end{equation*}
If both coordinates are nonzero, the derivative is zero.
\item At an inert place,
\begin{equation}\label{inert coset zero derivative}
    (W_{0,v}^\circ)'(0,1,1_{z_\alpha+M_v})=\begin{cases}\displaystyle\frac{2N_v^{1-r_v}(N_v^{r_v-2k+1}-1)}{N_v^2-1}\log N_v,&2k\le r_v,\\[2mm]0,&2k>r_v.\end{cases}
\end{equation}
\item At a ramified place,
\begin{equation}\label{ramified coset zero}
    (W_{0,v}^{\circ})'(0,1,1_{z_\alpha+M_v})=\begin{cases}\displaystyle N_v^{-r_v}\frac{N_v^{r_v-k+1}-1}{N_v-1}\log N_v,&k\le r_v,\\[2mm]0,&k>r_v.\end{cases}
\end{equation}
\end{enumerate}
\end{proposition}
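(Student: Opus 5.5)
The plan is to start from the ball-sum formula \eqref{Whittaker ball sum} with $a=0$. With $a=0$ the condition in the $j$-th term is just $q(z)\in\varpi_v^j\mathcal O_{F_v}$, so everything reduces to computing the volumes
\begin{equation*}
    V_j(\phi)=\vol\{z\in W_v:\phi(z)\neq0,\ q(z)\in\varpi_v^j\mathcal O_{F_v}\}
\end{equation*}
for $\phi=1_{z_\alpha+M_v}$. We then multiply by $\frac{L_v(s+1,\eta_v)}{L_v(s,\eta_v)}|D_vd_v|_v^{-1/2}$ and differentiate at $s=0$. Throughout we assume $v(d_v)=0$, as arranged at the start of the subsection, so only $|D_v|_v$ enters.

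\textbf{Zero coset.} Write $z=xw_v$ with $x\in\mathcal O_{E_v}$, so that $q(z)=N(x)q(w_v)$. Then $V_j$ is $\vol(M_v)$ times the volume of $\{x:v(N(x))\ge j-r_v\}$. This equals $1$ for $j\le r_v$. For larger $j$ the norm-valuation shells of $\mathcal O_{E_v}$ give a geometric tail:
\begin{itemize}
\item split case: $\sum_{i+i'\ge m}$ of products of shells;
\item inert case: $N_v^{-2\lceil m/2\rceil}$;
\item ramified case: $N_v^{-m}$.
\end{itemize}
Here $m=j-r_v$. For $\vol(M_v)$ we use the self-dual measure, which gives $|q(w_v)|_v\,|D_v|_v^{1/2}$, up to the standard factor in the ramified case.

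Summing $(1-N_v^{-s})\sum_j N_v^{j(1-s)}V_j$ gives a rational function in $X=N_v^{-s}$. The normalizing ratio $L_v(s+1)/L_v(s)$ is designed to cancel its pole-type factor. We first check that the value at $s=0$ is $1$; this also fixes the constant, since the $j\le r_v$ part contributes $|q(w_v)|_v\sum_{j\le r_v}N_v^{j}$. Then $-\frac{d}{ds}$ is $\log N_v$ times $\frac{d}{dX}$ at $X=1$. The term linear in $r_v$ gives the $c$ in $a_v(c)$, and the geometric remainders give $\frac{2N_v}{N_v^2-1}(1-N_v^{-c})$ (inert) and $\frac{1-N_v^{-c}}{N_v-1}$ (ramified). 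In the ramified case with $r_v\ge1$ we have $c_v=r_v-1$, so the formula is stated as $a_v(r_v)$. We will need to be careful here, since the index shift by $\varpi_{E_v}$ versus $\varpi_v$ must reproduce exactly the stated function.

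\textbf{Nonzero cosets.} For $z\in z_\alpha+M_v$ with $\alpha$ of denominator exponent $k$, the norm valuation is constant on the coset and equal to the value $\nu$ listed in Proposition~\ref{lattice coset decomposition}:
\begin{itemize}
\item inert: $\nu=r_v-2k$;
\item ramified: $\nu=r_v-k$;
\item split, one coordinate nonzero: $q(z)$ has valuation $r_v-k$ times a unit, with only one coordinate constrained.
\end{itemize}
Hence $V_j=\vol(M_v)$ for $j\le\nu$ and $V_j=0$ for $j>\nu$. The sum becomes a finite geometric series
\begin{equation*}
    (1-X)\,\vol(M_v)\sum_{j=0}^{\nu}N_v^jX^j,
\end{equation*}
taken to be empty if $\nu<0$. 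It vanishes at $X=1$. Its derivative is $-\vol(M_v)\frac{N_v^{\nu+1}-1}{N_v-1}$ times $\frac{d}{ds}X=-\log N_v$, multiplied by the $L$-ratio at $s=0$:
\begin{itemize}
\item inert: $L_v(1)/L_v(0)=(1-N_v^{-2})^{-1}\cdot 2$ after accounting for $\eta_v=-1$;
\item ramified: $1$;
\item split: $(1-N_v^{-1})^{-1}$ with the appropriate zero.
\end{itemize}
The normalizing ratio is in fact singular in the split and inert cases, since $L_v(s,\eta_v)^{-1}$ at $s=0$ equals $1-\eta_v(\varpi_v)$. We therefore must track which factor vanishes; for the split case this produces the $\frac{N_v^{-k}}{1-N_v^{-1}}$ shape. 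The inert case yields $\frac{2N_v^{1-r_v}(N_v^{r_v-2k+1}-1)}{N_v^2-1}$. In the inert case the sum over $j$ should really run over the even steps of the norm valuation, giving $N_v^2$-ratios, and we will check this against the stated formula. In the split case with both coordinates nonzero, $q(z)$ is a fixed element with no freedom. The coset volume computation then shows that $V_j$ is constant in the relevant range, and the $(1-X)$ factor together with the extra split zero gives derivative $0$.

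\textbf{Expected obstacle.} The main difficulty is bookkeeping the normalizing constants: the self-dual measure on $W_v$ (including $|D_v|^{1/2}$ and the $q(w_v)$ scaling), the Weil index, and especially the behaviour of $L_v(s+1,\eta_v)/L_v(s,\eta_v)$ at $s=0$. The vanishing of the value for nonzero cosets and the precise constants depend on these. To pin them down, we would first verify the zero-coset case $r_v=0$, where the value must equal $1$ and the derivative $0$, and use it to fix the constants before treating general $r_v$ and $k$.
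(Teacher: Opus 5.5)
Your framework (the ball sum \eqref{Whittaker ball sum} at $a=0$, coset volumes, normalization, differentiation in $X=N_v^{-s}$) is the paper's, but the nonzero-coset step fails at split places. With exactly one nonzero coordinate, the norm does \emph{not} have constant valuation. Write $z=(z_1,z_2)w_v$ with $z_1\in\alpha_1+\mathcal O_{F_v}$ of exact valuation $-k$ and $z_2\in\mathcal O_{F_v}$ unconstrained. Then $q(z)=q(w_v)z_1z_2$ fills out all of $\varpi_v^{r_v-k}\mathcal O_{F_v}$, so $V_j=N_v^{-k-j}$ for $j\ge r_v-k$, not $0$. This tail is what produces the answer. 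It sums to $N_v^{-k}X^{r_v-k}/(1-X)$ and cancels the prefactor $1-X$, so the ball sum equals $N_v^{-k}X^{r_v-k}$ plus $(1-X)$ times a polynomial. Only the simple zero of the normalization $(1-X)/(1-N_v^{-1}X)$ then remains, and differentiating gives $\frac{N_v^{-k}}{1-N_v^{-1}}\log N_v$. Your finite geometric series would instead give a double zero and derivative $0$. That is correct only when both coordinates are nonzero, which is the genuinely constant-valuation case, with valuation $r_v-k_1-k_2$.

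Your normalizing constants are also wrong. The ratio $L_v(s+1,\eta_v)/L_v(s,\eta_v)$ is regular at $s=0$. At split places it is $(1-X)/(1-N_v^{-1}X)$, which has a simple zero; this zero cancels the simple pole of the split zero-coset ball sum and forces the one-coordinate values to vanish. At inert places it is $(1+X)/(1+N_v^{-1}X)$, equal to $2/(1+N_v^{-1})$ at $s=0$, not $2/(1-N_v^{-2})$. Your factor would make the inert answer too large by $(1-N_v^{-1})^{-1}$. There are also no ``even steps'' on an inert nonzero coset: the valuation is constantly $r_v-2k$, so every $j\le r_v-2k$ contributes. The denominator $N_v^2-1=(N_v-1)(N_v+1)$ gets its factor $N_v+1$ from the normalization. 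Finally, you cannot fix constants by demanding the value $1$, since that value is part of the claim. The constants are already determined: $\vol(M_v)=N_v^{-r_v}$ at split and inert places, and $N_v^{-r_v-1/2}$ at ramified places, where the extra $N_v^{-1/2}$ is cancelled by $|D_v|_v^{-1/2}=N_v^{1/2}$. With these, your sketched zero-coset computation does go through. For example, at a ramified place one gets $X^{r_v}+(1-X)N_v^{-r_v}\sum_{j<r_v}N_v^jX^j$, whose derivative is $-a_v(r_v)\log N_v$ directly; no shift to $c_v$ is involved.
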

\begin{proof}
Our strategy is to compute the volumes in \eqref{Whittaker ball sum} and apply the normalization at zero. We first treat $M_v$ and then the nonzero cosets.

\smallskip
\noindent\textbf{Step 1.} For $j\ge0$, the volumes of $\{z\in M_v:q(z)\in\varpi_v^j\mathcal O_{F_v}\}$ are
\begin{equation*}
    \begin{cases}
        N_v^{-r_v},&j\le r_v,\quad v\text{ split or inert},\\
        N_v^{-j}\bigl(1+(1-N_v^{-1})(j-r_v)\bigr),&j\ge r_v,\quad v\text{ split},\\
        N_v^{-r_v-2\lceil(j-r_v)/2\rceil},&j>r_v,\quad v\text{ inert},\\
        N_v^{-\max\{j,r_v\}-1/2},&v\text{ ramified}.
    \end{cases}
\end{equation*}
At split places this follows from $q(z_1,z_2)=q(w_v)z_1z_2$. At inert and ramified places, the valuations of norms are respectively twice the valuation in $E_v$ and the valuation in $E_v$.

Substitution of these volumes in \eqref{Whittaker ball sum} reduces the coefficient to geometric sums. The additional normalization of $W_{0,v}^\circ$ is $(1-N_v^{-s})/(1-N_v^{-s-1})$ at split places, $(1+N_v^{-s})/(1+N_v^{-s-1})$ at inert places, and $N_v^{1/2}$ at ramified places. After taking the geometric sums, we obtain, respectively,
\begin{equation}\label{split centered family}
    W_{0,v}^{\circ}(s,1,1_{M_v})=N_v^{-r_v s}+\frac{(1-N_v^{-s})^2N_v^{-r_v}}{1-N_v^{-s-1}}\sum_{j=0}^{r_v-1}N_v^{j(1-s)}.
\end{equation}
\begin{equation*}
    W_{0,v}^\circ(s,1,1_{M_v})=\frac{N_v^{-r_v}(1+N_v^{-s})\left(1+(1-N_v^{-1})\sum_{j=1}^{r_v}N_v^{j(1-s)}\right)-(1-N_v^{-1})N_v^{-(r_v+1)s}}{1+N_v^{-s-1}}.
\end{equation*}
\begin{equation}\label{ramified centered family}
    W_{0,v}^{\circ}(s,1,1_{M_v})=N_v^{-r_v s}+(1-N_v^{-s})N_v^{-r_v}\sum_{j=0}^{r_v-1}N_v^{j(1-s)}.
\end{equation}
Evaluation and differentiation at $s=0$ give \eqref{zero Whittaker value derivative}.

\smallskip
\noindent\textbf{Step 2.} Next, we evaluate the nonzero cosets. At a split place with one nonzero coordinate of denominator $\varpi_v^k$, the norm map sends the measure on the coset to a constant multiple of Haar measure on $\varpi_v^{r_v-k}\mathcal O_{F_v}$. Its total volume is $N_v^{-r_v}$, and for $j\ge r_v-k$ the inverse image of $\varpi_v^j\mathcal O_{F_v}$ has volume $N_v^{-k-j}$. The infinite part of \eqref{Whittaker ball sum} is therefore $N_v^{-k-(r_v-k)s}$. After the normalization of $W_{0,v}^\circ$, differentiation gives the stated formula for this coset; the finite part has two factors $1-N_v^{-s}$ and has zero derivative. If both coordinates are nonzero, all norms have valuation $r_v-k_1-k_2$. Only finitely many terms remain in \eqref{Whittaker ball sum}, so the same two factors give derivative zero.

At an inert place, all vectors in a nonzero coset of denominator $\varpi_v^k$ have norm valuation $r_v-2k$. The total volume is $N_v^{-r_v}$. Thus the sum in \eqref{Whittaker ball sum} stops at $j=r_v-2k$. After taking the derivative and multiplying by the normalization factor $2/(1+N_v^{-1})$, we obtain \eqref{inert coset zero derivative}. The sum is empty when $2k>r_v$.

At a ramified place, the norm valuation on the coset is $r_v-k$, and its volume is $N_v^{-r_v-1/2}$. The same calculation, with normalization factor $N_v^{1/2}$, gives
\begin{equation*}
    N_v^{-r_v}\sum_{j=0}^{r_v-k}N_v^j\log N_v
\end{equation*}
when $k\le r_v$, and zero otherwise. This is \eqref{ramified coset zero}. Note that in all three cases, the normalized value at $s=0$ is zero.
\end{proof}

\subsubsection*{The nonzero Whittaker coefficients}

Now, we compute the coefficients which determine $k_{\Phi_v}$. Here $v$ is nonsplit and $a=q(y_2)$ with $0\ne y_2\in W(v)_v$, so $\eta_v(a/q(w_v))=-1$. This condition makes the local Whittaker value at $s=0$ vanish and allows us to compute its derivative by a finite sum.

\begin{proposition}\label{nonzero Whittaker coefficients}
For $a\ne0$ with $\eta_v(a/q(w_v))=-1$, we have
\begin{equation*}
    W_{a,v}^\circ(0,1,1_{z_\alpha+M_v})=0.
\end{equation*}
The derivatives are given by the following formulas.
\begin{enumerate}[(1)]
\item At an inert place, the coefficient on the zero coset is
\begin{equation}\label{inert opposite derivative}
    \frac{2(W_{a,v}^\circ)'(0,1,1_{M_v})}{1+N_v^{-1}}=\begin{cases}\displaystyle\frac{2(N_v^{v(a)+1}-1)}{N_v^{r_v-1}(N_v^2-1)}\log N_v,&0\le v(a)<r_v,\\[2mm](v(a)+1-a_v(r_v))\log N_v,&v(a)\ge r_v.\end{cases}
\end{equation}
For a nonzero coset of denominator $\varpi_v^k$, it is
\begin{equation*}
    \frac{2(W_{a,v}^\circ)'(0,1,1_{z_\alpha+M_v})}{1+N_v^{-1}}
    =\begin{cases}
        \displaystyle\frac{2N_v^{1-r_v}(N_v^{\min\{v(a),r_v-2k\}+1}-1)}{N_v^2-1}\log N_v,
             &v(a)\ge0,\ 2k\le r_v,\\[2mm]
        0,&\text{otherwise}.
    \end{cases}
\end{equation*}
\item At a ramified place, the coefficient on the zero coset is
\begin{equation}\label{ramified center nonzero derivative}
    N_v^{1/2}(W_{a,v}^\circ)'(0,1,1_{M_v})
      =\begin{cases}
        \displaystyle N_v^{-r_v}\frac{N_v^{v(a)+1}-1}{N_v-1}\log N_v,&0\le v(a)<r_v,\\[2mm]
        \displaystyle\left(v(a)+1-r_v+\frac{1-N_v^{-r_v}}{N_v-1}\right)\log N_v,&v(a)\ge r_v.
      \end{cases}
\end{equation}
For a nonzero coset of denominator $\varpi_{E_v}^k$, it is
\begin{equation*}
    N_v^{1/2}(W_{a,v}^\circ)'(0,1,1_{z_\alpha+M_v})
      =\begin{cases}
        \displaystyle N_v^{-r_v}\frac{N_v^{\min\{r_v-k,v(a)\}+1}-1}{N_v-1}\log N_v,
             &\min\{r_v-k,v(a)\}\ge0,\\[2mm]
        0,&\min\{r_v-k,v(a)\}<0.
      \end{cases}
\end{equation*}
\end{enumerate}
For $v(a)<0$, all these Whittaker functions are identically zero.
\end{proposition}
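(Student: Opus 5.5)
The plan is to use the ball-sum formula \eqref{Whittaker ball sum} directly, exactly as in the proof of Proposition~\ref{zero Whittaker coefficients}, now with $a\ne0$. For a coset $z_\alpha+M_v$ the whole computation reduces to the volumes
\begin{equation*}
    V_j(a)=\vol\{z\in z_\alpha+M_v:q(z)-a\in\varpi_v^j\mathcal O_{F_v}\},
\end{equation*}
which I will compute by pushing the measure on the coset forward along $q$.

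\textbf{Key observation.} Since $\eta_v(a/q(w_v))=-1$, the value $a$ is not a norm from $W_v$, so $a$ never lies in $q(W_v)$. For $z\in z_\alpha+M_v$ the congruence $q(z)\equiv a\pmod{\varpi_v^j}$ can therefore hold only for $j$ up to a finite bound.

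\textbf{Inert places.} Norms from $W_v$ have valuation $\equiv r_v\pmod 2$, and $v(a)\not\equiv r_v\pmod 2$.
\begin{itemize}
\item If $v(q(z))\ne v(a)$, the congruence holds iff $j\le\min\{v(q(z)),v(a)\}$.
\item On the zero coset I stratify by $v(q(z))=r_v+2m$. Each stratum has volume $N_v^{-r_v-2m}(1-N_v^{-2})$, as in Step~1 of Proposition~\ref{zero Whittaker coefficients}.
\item Summing $N_v^{j(1-s)}V_j$ gives a finite sum: $\sum_{j=0}^{J}N_v^{j(1-s)}V_j(a)$ with $J=\min\{v(a),\ldots\}$.
\item On a nonzero coset of denominator $\varpi_v^k$, all norms have valuation $r_v-2k$. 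The sum then stops at $j=\min\{v(a),r_v-2k\}$, and $V_j$ equals the full coset volume $N_v^{-r_v}$.
\end{itemize}

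\textbf{Ramified places.} Norm classes are distinguished by the unit square class, not the valuation, so $v(q(z))=v(a)$ is possible. The congruence still fails for $j>v(a)$: if $q(z)\equiv a\pmod{\varpi_v^{v(a)+1}}$, then $a/q(z)\in1+\varpi_v\mathcal O_{F_v}$, which consists of norms since $v\nmid2$. This contradicts $\eta_v(a/q(w_v))=-1$. Hence the cutoff is $j\le\min\{v(a),v(q(z))\}$.
\begin{itemize}
\item On the zero coset, $V_j$ is the volume of $\{v(q(z))\ge j\}$ from Step~1, namely $N_v^{-\max\{j,r_v\}-1/2}$, for $j\le v(a)$.
\item On a nonzero coset, $V_j=N_v^{-r_v-1/2}$ for $j\le\min\{r_v-k,v(a)\}$.
\end{itemize}

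\textbf{Vanishing and derivative.} In every case $W^\circ_{a,v}(s,1,\cdot)=(1-N_v^{-s})P(N_v^{-s})$ with $P$ a polynomial. Hence the value at $s=0$ is zero, and the derivative is $\log N_v\cdot P(1)=\log N_v\sum_j N_v^jV_j(a)$. If $v(a)<0$, no $j\ge0$ survives, since $q(z)\in\mathcal O_{F_v}$ on these cosets by Lemma~\ref{all place line types}; the function is identically zero.

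\textbf{Remaining arithmetic.} What is left is geometric sums, multiplied by the stated normalizations $2/(1+N_v^{-1})$ at inert places and $N_v^{1/2}$ at ramified places.
\begin{itemize}
\item For $v(a)<r_v$ the sum is $N_v^{-r_v}\sum_{j\le v(a)}N_v^j$. This gives the first lines of \eqref{inert opposite derivative} and \eqref{ramified center nonzero derivative}.
\item For $v(a)\ge r_v$ at a ramified place, $\sum_{j<r_v}N_v^{j-r_v}+\sum_{j=r_v}^{v(a)}1$ gives $v(a)+1-r_v+\frac{1-N_v^{-r_v}}{N_v-1}$.
\item The inert analogue, with the $\lceil\cdot\rceil$ volumes, yields $v(a)+1-a_v(r_v)$.
\end{itemize}

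\textbf{Main obstacle.} The step that most needs care is this last inert bookkeeping for $v(a)\ge r_v$. The parity-stratified volumes must be summed so that the constant $\frac{2N_v}{N_v^2-1}(1-N_v^{-r_v})$ of $a_v$ emerges exactly. I would handle it by splitting $j$ by parity relative to $r_v$ and checking that the total matches $v(a)+1-r_v$ plus the correction.
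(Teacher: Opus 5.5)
Your proposal is correct and is essentially the paper's proof: the ball-sum formula \eqref{Whittaker ball sum}, together with $v(q(z)-a)=\min\{v(q(z)),v(a)\}$ (by parity at inert places and by the nonsquare-residue argument at ramified places), truncates the sum at $j=\min\{v(a),v(q(z))\}$. The Step~1 volumes then give the stated finite geometric sums, and your parity split for inert $v(a)\ge r_v$ does produce $v(a)+1-a_v(r_v)$. One small correction: for $v(a)<0$, justify the vanishing by $v(q(z)-a)=\min\{v(q(z)),v(a)\}\le v(a)<0$, not by integrality of $q$ on the cosets, because on a nonzero coset the norm has valuation $r_v-2k$ (inert) or $r_v-k$ (ramified), and this can be negative.
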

\begin{proof}
Since $a$ is in the opposite norm class, we have $v(q(z)-a)=\min\{v(q(z)),v(a)\}$. The result follows from \eqref{Whittaker ball sum}. Indeed, at an inert place, $v(a)$ and $r_v$ have opposite parity. At a ramified place, if $q(z)$ and $a$ have equal valuation, their quotient has nonsquare residue. These are the two cases of \cite[Lemma~7.5]{YZ}. Thus the sum stops at $j=v(a)$ and its derivative is $\log N_v$ times the sum of $N_v^j$ multiplied by the volumes used in the preceding proof.

On the zero coset, we substitute the volumes from Step~1 of Proposition~\ref{zero Whittaker coefficients}. After taking the finite geometric sums, we obtain \eqref{inert opposite derivative} and \eqref{ramified center nonzero derivative}. On a nonzero coset, the norm has constant valuation $r_v-2k$ at an inert place and $r_v-k$ at a ramified place. The sum therefore stops at the minimum of that valuation and $v(a)$. Its volume is $N_v^{-r_v}$ or $N_v^{-r_v-1/2}$, respectively. This gives the stated formulas for the nonzero cosets. A negative minimum gives an empty sum.

For later use at ramified places, the same calculation before differentiation gives
\begin{equation}\label{ramified centered nonzero}
    W_{a,v}^{\circ}(s,1,1_{M_v})=N_v^{-1/2}\left(N_v^{-r_v s}+\eta_v(a/q(w_v))N_v^{-(v(a)+1)s}+(1-N_v^{-s})N_v^{-r_v}\sum_{j=0}^{r_v-1}N_v^{j(1-s)}\right)
\end{equation}
when $v(a)\ge r_v$. For $0\le v(a)<r_v$, it gives
\begin{equation*}
    W_{a,v}^\circ(s,1,1_{M_v})
       =N_v^{-r_v-1/2}(1-N_v^{-s})\sum_{j=0}^{v(a)}N_v^{j(1-s)}.
\end{equation*}
For $r_v\le v(a) <j$, the congruence has volume $2N_v^{-j-1/2}$ if $\eta_v(a/q(w_v))=1$, and zero otherwise. Thus \eqref{ramified centered nonzero} holds in both cases. This finishes the proof.
\end{proof}

\subsubsection*{The local terms for the chosen Schwartz function}

The preceding coefficients now determine the local terms for the functions of Section~\ref{split data subsection}. At a place where $\Phi_v=1_{\Lambda_v}$, Proposition~\ref{lattice coset decomposition} and \eqref{c definition} give
\begin{equation}\label{c coset expansion}
    c_{\Phi_v}(1,y_1)=\sum_\alpha
       1_{x_\alpha+\Lambda_v^\perp}(y_1)
       (W_{0,v}^\circ)'(0,1,1_{z_\alpha+M_v}).
\end{equation}
At a nonsplit place, the same decomposition and \eqref{local k definition} give
\begin{equation}\label{k coset expansion}
    2k_{\Phi_v}(1,y_1+y_2)=\frac{2L_v(1,\eta_v)}{\vol(E_v^1)}
       \sum_\alpha1_{x_\alpha+\Lambda_v^\perp}(y_1)
       (W_{q(y_2),v}^\circ)'(0,1,1_{z_\alpha+M_v}).
\end{equation}
The factor in this formula is $2/(1+N_v^{-1})$ at an inert place with $v(d_v)=0$, and $N_v^{1/2}$ at a ramified place. Thus Proposition~\ref{nonzero Whittaker coefficients} already gives the coefficients in the normalization used for the height comparison.

\begin{proposition}\label{Whittaker chosen data}
The function $c_{\Phi_v}(g_v,\cdot)$ is Schwartz on $W_v^\perp$ and is zero at almost every place. For the functions chosen in Section~\ref{split data subsection}, we have the following additional formulas.
\begin{enumerate}[(1)]
\item At $v\in S_2$,
\begin{equation}\label{split zero derivative}
    c_{\Phi_v}(1,y_1)=-2\log N_v\,1_{\varpi_v\Lambda_v^\perp}(y_1)1_{\varpi_v^2\mathcal O_{F_v}^\times}(q(y_1)).
\end{equation}
\item At $v\in S_0(W)$, $c_{\Phi_v}(1,\cdot)=0$, and the coefficients of $2k_{\Phi_v}$ are those of \eqref{ramified center nonzero derivative} at $r_v=0$, multiplied by $1_{(\Lambda_v^\perp)^\vee}(y_1)$.
\item For the character fixed in Section~\ref{notation subsection}, if $v(d_v)>0$, the zero coefficient on $M_v$ is
\begin{equation*}
    (W_{0,v}^\circ)'(0,1,1_{M_v})
       =-a_v(r_v+v(d_v))\log N_v.
\end{equation*}
At inert places, the formulas for nonzero cosets in Proposition~\ref{zero Whittaker coefficients} use $r_v+v(d_v)$ in place of $r_v$. The formulas for $2k_{\Phi_v}$ use $r_v+v(d_v)$ and $v(a)+v(d_v)$ in place of $r_v$ and $v(a)$. At split places, the zero derivatives on nonzero cosets are unchanged.
\end{enumerate}
\end{proposition}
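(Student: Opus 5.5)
The plan is to expand $c_{\Phi_v}(1,\cdot)$ through \eqref{c definition} and the coset decomposition of Proposition~\ref{lattice coset decomposition}, then read off each case from the local coefficients in Propositions~\ref{zero Whittaker coefficients} and~\ref{nonzero Whittaker coefficients}.

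\textbf{Schwartz property and vanishing.} At $g_v=1$ the term $\log\delta_v(g_v)$ in \eqref{c definition} vanishes. Hence \eqref{c coset expansion} writes $c_{\Phi_v}(1,\cdot)$ as a finite sum of indicator functions of cosets $x_\alpha+\Lambda_v^\perp$, so it is Schwartz. For general $g_v$, the function $r_{W_v^\perp}(g_v)\phi_{1,v}$ is Schwartz, and the other factors are scalars. At almost every place we have $\Phi_v=1_{\Lambda_v}$, $r_v=0$, $v(d_v)=0$, the place is unramified, and $\Lambda_v=M_v\perp\Lambda_v^\perp$. At such a place there is only the zero coset, whose derivative is $-a_v(0)\log N_v=0$ by \eqref{zero Whittaker value derivative}. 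The $\log\delta_v$ term vanishes on the fixed compact subgroup, so $c_{\Phi_v}=0$ there.

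\textbf{Case (1): $v\in S_2$.} Here $\Lambda_v=\Lambda_v^\perp\perp M_v$ and $\Phi_v$ is built from $1_{\varpi_v\Lambda_v}$ and $1_{\Lambda_v}$ cut by the norm condition. I would not use the coset formula directly. Instead, for fixed $y_1$, I would apply \eqref{Whittaker ball sum} to $z\mapsto\Phi_v(y_1+z)$, with $a=0$ and the split normalization $(1-N_v^{-s})/(1-N_v^{-s-1})$. The condition $q(y_1)+q(z)\in\varpi_v^2\mathcal O_{F_v}^\times$ together with $z\in M_v$ or $\varpi_vM_v$ splits into two cases.

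If $v(q(y_1))=2$ and $y_1\in\varpi_v\Lambda_v^\perp$, then the $z$ with $v(q(z))\ge3$ contribute the same shape as $1_{\varpi_v M_v}$. I expect the derivative to come out as $-2\log N_v$, up to the combination with the second term.

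Otherwise the $z$-integrals are truncated, so the factor $(1-N_v^{-s})^2$ kills the derivative. The scalar $1/(1+N_v^n+N_v^{2n})$ must cancel against the first term once volumes are compared, as in Proposition~\ref{two place proposition}.

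This bookkeeping of volumes with the norm cutoff is the main obstacle. I would organize it by splitting $z$ according to $v(z_1),v(z_2)$ in split coordinates, and computing the infinite tail of \eqref{Whittaker ball sum}, since that tail is the only part contributing to the derivative. The $\varpi_v$-scaling gives the factor $N_v^{-2}$ relative to $M_v$. Combined with the tail formula $N_v^{-k-(r-k)s}$ from Step~2, this should give $-2\log N_v$.

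\textbf{Case (2): $v\in S_0(W)$.} By \eqref{unit test function}, $\Phi_v=1_{(\Lambda_v^\perp)^\vee}\otimes1_{M_v}$ is a single tensor with $r_v=0$. Then $c_{\Phi_v}(1,y_1)=1_{(\Lambda_v^\perp)^\vee}(y_1)\cdot(-a_v(0)\log N_v)=0$. Likewise $2k_{\Phi_v}$ is $1_{(\Lambda_v^\perp)^\vee}(y_1)$ times the normalized derivative \eqref{ramified center nonzero derivative} at $r_v=0$, because the factor $2L_v(1,\eta_v)/\vol(E_v^1)$ equals $N_v^{1/2}$.

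\textbf{Case (3): $v(d_v)>0$.} Let $\psi_v$ have conductor $\varpi_v^{-v(d_v)}\mathcal O_{F_v}$. Rescaling $b$ turns \eqref{Whittaker ball sum} into the same sum with $q$ replaced by $\varpi_v^{-v(d_v)}q$ and $a$ replaced by $\varpi_v^{-v(d_v)}a$, up to the volume factor $|d_v|^{1/2}$. That factor is absorbed by $|D_vd_v|^{-1/2}$ and by the definition of $\vol(E_v^1)$. This shifts $r_v$ to $r_v+v(d_v)$ and $v(a)$ to $v(a)+v(d_v)$.

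The zero coefficient on $M_v$ then becomes $-a_v(r_v+v(d_v))\log N_v$, and the inert coset formulas shift in the same way. At split places, the nonzero-coset derivative $N_v^{-k}/(1-N_v^{-1})$ depends only on $k$, so it is unchanged.
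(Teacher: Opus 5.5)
Your arguments for the Schwartz property, case (2), and case (3) follow the paper's. Two small corrections apply there.
\begin{enumerate}[(i)]
\item In (3) the exponent has the wrong sign. Since $\psi_v(x)=\psi_v^0(\varpi_v^{v(d_v)}x)$ with $\psi_v^0$ of conductor $\mathcal O_{F_v}$, the congruences in \eqref{Whittaker ball sum} involve $\varpi_v^{v(d_v)}q$ and $\varpi_v^{v(d_v)}a$. That is what produces $r_v+v(d_v)$.
\item Your vanishing argument only covers $g_v$ in the compact subgroup. For every $g_v$, the paper uses \cite[Lemma~4.8]{Guo1}, $W_{0,v}^\circ(s,g_v,1_{M_v})=\delta_v(g_v)^{-s}r_{\BW_v}(g_v)1_{M_v}(0)$. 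Its derivative cancels the $\log\delta_v$ term in \eqref{c definition}.
\end{enumerate}

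The genuine gap is in case (1). Your ``otherwise'' claim that the $z$-integrals are truncated is false for $y_1\in\Lambda_v^\perp\setminus\varpi_v\Lambda_v^\perp$ with $v(q(y_1))=2$. There $z\mapsto 1_{M_v}(z)1_{\varpi_v^2\mathcal O_{F_v}^\times}(q(y_1)+q(z))$ agrees with $1_{M_v}$ near $z=0$; the same holds for the scalar summand in your first case. The proposed cancellation of $1/(1+N_v^n+N_v^{2n})$ against the first summand ``as in Proposition~\ref{two place proposition}'' is not a real mechanism. That volume identity concerns integrals over all of $V_v$ or $W_v^\perp$, whereas here $y_1$ is fixed and only a one-dimensional integral over $W_v$ occurs. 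Moreover, the target formula has no term on $\Lambda_v^\perp\setminus\varpi_v\Lambda_v^\perp$ and no dependence on $1+N_v^n+N_v^{2n}$. So the scalar summand must contribute exactly zero, not cancel something.

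The missing observation is that $\Lambda_v=\Lambda_v^\perp\perp M_v$ with $\Lambda_v$ self-dual forces $r_v=0$. Hence the scalar summand contributes a multiple of $(W_{0,v}^\circ)'(0,1,1_{M_v})=-a_v(0)\log N_v=0$, the very fact you used at almost all places. The norm cutoff is then handled without bookkeeping. The difference $1_{\varpi_v^2\mathcal O_{F_v}^\times}(q(y_1)+q(z))-1_{\varpi_v^2\mathcal O_{F_v}^\times}(q(y_1))$ vanishes once $v(q(z))\ge3$. It therefore only produces finite sums in \eqref{Whittaker ball sum} carrying $(1-N_v^{-s})^2$, which have zero derivative. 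What remains is $1_{\varpi_v^2\mathcal O_{F_v}^\times}(q(y_1))1_{\varpi_v\Lambda_v^\perp}(y_1)(W_{0,v}^\circ)'(0,1,1_{\varpi_vM_v})$. Since $\varpi_vM_v$ is generated by $\varpi_vw_v$ of norm valuation $2$, \eqref{split centered family} with $r_v$ replaced by $2$ gives $-a_v(2)\log N_v=-2\log N_v$. The nonzero-coset tail $N_v^{-k-(r_v-k)s}$ from Step~2 is not the relevant formula, and no splitting by $v(z_1),v(z_2)$ is needed.
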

\begin{proof}
The Schwartz property follows from the finite tensor decomposition in \eqref{c definition}. At a standard place with $r_v=v(d_v)=0$, \cite[Lemma~4.8]{Guo1} gives
\begin{equation*}
    W_{0,v}^\circ(s,g_v,1_{M_v})
       =\delta_v(g_v)^{-s}r_{\BW_v}(g_v)1_{M_v}(0).
\end{equation*}
Differentiation cancels the second term in \eqref{c definition}, which proves vanishing at almost every place.

For (1), we identify $M_v$ with $\mathcal O_{F_v}^2$, with norm $z_1z_2$. The generators of $M_v$ and $\varpi_vM_v$ have norm valuations $0$ and $2$. Formula~\eqref{split centered family} therefore gives zero derivatives $0$ and $-2\log N_v$, respectively. A function whose norms are bounded away from zero contributes a finite sum in \eqref{Whittaker ball sum} for $a=0$. After the normalization of $W_{0,v}^\circ$, it has two factors $1-N_v^{-s}$, so its derivative is zero. For fixed $y_1$, the difference
\begin{equation*}
    1_{\varpi_v^2\mathcal O_{F_v}^{\times}}(q(y_1)+q(z))
       -1_{\varpi_v^2\mathcal O_{F_v}^{\times}}(q(y_1))
\end{equation*}
is supported where $q(z)$ is bounded away from zero. The norm condition in \eqref{degenerate phi} therefore multiplies the two derivatives by $1_{\varpi_v^2\mathcal O_{F_v}^{\times}}(q(y_1))$. This proves \eqref{split zero derivative}.

For (2), the one-dimensional factor is $1_{M_v}$ with $r_v=0$. Formulas~\eqref{zero Whittaker value derivative} and~\eqref{ramified center nonzero derivative} give the assertions.

For (3), we compare the fixed character with a character of conductor $\mathcal O_{F_v}$ by multiplication by a generator of $d_v$. The congruences in \eqref{Whittaker ball sum} then involve $d_vq$ and $d_va$. The self-dual measure on $F_v$ contributes $N_v^{-v(d_v)/2}$, which cancels $|d_v|_v^{-1/2}$ in the normalization of $W_{0,v}^\circ$. This gives the stated zero Whittaker derivative and the stated shift of the nonzero cosets. In \eqref{k coset expansion}, the same measure factor cancels $\vol(E_v^1)^{-1}$. Hence the nonzero coefficients have the stated shifts. 
\end{proof}

\section{Height series}\label{height series section}

In this section, we define the height series $\mathcal H(g,\Phi)$ in terms of arithmetic intersections and compute its local contributions. Theorem~\ref{height decomposition theorem} gives an explicit decomposition, and separates the proper intersections from the degrees of divisor line bundles restricted to $\mathcal P$. We compute the former in Section~\ref{local intersection subsection} and the latter in Section~\ref{adjunction subsection}. These are the arithmetic terms that will be compared with the derivative series.

To ensure that both the Shimura variety and its integral model are proper, we assume throughout this section that $F\neq\mathbb{Q}$. However, many of the local computations in this section remain valid when $F=\mathbb{Q}$.

\subsection{Integral models and arithmetic generating series}\label{integral model subsection}

First, we recall the integral model, Hodge line bundle, and CM cycle. We use the arithmetic intersection theory of Gillet--Soul\'e \cite{GS}. For admissible extensions, we follow the pairings in \cite{YZZ} and their higher-dimensional formulation in \cite[Appendix~A]{Qiu}, with the normalization of \cite[Sections~2.2 and~4.1]{Guo2}.

For $V$ and $U$ fixed in Section~\ref{notation subsection}, the associated Shimura variety has complex points
\begin{equation*}
    X_U(\C)=\U(V)(F)\backslash\bigl(D\times\U(V)(\Af)\bigr)/U,
    \qquad D=\{z\in\mathbf P(V_\iota):q(z)<0\}.
\end{equation*}
Since $F\ne\Q$, the space $V$ is definite at another real place and is anisotropic over $F$. Thus $X_U$ is proper. The construction of Rapoport--Smithling--Zhang \cite{RSZ3}, in the form of \cite[Theorem~4.4.4 and Corollary~4.4.8]{Qiu}, gives the integral model after a common reflex-field extension. We use the normalization in \cite[Section~2.2]{Guo2} and denote the resulting model by $\mathcal X_U$.

The tautological line bundle on $D$, with metric $\|s_z\|^2=-q(s_z)$, descends to the Hodge line bundle $L$ on $X_U$.We use the integral extension of the Hodge line bundle on the RSZ model, as defined in \cite[Section~2.3]{Guo2}. Note that in the local computations below, a chart means an \'etale neighborhood on this model, or its formal completion when considering formal groups. On such a chart $\mathcal X$, let $\mathcal A_0$ be the auxiliary CM abelian scheme and $\mathcal A$ the universal abelian scheme. At a place unramified in $E/F$, the signature determines the embedding set $\mathfrak A_v$ and its Eisenstein ideal $J_{\mathfrak A_v}$, as in \cite[Section~2.3]{Guo2}. The inverse Hodge bundle is
\begin{equation*}
    \mathcal L^{-1}=\operatorname{Hom}_{\mathcal O_E\otimes\mathcal O_{\mathcal X}}
       \bigl(\Fil\mathcal A_0,(J_{\mathfrak A_v}\Lie\mathcal A)_1\bigr).
\end{equation*}
The subscript $1$ denotes the line bundle selected by the signature. The relative modification in \cite[Proposition~7.8]{LMZ} preserves this quotient and its generic identification, including the case when $F_v/\Q_p$ is ramified.

At a place $v$ ramified in $E/F$, we also denote by $\varpi_{E_v}$ its image in $\mathcal O_{\mathcal X}$. Then we define
\begin{equation*}
    \mathcal I_{\mathcal A}=\operatorname{im}\bigl(
        \iota_{\mathcal A}(\varpi_{E_v})-\varpi_{E_v}:
        \Lie\mathcal A\longrightarrow\Lie\mathcal A\bigr).
\end{equation*}
We define $\mathcal L^{-1}=\operatorname{Hom}(\Fil\mathcal A_0,\mathcal I_{\mathcal A})$ for a $\varpi_{E_v}$-modular lattice, and use $\mathcal I_{\mathcal A}^{\vee\vee}$ in the almost $\varpi_{E_v}$-modular case. These are the extensions in \cite[Section~2.3]{Guo2}. With the tautological metric, we denote the metrized line bundle by $\LL$.

Next, we define the normalized CM cycle used in the pairings. Let $f_W:P_W\to X_U$ be the finite morphism from the zero-dimensional Shimura variety of $\U(W)$ at level $U\cap\U(W)$. Its pushforward includes the stabilizer multiplicities. We put
\begin{equation}\label{CM cycle normalization}
    P=\frac{(f_W)_*[P_W]}{\deg((f_W)_*[P_W])}.
\end{equation}
Its normalized arithmetic closure on $\mathcal X_U$ is denoted by $\mathcal P$. On the RSZ integral model, the normalization divides by the degrees of the auxiliary toric factor and the reflex-field extension, as in \cite[Section~2.2]{Guo2}.

For an arithmetic divisor $\widehat{\mathcal D}$ meeting $\mathcal P$ properly, the arithmetic intersection is
\begin{equation}\label{arithmetic normalization}
    \widehat{\mathcal D}\cdot\mathcal P=\frac{1}{\delta_Z[\widetilde E:E]}\left(\sum_{\widetilde\nu\nmid\infty}\Int_{\widetilde\nu}(\mathcal D,\mathcal P)\log N\widetilde\nu+\sum_{\widetilde\sigma:\widetilde E\hookrightarrow\C}g_{\mathcal D,\widetilde\sigma}(\mathcal P)\right).
\end{equation}
Here $\widetilde E$ is a common reflex field, $\delta_Z$ is the degree of the auxiliary toric factor, and $\Int_{\widetilde\nu}$ is the Euler characteristic of the derived intersection on the relative local model. Both sums include the multiplicities in \eqref{CM cycle normalization}. Both places of $E$ above a split $v$ contribute to the finite sum.

Meanwhile, if the generic fiber of an arithmetic divisor contains a CM point, its intersection with the corresponding component of $\mathcal P$ is given by the arithmetic degree of the associated metrized divisor line bundle restricted to that component. We use this normalization and the projection formula for finite pullback throughout the local computations.

Next we briefly recall the arithmetic generating series of special divisors. For $t\in F_{>0}$, let $V_{f,t}=\{y\in V(\A_{E,f}):q(y)=t\}$, where $\A_{E,f}$ denotes the finite adeles of $E$. Let $Z(y)_U$ be the special divisor attached to $y^\perp$, with its finite map to $X_U$ and stabilizer multiplicities. With the standard archimedean Whittaker factor understood, we put
\begin{equation*}
    Z_t(g,\Phi)=\sum_{y\in U\backslash V_{f,t}}r(g)\Phi(y)Z(y)_U,
    \qquad Z_*(g,\Phi)=\sum_{t\in F_{>0}}Z_t(g,\Phi).
\end{equation*}
The finite functions are $U$-invariant. For their translates, we use a common smaller level and the projection formula.

Let $\widehat{\mathcal Z}_t(g,\Phi)$ be the normalized $\LL$-admissible extension. More precisely, its Green function has mean zero on each connected component. At the smooth maximal level, its finite representative is the horizontal closure by \cite[Lemma~A.2.4]{Qiu}. The lifting description of these closures is given by \cite[Section~6.1.2 and Proposition~6.1.5]{Qiu}. For the dual lattice at $S_0(W)$, Lemma~\ref{unit horizontal divisor} identifies the closure with the lifting divisor of $\lambda x$, where $\lambda:\mathcal A\to\mathcal A^\vee$ is the polarization. At $v\in S_2$, Proposition~\ref{split arithmetic correction} computes the correction arising from requiring $x/\varpi_v$ to lift instead of $x$.

We put
\begin{equation*}
    \deg_L X_U=c_1(L)^n\cdot[X_U],\qquad
    \widehat\xi=\frac{\widehat c_1(\LL)^n}{\deg_L X_U},\qquad
    \widehat{\mathcal Z}_*(g,\Phi)=\sum_{t\in F_{>0}}\widehat{\mathcal Z}_t(g,\Phi).
\end{equation*}
For each connected component $X_i$, the normalized cycle satisfies
\begin{equation*}
    \deg(P|_{X_i})=\frac{\deg_L X_i}{\deg_L X_U}.
\end{equation*}
Indeed, the determinant on the embedded $\U(W)$ is the identity. Strong approximation for $\operatorname{SU}(V)$ identifies the map on components with the quotient from the torus level $U\cap\U(W)$ to $\det U$. Its fibers have equal weighted cardinality. The invariant measure also gives the $L$-degrees, as in \cite[equation~(2.3.5)]{Guo2}. Thus $P-L^n/\deg_L X_U$ has degree zero on each component.

The height series is the pairing
\begin{equation}\label{height series definition}
    \mathcal H(g,\Phi)=\left(\widehat{\mathcal Z}_*(g,\Phi)-\frac{\deg_L Z_*(g,\Phi)}{\deg_L X_U}\LL\right)\cdot(\mathcal P-\widehat\xi).
\end{equation}
The following proposition removes the degree and Hodge terms for our choice of $\Phi$. It also gives the modularity needed in the final comparison.

\begin{proposition}\label{cuspidal height pairing}
The height series is a holomorphic cusp form of the weight in Section~\ref{mixed subsection}. For the datum of Section~\ref{split data subsection} and $g_f\in K_f$,
\begin{equation*}
    \deg_L Z_*(g,\Phi)=0,\qquad
    \widehat{\mathcal Z}_*(g,\Phi)\cdot\widehat\xi=0.
\end{equation*}
Consequently,
\begin{equation}\label{height P minus xi}
    \mathcal H(g,\Phi)=\widehat{\mathcal Z}_*(g,\Phi)\cdot\mathcal P.
\end{equation}
\end{proposition}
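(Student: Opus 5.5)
The proof has three parts. First comes modularity, then the vanishing of the degree, and finally the vanishing of the $\widehat\xi$-pairing. The identity \eqref{height P minus xi} then follows at once from \eqref{height series definition}.

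\textbf{Modularity.} We invoke the modularity of the admissible arithmetic generating series of special divisors, proved by Qiu \cite{Qiu} for this RSZ model in the normalization of \cite[Section~4.1]{Guo2}. It states that the full series, including its constant term, is an automorphic form on $\UU(\A)$ with values in the arithmetic Chow group. The constant term is a multiple of $\widehat c_1(\LL)^\vee$ together with the Eisenstein term. Pairing with the degree-zero class $\mathcal P-\widehat\xi$ removes the constant term, because $\LL\cdot(\mathcal P-\widehat\xi)$ and the correction term involving $\LL$ combine compatibly. What remains is a holomorphic form of weight $n+1$ with the stated central character.

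Cuspidality comes from the fact that the generating series is the sum of the theta-type cusp-form part and the Eisenstein part $E_{\BV}(0,g,\Phi)$ times a fixed class. By Proposition~\ref{two place proposition}, $E_{\BV}$ and all its one-place modifications vanish identically for our $\Phi$. So the Eisenstein part drops out, and the remaining series is cuspidal.

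\textbf{Vanishing of the degree.} The generic degree $\deg_L Z_*(g,\Phi)$ is, by the geometric Siegel--Weil formula (Kudla; see \cite[Section~3]{Qiu}), proportional to the positive Fourier part of $E_{\BV}(0,g,\Phi)$ for the coherent space obtained by switching $\BV$ at $\iota$. The local section at each $v\in S_2$ is the same $\Phi_v$, so Proposition~\ref{two place proposition} kills it. Hence $\deg_L Z_*(g,\Phi)=0$ for $g_f\in K_f$, and the translates by $K_f$ are handled by invariance.

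\textbf{Vanishing of $\widehat{\mathcal Z}_*\cdot\widehat\xi$.} This is the main obstacle. By \cite[Section~4]{Guo2} and the arithmetic Siegel--Weil results quoted there (Bruinier--Howard--Yang \cite{BHY}), the pairing $\widehat{\mathcal Z}_t(g,\Phi)\cdot\widehat c_1(\LL)^n$ is expressed through the $t$-th coefficient of $E_{\BV}$ and its derivative. The derivative here is the central derivative of the incoherent Eisenstein series $E_{\BV}'(0,g,\Phi)$, up to local terms of the form $E(0,g,\Phi^{v}\otimes\Phi'_v)$ with one place changed. Both kinds of terms contain two or at least one zero factor from $S_2$. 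The derivative of a product with two vanishing local factors is zero, and a one-place modification still keeps a zero factor at the other place of $S_2$. Hence every contribution vanishes.

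The delicate point is controlling the non-horizontal corrections to the admissible extension, at $S_0(W)$ and at $S_2$. For these I would use the admissibility normalization: the mean-zero Green functions and the $\LL$-admissible vertical components. This normalization reduces the pairing with $\widehat\xi$ to $\deg_L$ times a constant, which is zero by the previous step.

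Combining the three parts gives $\mathcal H(g,\Phi)=\widehat{\mathcal Z}_*(g,\Phi)\cdot\mathcal P$.
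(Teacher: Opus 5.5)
Your first two parts are in line with the paper. One correction: in general it is the degree correction in \eqref{height series definition} that cancels the constant term $-r(g)\Phi(0)\LL$, not the pairing with $\mathcal P-\widehat\xi$, since $\LL\cdot(\mathcal P-\widehat\xi)$ need not vanish; for this $\Phi$ one simply has $r(g)\Phi(0)=0$ for all $g$.

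The gap is in the vanishing of $\widehat{\mathcal Z}_*(g,\Phi)\cdot\widehat\xi$. You invoke a formula writing $\widehat{\mathcal Z}_t(g,\Phi)\cdot\widehat c_1(\LL)^n$ in terms of $E_{\BV}$, $E_{\BV}'$ and one-place modifications, but nothing you cite provides it. \cite{BHY} concerns pairings of special divisors with small CM cycles, not with $\widehat c_1(\LL)^n$. A Kudla-type formula for the $\LL^n$-heights of all special divisors would be a modular-height theorem in its own right; it would have to hold for the normalized admissible extensions and for the nonstandard functions at $S_2$ and $S_0(W)$, and you cannot simply quote it here. Your fallback in the last paragraph is also not correct. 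The mean-zero normalization does kill the Green-function integrals against $c_1(L)^n$. It does not make the normalized heights of the individual divisors $Z(y)$ a common constant times their degrees; these heights genuinely depend on $y$.

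What is missing is the structural argument of \cite[Proposition~7.5]{YZZ} and \cite[Section~8.1]{YZ}, which needs no explicit height formula.
\begin{enumerate}
\item Fix $a\in F_{>0}$. By the Hasse principle and Witt's theorem, every $Z(y)$ with $q(y)=a$ is a finite image of the Shimura variety of $\U(y_0^\perp)$ for one fixed $y_0$.
\item Compare the restriction of $\LL$ along the embedding given by $y$ with the one given by $y_0$. The difference of normalized heights is a finite sum over finite places $v$ of terms depending only on the $U_v$-orbit of $y_v$. There is no archimedean difference, since the embeddings are Hermitian isometries and both metrics are $-q$.
\item Sum with weights $r(g)\Phi(y)$ over $U\backslash V_{f,a}$. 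The common height multiplies $\deg_L Z_a(g,\Phi)=0$. By the local Siegel--Weil formula, the term at $v$ is multiplied by $\prod_{w\ne v}W_{a,w}(0,g_w,\Phi_w)$, which contains a zero factor at a place of $S_2$ different from $v$.
\end{enumerate}
This last step is exactly where the second degenerate place is essential.
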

\begin{proof}
Modularity of the geometric generating series gives the first assertion. The two places in $S_2$ then remove the degree and Hodge pairings, by the argument of \cite[Proposition~7.5]{YZZ} and \cite[Section~8.1]{YZ}.

\smallskip
\noindent\textbf{Step 1.} Liu's modularity theorem \cite[Theorem~3.5]{Liu1} applies to the pairing with the cycle of degree zero, using the admissible extensions in \cite[Appendix~A.3--A.4]{Qiu}. Two admissible lifts differ by arithmetic classes from the base on each component. Their pairings with $\mathcal P-\widehat\xi$ vanish by the formula for the component degrees above. Hence the height pairing depends only on the generic divisor class and is modular.

In the full generating series, the constant term is $-r(g)\Phi(0)\LL$, and its geometric degree is $-r(g)\Phi(0)\deg_L X_U$. They cancel in the degree correction in \eqref{height series definition}. The same calculation after a finite translate gives vanishing at every cusp. The geometric Siegel--Weil formula expresses the positive degree series as a multiple of $E_{\BV,*}(0,g,\Phi)$, which is zero by Proposition~\ref{two place proposition}.

\smallskip
\noindent\textbf{Step 2.} We fix a positive Fourier index $a$ for which the divisor is nonempty. The Hasse principle gives $y_0\in V(E)$ with $q(y_0)=a$. By Witt's theorem, all divisors of this norm are finite images of Shimura varieties for $\U(y_0^\perp)$ at suitable finite levels. On a fixed such variety, choose the restriction of $\LL$ under the embedding defined by $y_0$, and pull it back under the finite maps from smaller levels. The projection formula makes its normalized arithmetic self-intersection independent of the level.

Now we compare the restrictions under the other embeddings with this fixed metrized line bundle. For a fixed $v$, two vectors in the same $U_v$-orbit can first be made equal by an element of $U_v$. Their isometries can then be chosen with equal $v$-components. On a common finite cover, the resulting correspondence has identity $v$-component and preserves the relative $v$-divisible group and the integral Hodge bundle. Thus the normalized contribution from the change of the $v$-adic metric depends only on the $U_v$-orbit of $y_v$. This applies to each term in the difference of the two $n$-fold arithmetic intersections. At infinity there is no difference: the embeddings are induced by Hermitian isometries, and both metrics are the tautological metric $-q$. For $n=1$, the argument is the projection formula for line bundle degrees on the complete CM cycle.

It remains to sum over the finite orbits defining the coefficient at $a$. The common normalized height is multiplied by the geometric degree, which is zero. For a change of metric at $v$, the local Siegel--Weil formula factors the sum over the other places as
\begin{equation*}
    \prod_{w\ne v}W_{a,w}(0,g_w,\Phi_w).
\end{equation*}
These are the Whittaker functions for $\BV$. This product has a zero factor at a place of $S_2$ different from $v$. Hence every local contribution vanishes as well. The normalized Green functions have zero integral against $c_1(L)^n$, so this proves the vanishing of the Hodge pairing. Equation~\eqref{height P minus xi} follows.
\end{proof}

\subsection{Decomposition of the height series}\label{standard finite subsection}

By Proposition~\ref{cuspidal height pairing}, the height series is the pairing of the arithmetic divisor series with $\mathcal P$. To decompose this pairing, we distinguish whether a generic CM point lies on the special divisor. If it does not, the contribution consists of proper finite intersections and Green values. If it does, the pairing is the arithmetic degree of the divisor line bundle restricted to that CM component. We first define the proper terms, at infinity and then at finite places, and afterwards define the restricted degrees. Note that all terms use \eqref{arithmetic normalization}.

\subsubsection*{Archimedean proper intersections}

Let us first describe the Green value of a special divisor which avoids the CM point. We use the function of Oda--Tsuzuki \cite{OT}, in the unitary formulation of \cite[Section~4.2.2]{Qiu}. With the normalization in \cite[Section~5.2]{Guo2}, it is, for $\rho>0$,
\begin{equation}\label{Q kernel}
    Q_{n,s}(1+\rho)=\frac12\int_0^1\frac{(1-t)^s t^{n+s-1}}{(\rho+t)^{n+s}}\,\dd t.
\end{equation}
At a real place $v$, write $V(v)=W^\perp\oplus W(v)$. For $y=y_1+y_2$ of positive norm with $y_2\ne0$, the argument at the CM point is $\rho=-q_v(y_2)/q_v(y)>0$. For each fixed positive norm, we sum \eqref{Q kernel} over these vectors with coefficient $r(g,\tau)\Phi(y)$. We use the meromorphic continuation of \cite[Section~5.2]{Guo2} and take the Laurent coefficient of degree zero. The sum of these Fourier coefficients is denoted by $\mathcal M_\Phi^{(v)}(g,\tau)$.

The condition $y_2\ne0$ omits the finitely many divisors through the CM point at each Fourier index. We treat their restricted line bundles below, using the singularity at $\rho=0$ to define the metric. The other branches of the same global divisors remain in the Green sum.

\subsubsection*{Finite proper intersections}

Generally speaking, at a finite place, the proper terms are the lengths of the local intersections.

At a finite nonsplit place, we write $y=y_1+y_2$ in $V(v)_v=W_v^\perp\perp W(v)_v$. For $y_2\ne0$, let $m_{\Phi_v}(y)\log N_v$ be the normalized proper intersection with the local horizontal Cartier divisor, with the multiplicities in \eqref{arithmetic normalization}. At $S_0(W)$ its defining homomorphism is $\lambda y$; elsewhere it is the homomorphism in the standard lifting description. For $y_1\in W_v^\perp$, we also put
\begin{equation*}
    m^0_{\Phi_v}(y_1)=\begin{cases}\text{normalized proper length},&\text{if the intersection is proper},\\0,&\text{if it lifts over the CM component}.\end{cases}
\end{equation*}
At a split place, $m^0_{\Phi_v}$ includes both places of $E$ above $v$. Both multiplicities omit $\log N_v$ and are linear in $\Phi_v$. Similar to the nonsplit case, when the homomorphism lifts over the whole CM component, we set $m^0_{\Phi_v}=0$.

For a finite nonsplit place, define the corresponding global sum by
\begin{equation*}
    \mathcal M_\Phi^{(v)}(g,\tau)
    =\sum_{y\in V(v)(E)\setminus W^\perp(E)}m_{r(\tau_v)\Phi_v}(y_v)r(g^v,\tau^v)\Phi^v(y).
\end{equation*}
The local multiplicity is computed at $g_f=1$, and the formula holds on $K_f$. At a split place the $p$-divisible group is ordinary. Its decomposition into \'etale and connected parts forces the $W_v$-component of a special homomorphism to vanish, as in \cite[Section~5.3]{Guo2}. Hence its proper finite terms are included in $m^0_{\Phi_v}$.

Now, we pass from these local multiplicities to the global divisor series by the finite maps in \cite[Section~4.3]{Guo2} and the formal uniformization in \cite[Section~6.1.2 and Proposition~6.1.5]{Qiu}. To do this, we work on the RSZ integral model at a sufficiently small level, where the special homomorphism and its level data are fixed. The local divisor is given by the horizontal Cartier closure of the corresponding divisor on the generic fiber. Since the torus fixes $W^\perp$ pointwise, the projection formula, together with the finite degrees and stabilizer weights, gives
\begin{equation}\label{perpendicular proper global sum}
    \log N_v\sum_{y_1\in W^\perp(E)\setminus\{0\}}m^0_{\Phi_v}(y_{1,v})r(g^v)\Phi^v(y_1,0).
\end{equation}
Here $g_f=1$; ordinary uniformization gives the same formula at a split place. The projection formula gives the weights of \eqref{arithmetic normalization}, as in \cite[Propositions~8.3 and~8.5]{YZ}.

To compare the finite proper terms with the derivative series, for $y_2\ne0$, we put 
\begin{equation}\label{general proper difference}
    \bar k_{\Phi_v}(1,y)=2k_{\Phi_v}(1,y)-m_{\Phi_v}(y)\log N_v.
\end{equation}
The calculations in Section~\ref{local intersection subsection} give a Schwartz extension at every finite nonsplit place. We use its restriction to $W_v^\perp$ when comparing the terms indexed by $x\in W^\perp(E)$ in Proposition~\ref{direct comparison proposition}.

\subsubsection*{Divisors through the generic CM point}

The remaining terms come from special divisors that contain a generic point of $P$. The point determined by $W$ lies on the hyperplane $\langle z,x\rangle=0$ exactly when $x\in W^\perp$. For such a nonzero $x$, the intersection is computed by restricting the metrized divisor line bundle to the CM component, rather than by a finite length.

To compute this degree, we fix the homomorphism $x$ and a CM component, and work on an \'etale neighborhood of a point on this component in the RSZ integral model, as in Section~\ref{integral model subsection}. Let $D_x$ be its horizontal lifting divisor. At $S_0(W)$ the defining homomorphism is $\lambda x$; at the other places it is $x$. At $v\in S_2$, we first consider the divisor defined by lifting $x$. Proposition~\ref{split arithmetic correction} then computes the correction when $x/\varpi_v$ is required to lift instead.

The reduced generic branch of $D_x$ through the chosen point is unique. Since the local ring of $\mathcal X_U$ is regular, its closure $C_x$ is Cartier. The remaining divisor $R_x=D_x-C_x$ meets the CM component properly. Thus the contribution of $D_x$ is
\begin{equation}\label{restricted divisor degree}
    D_x=C_x+R_x,\qquad i(x,\mathcal P)=\widehat{\deg}\bigl(\overline{\mathcal O(C_x)}|_{\mathcal P}\bigr)+(R_x,\mathcal P)_{\mathrm{fin}}.
\end{equation}
The metric on $\mathcal O(C_x)$ is obtained by subtracting the logarithmic singularity of the Green function along $C_x$, as in \cite[Lemma~5.1]{Guo2}. Both terms include the finite maps and multiplicities of \eqref{arithmetic normalization}. 

Note that $R_x$ is part of the divisor of a homomorphism that lifts over the whole CM component. Its finite intersections therefore belong to $i(x,\mathcal P)$, whereas $m^0_{\Phi_v}$ concerns homomorphisms with finite lifting locus. The following formula includes both contributions and the additional correction at $S_2$.

\begin{theorem}\label{height decomposition theorem}
For the datum of Section~\ref{split data subsection} and $g_f\in K_f$, the height series satisfies
\begin{equation}\label{height decomposition}
    \begin{aligned}
    \mathcal H(g,\Phi)={}&2\sum_{v\mid\infty}\avint_{[E^1]}\mathcal M_\Phi^{(v)}(g,\tau)\,\dd\tau
    +\sum_{\substack{v\nmid\infty\\v\ \mathrm{nonsplit}}}\log N_v\avint_{[E^1]}\mathcal M_\Phi^{(v)}(g,\tau)\,\dd\tau\\
    &+\sum_{0\ne x\in W^\perp(E)}i(x,\mathcal P)r(g)\Phi(x,0)\\
    &+\sum_{v\nmid\infty}\log N_v\sum_{0\ne x\in W^\perp(E)}m^0_{\Phi_v}(x)r(g^v)\Phi^v(x,0)\\
    &-2\sum_{v\in S_2}\log N_v\sum_{0\ne x\in W^\perp(E)}
    1_{\varpi_v\Lambda_v^\perp}(x)1_{\varpi_v^2\mathcal O_{F_v}^\times}(q(x))r(g^v)\Phi^v(x,0).
    \end{aligned}
\end{equation}
Moreover, $\mathcal H(g,\Phi)=\widehat{\mathcal Z}_*(g,\Phi)\cdot\mathcal P$ on these finite components. The sums over finite places in \eqref{height decomposition} are finite at each positive Fourier index.
\end{theorem}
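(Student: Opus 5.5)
By Proposition~\ref{cuspidal height pairing}, $\mathcal H(g,\Phi)=\widehat{\mathcal Z}_*(g,\Phi)\cdot\mathcal P$ for $g_f\in K_f$, so it suffices to decompose this single pairing one Fourier index $t\in F_{>0}$ at a time; this also gives the second assertion. Fix $t$ and work at $g_f=1$ on the RSZ model at a level small enough that every special homomorphism of norm $t$ and its level structure are defined. The equality then extends to $g_f\in K_f$ because all finite data are $K_f$-invariant. The coefficient $\widehat{\mathcal Z}_t(g,\Phi)\cdot\mathcal P$ is a sum over $y\in U\backslash V_{f,t}$ of $r(g)\Phi(y)\,\widehat{\mathcal Z}(y)\cdot\mathcal P$. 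I would split this sum according to whether the generic CM point lies on $Z(y)$. By the hyperplane description, this happens exactly when the corresponding global vector lies in $W^\perp(E)$.

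\textbf{Proper part.} For divisors not through the generic CM point, \eqref{arithmetic normalization} writes the pairing as finite lengths plus Green values. For the Green values, use complex uniformization at each real $v$: the points of $\mathcal P$ are parametrized by $[E^1]$, and the special divisors meeting them are indexed by $y\in V(v)(E)$ with $y_2\neq0$. The Green function at the CM point is $Q_{n,s}(1+\rho)$ with $\rho=-q_v(y_2)/q_v(y)$, taken at the constant term after continuation. Averaging over the torus gives $2\avint_{[E^1]}\mathcal M^{(v)}_\Phi\,\dd\tau$, where the factor $2$ comes from the normalization of $\vol([E^1])$ and the two embeddings of $E$ over $v$.

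For the finite lengths at a nonsplit $v$, use the formal uniformization of \cite[Proposition~6.1.5]{Qiu}. The supersingular locus meeting $\mathcal P$ is uniformized by $V(v)$, with the CM points forming an $E^1$-orbit. Summing local lengths over $V(v)(E)\setminus W^\perp(E)$ then gives $\log N_v\avint\mathcal M^{(v)}_\Phi$. At split $v$, ordinarity forces the $W_v$-component of a special homomorphism to vanish, so these contributions are indexed by $W^\perp(E)$. The remaining proper finite intersections of the $x\in W^\perp$ divisors on other branches, whose homomorphisms have finite lifting locus, give \eqref{perpendicular proper global sum}, which is the $m^0_{\Phi_v}$ line.

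\textbf{Improper part.} For $0\neq x\in W^\perp(E)$, decompose $D_x=C_x+R_x$ as in \eqref{restricted divisor degree}. The contribution is $i(x,\mathcal P)$, with the admissible Green function giving the metric on $\mathcal O(C_x)$. The torus acts trivially on $W^\perp$, so no average appears. At $S_0(W)$ the horizontal closure is the lifting divisor of $\lambda x$ by Lemma~\ref{unit horizontal divisor}, and this is consistent with the definition of $i(x,\mathcal P)$.

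\textbf{Correction at $S_2$.} The last line comes from the choice \eqref{degenerate phi}. For $x$ in the support, the actual divisor requires $x/\varpi_v$ to lift rather than $x$. Applying Proposition~\ref{split arithmetic correction} to each $x$ with $x\in\varpi_v\Lambda_v^\perp$ and $q(x)\in\varpi_v^2\mathcal O_{F_v}^\times$ gives the correction $-2\log N_v$ times that indicator, mirroring \eqref{split zero derivative}.

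\textbf{Finiteness and main obstacle.} At each Fourier index only finitely many places contribute: away from a finite set, $\Phi_v=1_{\Lambda_v}$, $c_v=0$, and the lengths vanish for norm reasons. I expect the main obstacle to be the bookkeeping of multiplicities. One must check that the stabilizer weights, the toric degree $\delta_Z$, $[\widetilde E:E]$, and the orbit counts $U\backslash V_{f,t}$ combine, through the projection formula, into exactly the torus averages $\avint_{[E^1]}$ with the stated constants. The plan is to do this one finite place at a time, following \cite[Propositions~8.3 and~8.5]{YZ}.
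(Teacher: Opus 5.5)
Your outline follows the paper's route: identifying $\mathcal H$ with $\widehat{\mathcal Z}_*\cdot\mathcal P$ via Proposition~\ref{cuspidal height pairing}, separating vectors with nonzero $W(v)$-component from those in $W^\perp$, using the uniformizations of \cite{Qiu} and \cite{Guo2}, writing $D_x=C_x+R_x$, and applying the $S_2$ correction of Proposition~\ref{split arithmetic correction}. However, it omits one step that the paper's proof needs.

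You assert that the Green function at the CM point is $Q_{n,s}(1+\rho)$, taken at the constant term after continuation. That is not the Green function of $\widehat{\mathcal Z}_t(g,\Phi)$. The height series is built from the \emph{normalized $\LL$-admissible} extensions, whose Green functions have mean zero on each component. By contrast, $\mathcal M^{(v)}_\Phi$ is defined with the Oda--Tsuzuki automorphic kernel \eqref{Q kernel}. The regularized value \eqref{central limit}, used later for $i(x,\mathcal P)$ in Proposition~\ref{restricted degree proposition}, also comes from this automorphic kernel.

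These two Green functions differ. Summing the automorphic Green values together with the finite lengths therefore computes $\widehat{\mathcal Z}_*(g,\Phi)\cdot\mathcal P$ only up to a correction term. The paper closes this by citing \cite[Proposition~5.4]{Guo2}. Replacing the automorphic Green functions by the normalized admissible ones changes the pairing by $-dE_{\BV,*}(0,g,\Phi)/n$. This vanishes because the zero Siegel sections at $S_2$ kill $E_{\BV}$ (Proposition~\ref{two place proposition}).

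Your sentence that the admissible Green function gives the metric on $\mathcal O(C_x)$ has the same problem. It conflicts with later evaluating $i(x,\mathcal P)$ through \eqref{central limit}, unless this correction is accounted for. You should add the Green-function comparison and its vanishing explicitly. Without it, your archimedean term and your $i(x,\mathcal P)$ term are each off from the admissible pairing by a contribution whose total is a multiple of $E_{\BV,*}(0,g,\Phi)$.

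A minor point: the factor $2$ in front of the archimedean term comes only from the two conjugate embeddings $\widetilde\sigma$ above a real place in \eqref{arithmetic normalization}. The volume $\vol([E^1])$ is already absorbed into $\avint_{[E^1]}$ and does not contribute to that factor.
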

\begin{proof}
Proposition~\ref{cuspidal height pairing} and \eqref{height P minus xi} identify $\mathcal H$ with the CM pairing. We decompose this pairing before evaluating the local lengths.

We apply the local decomposition of \cite[Sections~4.3 and~5.1]{Guo2}, using the uniformization of \cite[Section~6.1.2 and Proposition~6.1.5]{Qiu}. Homomorphisms with nonzero $W(v)$-component contribute $\mathcal M_\Phi^{(v)}$. The conjugate archimedean embeddings give the factor two, and the finite normalization gives $\log N_v$. Homomorphisms in $W_v^\perp$ with finite lifting locus contribute \eqref{perpendicular proper global sum}; those that lift over the CM component contribute \eqref{restricted divisor degree}, including $R_x$. Thus the intersection with the full lifting divisors is the sum of these contributions.

Changing the automorphic Green functions to normalized admissible functions contributes $-dE_{\BV,*}(0,g,\Phi)/n$ by \cite[Proposition~5.4]{Guo2}, hence zero. At $v\in S_2$, the first summand of \eqref{degenerate phi} imposes lifting of the divided homomorphism. Proposition~\ref{split arithmetic correction} gives its change from the full lifting divisor, namely the last line of \eqref{height decomposition}.
\end{proof}

The terms denoted by $\mathcal N_\phi^{(v)}$ in \cite[Proposition~8.3]{YZ} are included here in $m^0_{\Phi_v}$ for finite lifting loci and in $(R_x,\mathcal P)_{\mathrm{fin}}$ for homomorphisms that lift over the CM component. The latter are part of $i(x,\mathcal P)$.

\subsection{Local intersection numbers}\label{local intersection subsection}

This subsection computes the proper terms $\mathcal M_\Phi^{(v)}$ and $m^0_{\Phi_v}$ in Theorem~\ref{height decomposition theorem}. Proposition~\ref{known local intersections} recalls the archimedean calculation and the finite formulas for the embeddings used in \cite{Guo2,Qiu}. For a general $W$, Lemma~\ref{local product stabilization} identifies the actual formal modules, their polarizations, and the homomorphisms whose lifting lengths occur in the pairing. We then apply Lemma~\ref{canonical to quasi length} at inert and ramified places, and the ordinary deformation calculation at split places. The case $r_v=0$ at a ramified place uses the divisor $\mathcal Y(x)$ and is treated at the end.

\subsubsection*{The previously computed local terms}

The archimedean terms use the Green function $Q_{n,s}$ in \eqref{Q kernel}. At finite places, the CM datum in \cite[Sections~5.1.2--5.1.4]{Qiu} has $r_v=0$ at inert places and $r_v=1$ at ramified places. At a ramified place, the intersection of the one-dimensional subspace $E_ve_v^{(0)}$ with the $\varpi_{E_v}$-modular plane is $\varpi_{E_v}\mathcal O_{E_v}e_v^{(0)}$, where $e_v^{(0)}$ has unit norm; see \cite[Remark~6.1.8]{Qiu}. Both cases have $c_v=0$ in \eqref{local conductor definition}.

In the finite formulas below, we write $y=y_1+y_2$ with $y_1\in W_v^\perp$ and $0\ne y_2\in W(v)_v$, as in Section~\ref{standard finite subsection}.

\begin{proposition}\label{known local intersections}
\begin{enumerate}[(1)]
\item At a real place, the proper terms agree:
\begin{equation}\label{archimedean proper comparison}
    \overline{\mathcal K}_\Phi^{(v)}(g,\tau)=\mathcal M_\Phi^{(v)}(g,\tau).
\end{equation}
The value needed for the restricted divisor degree is
\begin{equation}\label{central limit}
    \lim_{\rho\to0^+}\left(Q_{n,0}(1+\rho)+\frac12\log\rho\right)=-\frac12H_{n-1},
\end{equation}
where $H_{n-1}=\sum_{j=1}^{n-1}j^{-1}$ and $H_0=0$.
\item At an inert place with $r_v=v(d_v)=0$, the standard CM datum gives
\begin{equation*}
    m_{\Phi_v}(y_1+y_2)=
       \begin{cases}
          (v(q(y_2))+1)1_{\Lambda_v^\perp}(y_1),&v(q(y_2))\ge0,\\
          0,&v(q(y_2))<0,
       \end{cases}
    \qquad m^0_{\Phi_v}=0.
\end{equation*}
At a split place with $r_v=0$ and $v\notin S_2$, the proper contribution on $W_v^\perp$ is zero. The ordinary decomposition described in Section~\ref{standard finite subsection} also excludes a proper term with nonzero $W_v$-component.
\item At a ramified place with the standard CM datum $r_v=1$, the proper multiplicity on $W_v^\perp$ is zero. For $y_2\ne0$,
\begin{equation*}
    m_{\Phi_v}(y_1+y_2)=
       \begin{cases}
           v(q(y_2))1_{\Lambda_v^\perp}(y_1),&v(q(y_2))\ge0,\\
           0,&v(q(y_2))<0.
       \end{cases}
\end{equation*}
\end{enumerate}
\end{proposition}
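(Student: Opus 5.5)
This proposition is a recollection of local computations already carried out in the literature, so the plan is to reduce each item to the corresponding known result. The only new work is to check that our normalizations, and our description of the CM datum through $r_v$ and $c_v$, match those sources.

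For (1) I would proceed as follows.
\begin{enumerate}[(a)]
\item Compare the archimedean kernel $k_{v,u}$ in \eqref{archimedean k} with the Oda--Tsuzuki function $Q_{n,s}$ in \eqref{Q kernel}. Substituting $b=(\rho+t)/t$ type variables in \eqref{archimedean k}, with $\rho=-q_v(y_2)/q_v(y)$, should give $k_{v,u}(y)=Q_{n,u}(1+\rho)$ up to terms whose constant term at $u=0$ vanishes. This is exactly the identification made in \cite[Section~5.2]{Guo2} and \cite[Section~4.2.2]{Qiu}.
\item Take $\CT_{u=0}$ of each Fourier coefficient on both sides, sum over $y$ with the same coefficients $r(g,\tau)\Phi(y)$, and use the same meromorphic continuation. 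This gives \eqref{archimedean proper comparison}.
\item For \eqref{central limit}, set $s=0$ in \eqref{Q kernel}, so that $Q_{n,0}(1+\rho)=\tfrac12\int_0^1 t^{n-1}(\rho+t)^{-n}\dd t$.
\item Write $t^{n-1}/(\rho+t)^{n}=\bigl(1-\rho/(\rho+t)\bigr)^{n-1}/(\rho+t)$ and expand binomially. The $j=0$ term integrates to $\log(1+\rho)-\log\rho$.
\item Each term with $j\ge1$ tends, as $\rho\to0^+$, to $(-1)^j\binom{n-1}{j}/j$.
\item Use the identity $\sum_{j\ge1}(-1)^{j+1}\binom{n-1}{j}/j=H_{n-1}$. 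Together with (d) and (e) this yields $-\tfrac12H_{n-1}$ after adding $\tfrac12\log\rho$.
\end{enumerate}

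For (2), with $r_v=v(d_v)=0$ at an inert place, Lemma~\ref{all place line types} gives $\Lambda_v=M_v\perp\Lambda_v^\perp$ with $M_v$ self-dual. This is the CM datum of \cite[Sections~5.1.2--5.1.3]{Qiu}. There the special homomorphism splits into a component in $\Lambda_v^\perp$, which must be integral, and a component in $W(v)_v$. The latter is a homomorphism between canonical lifts, and Gross's canonical-lift length gives $(v(q(y_2))+1)/2$ per embedding. I would then account for the factor two in the conjugate pair to match our normalization \eqref{arithmetic normalization}. For $y_2=0$ the homomorphism lifts over the whole CM component, so $m^0_{\Phi_v}=0$ by definition.

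The split-place statement follows from the ordinary decomposition recalled in Section~\ref{standard finite subsection}. Any $y_1\in\Lambda_v^\perp$ lifts over the ordinary CM component, since canonical ordinary lifts lift all endomorphisms by Serre--Tate. So there is no finite lifting locus and the proper term is zero.

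For (3), with $r_v=1$, the datum is that of \cite[Section~5.1.4, Remark~6.1.8]{Qiu}, where $M_v=\varpi_{E_v}\mathcal O_{E_v}e_v^{(0)}$. I would quote their length formula $v(q(y_2))$, which comes from the ramified canonical-lift computation. The step I expect to need the most care is exactly this normalization check: matching Qiu's $\mathcal O_{E_v}$-valued norms and lattice scaling with our $q$, $\Lambda_v$ and the multiplicities in \eqref{arithmetic normalization}. In particular, the shift between $v(q(y_2))+1$ in the inert case and $v(q(y_2))$ in the ramified case should come out of the identification $r_v=1$ rather than being imposed. I would verify it by comparing against $2k_{\Phi_v}$ from Proposition~\ref{nonzero Whittaker coefficients} at $r_v=1$, which must differ from $m_{\Phi_v}\log N_v$ by a Schwartz function.
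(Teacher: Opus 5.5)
Your derivation of \eqref{central limit} is correct. The binomial expansion together with $\sum_{j\ge1}(-1)^{j+1}\binom{n-1}{j}/j=H_{n-1}$ is more self-contained than the paper, which quotes the closed form of $Q_{n,0}(1+\rho)$ from \cite[(5.2.3)]{Guo2}. For (2)--(3) you follow essentially the paper's route: reduce via Qiu's uniformization to canonical-lift lengths. The genuine gap is in your proof of \eqref{archimedean proper comparison}.

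The correct substitution in \eqref{Q kernel} is $b=1/t$. It gives $Q_{n,u}(1+\rho)=\frac12\int_1^\infty(1-b^{-1})^u\,b^{-1}(1+\rho b)^{-n-u}\,\dd b$. So $k_{v,u}(y)$ and $Q_{n,u}(1+\rho)$ agree at $u=0$, but they differ at first order in $u$, through the factor $\Gamma(n+u)(4\pi)^{-u}/\Gamma(n)$ and the factor $(1-b^{-1})^u$. Pointwise agreement at $u=0$ is not enough, because the lattice sums at a fixed norm $t$ do not converge at $u=0$. Both kernels decay like $\rho^{-n-u}$, while the number of $y$ of norm $t$ in the support with $\rho(y)\le R$ is of order $R^n$. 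Hence the continued sums have, in general, a pole at $u=0$. Their constant terms then pick up the first-order discrepancy of the kernels multiplied by the residue, so you cannot take $\CT_{u=0}$ term by term. This discrepancy is what \cite[Proposition~5.5]{Guo2} computes: $\overline{\mathcal K}_\Phi^{(v)}-\mathcal M_\Phi^{(v)}$ is a multiple of $E_{\BV,*}(0,g,r(\tau)\Phi)$.

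The missing step is to kill this Eisenstein term. The operator $r(\tau)$ commutes with the Weil representation and fixes the origin. Hence $r(\tau)\Phi$ still satisfies \eqref{zero local section} at both places of $S_2$, and Proposition~\ref{two place proposition} gives $E_{\BV,*}(0,g,r(\tau)\Phi)=0$. Your argument never uses the choice \eqref{degenerate phi}. As written, it would yield \eqref{archimedean proper comparison} for every $\Phi$, whereas in general the two sides differ by this term.

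There are also two smaller points.
\begin{enumerate}[(i)]
\item In (3), your proposed check against $2k_{\Phi_v}$ cannot detect the shift you want to confirm. Replacing $v(q(y_2))$ by $v(q(y_2))+1$ changes $m_{\Phi_v}$ by $1_{\Lambda_v^\perp}(y_1)1_{\mathcal O_{F_v}}(q(y_2))$, which is itself Schwartz on $V(v)_v$. The shift actually comes from the norm $-2\varpi_{E_v}^2\operatorname{Nrd}(h)$ in the Serre construction. This gives $v(q(h))=1+v_B(h)$, to be compared with the normalized canonical lifting length $v_B(h)+1$.
\item In (2), the factor two comes from the residue field of $E_v$ having $N_v^2$ elements, not from a conjugate pair.
\end{enumerate}
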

\begin{proof}
The archimedean assertions follow from \cite[Section~5.2]{Guo2}. More precisely, equation~(5.2.3) there gives
\begin{equation*}
    Q_{n,0}(1+\rho)=\frac12\left(\log\frac{1+\rho}{\rho}
       -\sum_{j=1}^{n-1}\frac{1}{j(1+\rho)^j}\right),
\end{equation*}
which proves \eqref{central limit}. Proposition~5.5 of that paper expresses the difference of the two sums as a multiple of $E_{\BV,*}(0,g,r(\tau)\Phi)$. The action by $\tau$ commutes with the Weil representation and fixes the origin. Proposition~\ref{two place proposition} therefore makes this Eisenstein series zero, giving \eqref{archimedean proper comparison}.

At finite places, the formal uniformization and product description in \cite[Section~6.1.2 and Proposition~6.1.5]{Qiu} reduce the standard CM datum to canonical formal modules. The canonical lifting formula \cite[Theorem~1.4]{Wew1}, with the normalization of \cite[Section~5.3]{Guo2}, gives the formulas of lengths. The split assertion is the ordinary calculation in that subsection. In the ramified case, the norm of the homomorphism in the Serre construction has the extra factor $\varpi_{E_v}^2$, giving $v(q(y_2))$ in place of $v(q(y_2))+1$.
\end{proof}

For a general $W$, the case $c_v>0$ requires a quasi-canonical lift. Moreover, the nonzero cosets in Proposition~\ref{lattice coset decomposition} may contribute to $m^0_{\Phi_v}$. The following discussion treats both changes.

\subsubsection*{Lifting homomorphisms on the CM deformation}

We now extend the computations above to general $W$. For this purpose, we first recall the lifting formula for quasi-canonical formal modules and then identify the formal module associated with $W$.

At a nonsplit place $v$, write $B/F_v$ for the division quaternion algebra and normalize valuation $v_B$ by assigning value one to a quaternionic uniformizer. We fix a canonical lift $G_0$ with multiplication by $\mathcal O_{E_v}$, which is a Lubin--Tate formal $\mathcal O_{E_v}$-module of height one. For $c\ge0$, let $G_c$ be a quasi-canonical lift with endomorphism order $\mathcal O_{F_v}+\varpi_v^c\mathcal O_{E_v}$, and choose a minimal cyclic isogeny $\psi_c:G_0\to G_c$ of relative degree $N_v^c$. Note that we work over a common finite extension $R$ of $\mathcal O_{\breve F_v}$, and write $e(R/F_v)=v_R(\varpi_v)$. The Tate lattice of $G_c$ is $\mathcal O_{E_v}+\varpi_v^{-c}\mathcal O_{F_v}$, whose maximal $\mathcal O_{E_v}$-stable sublattice is $\mathcal O_{E_v}$. Here we call the lattice in the rational Tate module by Tate lattice. Hence \cite[Proposition~1.1]{Wew2} identifies the reductions of the homomorphisms lifting over $R$ with
\begin{equation*}
    H_c=\overline\psi_c\mathcal O_{E_v}\subset\mathcal O_B,\qquad v_B(\overline\psi_c)=c.
\end{equation*}
For $h\in B\setminus H_c$, define the distance function
\begin{equation}\label{distance to Hom}
    \ell_c(h)=\max_{a\in H_c}v_B(h-a).
\end{equation}
For $h\in\mathcal O_B$, let $I_h\subset R$ be the ideal defining the locus over which $h$ lifts to a homomorphism, which we call the lifting ideal.

The normalized multiplicity is $2\length_R(R/I_h)/e(R/F_v)$. At an inert place, the factor two comes from the residue cardinality $N_v^2$ of $E_v$. At a ramified place, the two Serre embeddings give two CM branches of equal weight $1/2$; see \cite[(6.3.1)--(6.3.2)]{Chen} and \cite[Section~5.3]{Guo2}. Transporting the homomorphism between the two embeddings preserves $H_c$ and $\ell_c$, so the two lengths agree. Their average is
\begin{equation*}
    \frac12\sum_{\pm}\frac{\length_R(R/I_h)}{e(R/E_v)}\log N_v=\frac{2\length_R(R/I_h)}{e(R/F_v)}\log N_v.
\end{equation*}
The equality uses $e(E_v/F_v)=2$. This normalization is unchanged by a further finite extension.

\begin{lemma}\label{canonical to quasi length}
For $h\in\mathcal O_B\setminus H_c$, with $\ell=\ell_c(h)$, one has
\begin{equation}\label{uniform Hom length}
    \frac{2\length_R(R/I_h)}{e(R/F_v)}=\begin{cases}\displaystyle\frac{2N_v^{1-c}(N_v^{\ell+1}-1)}{N_v^2-1},&0\le\ell<c,\quad v\text{ inert},\\[2mm]\displaystyle\frac{N_v^{-c}(N_v^{\ell+1}-1)}{N_v-1},&0\le\ell<c,\quad v\text{ ramified},\\[2mm]\ell+1-a_v(c),&\ell\ge c.\end{cases}
\end{equation}
A quasi-homomorphism outside $\mathcal O_B$ has empty lifting locus, and the elements of $H_c$ lift over $R$.
\end{lemma}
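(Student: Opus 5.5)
The plan is to reduce the lemma to Wewers' description of the filtration of $\operatorname{Hom}(G_0,G_c)$ modulo powers of the maximal ideal of $R$, and then to obtain \eqref{uniform Hom length} by summing a step function.

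\textbf{Step 1: the filtration.} Write $\pi_R$ for a uniformizer of $R$. For $m\ge1$ put
\begin{equation*}
    H_c(m)=\operatorname{Hom}\bigl(G_0\otimes R/\pi_R^m,\;G_c\otimes R/\pi_R^m\bigr)\subset\mathcal O_B .
\end{equation*}
By the rigidity of formal modules these groups form a decreasing chain whose intersection is $H_c$. By \cite[Theorem~1.4]{Wew1} together with \cite[Proposition~1.1]{Wew2}, and after composing with $\psi_c$ (which puts us in Gross's canonical setting), I expect each member of the chain to have the form
\begin{equation*}
    H_c(m)=H_c+\Pi^{k(m)}\mathcal O_B .
\end{equation*}
Here $\Pi$ is a quaternionic uniformizer and $k(m)$ is an explicit nondecreasing step function of $m$.

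\textbf{Step 2: the jumps of $k(m)$.} The jumps are governed by the ramification of the quasi-canonical extension. The field of definition $M_c$ of $G_c$ satisfies
\begin{equation*}
    e(M_c/\breve E_v)=\begin{cases} N_v^{c-1}(N_v+1),& v\text{ inert},\\ N_v^{c}, & v\text{ ramified},\end{cases}
\end{equation*}
for $c\ge1$ (so $N_v^c$ in the ramified case, taken relative to $\breve E_v$). Consequently the value $k(m)=j$ persists for a number of steps in $m$ proportional to $e(R/F_v)$ divided by the ramification index of the relevant intermediate field. This gives two regimes.
\begin{enumerate}[(1)]
\item For $j<c$, where the distance to $H_c$ is smaller than the valuation of $\overline\psi_c$, the step lengths scale like $N_v^{j-c}$. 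The scale is $N_v^{2j}$ relative to the unramified-over-$E$ normalization at inert places, where $\Pi^2=\varpi_v$ and residue degree $2$ enter.
\item For $j\ge c$ the steps are uniform, exactly as in Gross's canonical lifting formula for $c=0$.
\end{enumerate}

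\textbf{Step 3: the length.} We have
\begin{equation*}
    \length_R(R/I_h)=\#\{m\ge1 : h\in H_c(m)\},
\end{equation*}
and $h\in H_c(m)$ if and only if $\ell_c(h)\ge k(m)$. The length is therefore the sum of the step lengths for $j=0,\dots,\ell$.
\begin{enumerate}[(1)]
\item When $\ell<c$, a geometric series gives $N_v^{1-c}(N_v^{\ell+1}-1)/(N_v^2-1)$ at inert places and $N_v^{-c}(N_v^{\ell+1}-1)/(N_v-1)$ at ramified places.
\item When $\ell\ge c$, the full series up to $c-1$ plus the uniform part equals $\ell+1-c$ plus a geometric tail. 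This tail reproduces $a_v(c)$ through the identity
\begin{equation*}
    \sum_{j<c}(\text{step}_j)-c=-a_v(c)+\ell+1-(\ell+1-c)\text{ terms},
\end{equation*}
which I would check directly against the definition of $a_v$.
\end{enumerate}
As a consistency check, these sums coincide with the Whittaker derivatives in Proposition~\ref{nonzero Whittaker coefficients}, which is the intended comparison. The assertions about quasi-homomorphisms are immediate: an element outside $\mathcal O_B$ does not even lie in $H_c(1)$, since $H_c(1)=\mathcal O_B$ is the endomorphism ring of the special fiber. Elements of $H_c$ lift over $R$ by the definition of $H_c$.

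\textbf{Main obstacle.} The hard part is Step 2: obtaining the precise jump function $k(m)$ for homomorphisms $G_0\to G_c$ rather than for endomorphisms. I would first try to deduce it from Gross's result for endomorphisms of $G_0$ via the factorization $h=\overline\psi_c\circ h_0$ when $v_B(h)\ge c$. For $\ell<c$ I would use the dual isogeny $\psi_c^\vee$ instead, and track how it shifts the filtration by the ramification of $M_c/\breve E_v$. The ramified case needs care with the two Serre branches. Since the text already notes that transport between the branches preserves $\ell_c$, I would compute on one branch only.
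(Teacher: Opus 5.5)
\textbf{The gap is in Step~2.} It is the entire content of the lemma, and your sources do not supply it. \cite[Theorem~1.4]{Wew1} treats only the canonical case $c=0$, and \cite[Proposition~1.1]{Wew2} only identifies the homomorphisms lifting over all of $R$, i.e.\ $H_c$. Neither describes $\operatorname{Hom}(G_0,G_c)$ modulo $\pi_R^m$. So both the shape $H_c+\Pi^{k(m)}\mathcal O_B$ and the jump function $k(m)$ are asserted rather than derived.

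The reduction you propose cannot recover them exactly. Suppose $\ell=\ell_c(h)\ge c$. Subtract an element of $H_c$ and write $h=\overline\psi_c h_0$ with $\ell_0(h_0)=\ell-c$. A lift of $h_0$ followed by $\psi_c$ lifts $h$. A lift of $h$ followed by $\psi_c^\vee$ lifts $\varpi_v^c h_0$, which has $\ell_0=\ell+c$. The canonical formula therefore only gives
\[
\ell-c+1\le\frac{2\length_R(R/I_h)}{e(R/F_v)}\le\ell+c+1 .
\]
The true value is $\ell+1-a_v(c)=\ell-c+1+(c-a_v(c))$. For $c\ge1$ it lies strictly inside this range, since $0<c-a_v(c)<2c$. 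The excess $c-a_v(c)$ reflects the ramification of the quasi-canonical locus, which the factorization through $\psi_c$ does not see. Your dual-isogeny argument for $\ell<c$ has the same defect: it yields only an upper bound, not the fractional values $N_v^{1-c}\cdots$ in the statement.

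\textbf{The missing input} is a lifting formula for homomorphisms between quasi-canonical lifts, and this is exactly what the paper cites. It uses \cite[Theorem~2.4]{Mih} with $i=0$, $j=c$, together with \cite[Proposition~2.1]{Mih}, which identifies its Hom lattice with $H_c$ and hence its distance with $\ell_c$. Put $e_c=1$ (inert, $c=0$), $e_c=(N_v+1)N_v^{c-1}$ (inert, $c>0$), and $e_c=2N_v^c$ (ramified). The cited formula then gives
\[
\length_R(R/I_h)/e(R/F_v)=e_c^{-1}\sum_{j=0}^{\ell}N_v^j \quad\text{for }\ell<c,
\]
\[
\length_R(R/I_h)/e(R/F_v)=e_c^{-1}\sum_{j=0}^{c-1}N_v^j+\tfrac{\ell+1-c}{2}\quad\text{for }\ell\ge c.
\]
In your language, the normalized step at level $j<c$ is $N_v^j/e_c$, not something scaling like $N_v^{2j}$ at inert places, and it is $1/2$ for $j\ge c$.

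With this input your Step~3 goes through. However, the identity you wrote in Step~3(2) is not meaningful as stated. What is needed is
\[
2e_c^{-1}\sum_{j=0}^{c-1}N_v^j=c-a_v(c),
\]
a direct check from the definition of $a_v$, valid also at $c=0$. Your treatment of quasi-homomorphisms outside $\mathcal O_B$ and of elements of $H_c$ is fine.
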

\begin{proof}
This is the lifting formula of \cite[Theorem~2.4]{Mih} with $i=0$ and $j=c$. Proposition~2.1 there identifies its Hom lattice with $H_c$, hence its distance with \eqref{distance to Hom}; the lifting colength is denoted there by $\alpha+1$.

In the formula of Mihatsch, the ramification factor is
\begin{equation*}
    e_c=\begin{cases}
        1,&v\text{ inert},\ c=0,\\
        (N_v+1)N_v^{c-1},&v\text{ inert},\ c>0,\\
        2N_v^c,&v\text{ ramified}.
    \end{cases}
\end{equation*}
The specialization of the cited formula, divided by $e(R/F_v)$, is
\begin{equation*}
    \frac{\length_R(R/I_h)}{e(R/F_v)}=
    \begin{cases}
        \displaystyle e_c^{-1}\sum_{j=0}^{\ell}N_v^j,&\ell<c,\\[2mm]
        \displaystyle e_c^{-1}\sum_{j=0}^{c-1}N_v^j+\frac{\ell+1-c}{2},&\ell\ge c.
    \end{cases}
\end{equation*}
An empty sum is zero. This formula includes $\ell=0$, for which \cite[Lemma~2.3]{Mih} gives the intersection of the two quasi-canonical lifts. At $c=0$, it is the canonical lifting formula of \cite[Theorem~1.4]{Wew1}. We also refer to \cite[Proposition~7.2]{KR} for the inert case.

Multiplication by two gives the normalization in \eqref{arithmetic normalization}. The sum above proves the formula for $\ell<c$; for $\ell\ge c$, the required identity is
\begin{equation*}
    2e_c^{-1}\sum_{j=0}^{c-1}N_v^j=c-a_v(c),
\end{equation*}
which also holds at $c=0$. Note that the equivalence in \cite[Theorem~4.9]{LMZ} preserves the lifting locus under base change; its duality and Lie quotients are identified in Theorem~4.20 and Proposition~7.8 of that paper. Finally, a nonintegral quasi-homomorphism cannot lift on the special fiber, and elements of $H_c$ lift by its definition.
\end{proof}

To apply Lemma~\ref{canonical to quasi length} to $m_{\Phi_v}$ and $m^0_{\Phi_v}$, we must identify their defining homomorphisms on the formal module $G_{c_v}$. We now identify the formal module associated with the CM cycle using the rank-two summand $\Lambda_{1,v}$ in Lemma~\ref{all place line types}. At a ramified place, we assume $r_v\ge1$. In addition to the quadratic order of Proposition~\ref{local CM order}, the comparison keeps the actual Tate lattice and polarization. In the following, a framing means a quasi-isogeny from the special fiber of the formal module to the fixed formal module with endomorphism algebra $B$. It allows the homomorphisms to be written as elements of $B$.

\begin{lemma}\label{local product stabilization}
After a finite extension, the relative $p$-divisible group over a normalized CM component decomposes as the polarized product of the factor of rank two associated with $\Lambda_{1,v}$ and the positive canonical factors associated with $\Lambda_{1,v}^\perp$. At a nonsplit place, the first factor is the Serre tensor of $G_{c_v}$.

Under this identification and the given polarization, the special homomorphisms in the factor of rank two are represented by $h\in B$, with
\begin{equation}\label{quaternion norm shift}
    v(q(h))=v_B(h)\quad(v\text{ inert}),\qquad v(q(h))=1+v_B(h)\quad(v\text{ ramified}).
\end{equation}
Moreover, the homomorphisms lifting over the whole CM component form $\Lambda_v^\perp$. For a fixed homomorphism whose positive components are integral, the lifting ideal is the ideal of its component in the factor of rank two. If a positive component is nonintegral, the lifting locus is empty.
\end{lemma}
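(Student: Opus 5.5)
The plan is to read the statement off from the Rapoport--Zink uniformization of the RSZ model along the CM cycle, using the orthogonal decomposition $\Lambda_v=\Lambda_{1,v}\perp\Lambda_{1,v}^\perp$ from Lemma~\ref{all place line types}. First, via the formal uniformization of \cite[Section~6.1.2 and Proposition~6.1.5]{Qiu}, I will write the $p$-divisible group at a point of the CM component through the relative Dieudonné (or window) lattice. This lattice is the base change of $\Lambda_v$, with the CM action of $E_v$ on $W_v$ and the trivial action on $W_v^\perp$. Since $W^\perp$ is fixed pointwise by the torus and is positive definite, the summand $\Lambda_{1,v}^\perp$ will give a product of copies of the canonical lift (the ``positive canonical factors''), together with their polarizations. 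For the rank-two summand $\Lambda_{1,v}$, I will invoke the correspondence \cite[Propositions~3.3 and~3.6]{Guo2} between $\Lambda_{1,v}$ and a maximal quaternionic order. Proposition~\ref{local CM order} then identifies the endomorphism order as $\mathcal O_{F_v}+\varpi_v^{c_v}\mathcal O_{E_v}$. By the Tate lattice description $\mathcal O_{E_v}+\varpi_v^{-c}\mathcal O_{F_v}$ and \cite[Proposition~1.1]{Wew2}, the formal module with this Tate lattice is $G_{c_v}$, and the rank-two factor is its Serre tensor. A finite extension $R$ is needed only so that $G_{c_v}$ is defined. The polarization on this factor is read off from the matrices in Proposition~\ref{local CM order}.

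Second, I will identify the special homomorphisms. A special homomorphism from the auxiliary factor $\mathcal A_0$ to the rank-two factor corresponds, after framing, to an element $h\in B$. Its Hermitian norm is the reduced norm of $h$, twisted by the polarization. At inert places this gives $v(q(h))=v_B(h)$. At ramified places the Serre construction introduces the extra factor $\varpi_{E_v}^2$ already noted in Proposition~\ref{known local intersections}(3), which accounts for the shift by one in \eqref{quaternion norm shift}. Components in the canonical factors $\Lambda_{1,v}^\perp$ are homomorphisms between canonical lifts with the same CM type. By the canonical lifting theory of \cite[Theorem~1.4]{Wew1}, these lift over the whole component exactly when they are integral, and otherwise they are not even defined on the special fiber. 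Hence the homomorphisms lifting everywhere are exactly $\Lambda_v^\perp=\Lambda_{1,v}^\perp\perp(\Lambda_{1,v}\cap W_v^\perp)$. The part $\Lambda_{1,v}\cap W_v^\perp=\mathcal O_{E_v}t_v$ is handled separately: the vector $t_v$ corresponds to elements of $H_{c_v}$, which lift by definition.

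Third, for a homomorphism that decomposes into components, I will note that a homomorphism to a product lifts exactly when each component lifts. The lifting ideal is therefore the sum of the component ideals. When the positive components are integral their ideals are zero, so the ideal of the rank-two component is what remains. When a positive component is nonintegral, the locus is already empty on the special fiber.

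The main obstacle I expect is the first step. It requires showing that the polarized product decomposition really holds over the normalized CM component, including the polarizations and the Lie-algebra conditions of \cite[Theorem~4.9, Theorem~4.20 and Proposition~7.8]{LMZ}. It also requires matching the Tate lattice of the rank-two factor exactly with that of $G_{c_v}$, rather than only up to isogeny. I would try to settle this by comparing Dieudonné lattices directly with the adapted bases \eqref{adapted plane bases}, using the lattice $\mathcal O_{E_v}w_v+\mathcal O_{E_v}t_v$ of index given by Proposition~\ref{lattice coset decomposition}.
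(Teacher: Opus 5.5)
Your Steps 2 and 3 follow the same outline as the paper: the Serre tensor of $G_{c_v}$, the extra $\varpi_{E_v}^2$ in the ramified norm, and ``a homomorphism to a product lifts iff each component lifts''. The gap is the first step, which is the real content of the lemma and which you yourself flag as the main obstacle. You do not carry it out, and the mechanism you propose does not deliver it.

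The lattice $\Lambda_v$ is a lattice of special homomorphisms, so it lives on the Tate-module side. It is not, as you assert, a Dieudonn\'e or window lattice of the $p$-divisible group over the CM component, which is a lift over a ramified extension $R$ of $\mathcal O_{\breve F_v}$. Even granting a description of the Dieudonn\'e module at the closed point in terms of $\Lambda_v$, this only splits the special fiber. Whether those idempotents lift to $R$ is exactly the deformation question. So your plan produces no endomorphisms of the $p$-divisible group over the whole normalized CM component that realize $\Lambda_v=\Lambda_{1,v}\perp\Lambda_{1,v}^\perp$. Without them, neither the polarized product nor the later statements about lifting ideals are established.

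The missing idea is to split on the generic fiber and extend by Tate's theorem. The orthogonal projections for $\Lambda_{1,v}\perp\Lambda_{1,v}^\perp$ are integral and commute with $\U(W_v)$, since this torus fixes $W_v^\perp$ pointwise. By the CM description of the point, the Galois action on the Tate module preserves the decomposition after a finite extension. This, not the field of definition of $G_{c_v}$, is why the extension is needed. The projections therefore extend uniquely to endomorphisms over the normal base by \cite[Theorem~4]{Tate}. Uniqueness then transports the idempotent and self-adjointness identities, which gives the \emph{polarized} product.

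Two smaller points. First, the rank-two factor is identified with $G_{c_v}$ by \cite[Theorem~3.2 and Lemma~3.4]{Wew1}, after computing its Tate lattice $\mathcal O_{E_v}+\varpi_v^{-c_v}\mathcal O_{F_v}$ from the matrices of Proposition~\ref{local CM order}. The reference \cite[Proposition~1.1]{Wew2} is instead what identifies the lifting homomorphisms with $H_{c_v}$. Second, to get \eqref{quaternion norm shift} for general $c_v$ you must show that your ``twist by the polarization'' is a unit. The paper changes the framing by some $\beta\in\mathcal O_B^\times$ of prescribed reduced norm, so that $\beta\overline\psi_{c_v}$ has norm $-q(w_v)$ and corresponds to $t_v$. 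This leaves $H_{c_v}$, $\ell_{c_v}$ and the lifting ideals unchanged. Citing Proposition~\ref{known local intersections}(3) does not cover this, since it concerns only the standard datum $r_v=1$.
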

\begin{proof}
The proof is divided into several steps. In the first step we prove the polarized product decomposition. The second identifies its factor of rank two with the Serre construction for $G_{c_v}$ and identifies the homomorphisms that lift over the base. The last step checks \eqref{quaternion norm shift} for the given polarization and deduces the assertions about lifting ideals.

\smallskip
\noindent\textbf{Step 1.} The decomposition $\Lambda_v=\Lambda_{1,v}\perp\Lambda_{1,v}^\perp$ gives integral orthogonal projections. Since $\U(W_v)$ fixes $W_v^\perp$ pointwise, these projections commute with its action. By the CM description of the point, the Galois action on the Tate module preserves this decomposition after a finite extension. Thus the two projections extend to the $p$-divisible group by Tate's theorem \cite[Theorem~4]{Tate}. Their idempotent and self-adjoint identities extend by uniqueness, giving the polarized product. The definite factors have canonical CM lifts, as in \cite[Section~5.3]{Guo2} and \cite[Sections~6.1.1--6.1.2]{Qiu}; every integral homomorphism into them lifts over the base.

\smallskip
\noindent\textbf{Step 2.} We identify the formal module of relative height two by the polarized Serre construction. At a ramified place, the polarized construction is given in \cite[Proposition~6.3, Lemma~6.5, and Remark~6.7]{RSZ1}. The correspondence for a fixed homomorphism is \cite[Lemma~4.6.2 and Remark~4.6.7]{Chen}.

Recall that the matrices in Proposition~\ref{local CM order} determine the Tate lattice. At an inert place, multiplication by the trace-zero unit $\delta$ sends the lattice generated by $1$ and $2/(q(w_v)\delta)$ to
\begin{equation*}
    \mathcal O_{F_v}\delta+\frac{2}{q(w_v)}\mathcal O_{F_v}=\mathcal O_{E_v}+\varpi_v^{-r_v}\mathcal O_{F_v}.
\end{equation*}
In the ramified case, multiplication of the lattice generated by $1$ and $-2\varpi_{E_v}/q(w_v)$ by $\varpi_{E_v}$ gives
\begin{equation*}
    \mathcal O_{F_v}\varpi_{E_v}-\frac{2\varpi_{E_v}^2}{q(w_v)}\mathcal O_{F_v}=\mathcal O_{E_v}+\varpi_v^{-(r_v-1)}\mathcal O_{F_v}.
\end{equation*}
Thus its maximal $\mathcal O_{E_v}$-stable sublattice is $\mathcal O_{E_v}$, and the quotient is isomorphic to $\mathcal O_{F_v}/\varpi_v^{c_v}\mathcal O_{F_v}$. By \cite[Theorem~3.2 and Lemma~3.4]{Wew1}, the resulting formal module is $G_{c_v}$. The correspondence in \cite[Remark~4.6.7]{Chen} identifies the lifting ideals of the fixed homomorphism. In particular, the homomorphisms that lift over the base form $H_{c_v}$. Note that at an inert place, the reduction of $\psi_{c_v}$ commutes with the $\mathcal O_{E_v}$-action when $c_v$ is even, and with its Galois conjugate when $c_v$ is odd, by \cite[Corollary~2.2]{Mih}.

\smallskip
\noindent\textbf{Step 3.} We compare the Hermitian pairings in these coordinates. At a ramified place the correspondence for the Serre tensor sends $h$ to the column
\begin{equation*}
    \binom{h\iota_0(\varpi_{E_v})}{h},\qquad q\left(\binom{h\iota_0(\varpi_{E_v})}{h}\right)=-2\varpi_{E_v}^2\operatorname{Nrd}(h);
\end{equation*}
see \cite[(3.5)--(3.6) and Lemma~3.5]{RSZ1}. Its second coordinate shows that the column lifts exactly when $h$ lifts. This proves the ramified equality in \eqref{quaternion norm shift}.

To give the generator of $H_{c_v}$ the norm $-q(w_v)$, choose $\beta\in\mathcal O_B^\times$ with
\begin{equation*}
    \operatorname{Nrd}(\beta)=\frac{q(w_v)}{2\varpi_{E_v}^2\operatorname{Nrd}(\overline\psi_{c_v})}.
\end{equation*}
The right side is a unit, and $\operatorname{Nrd}(\mathcal O_B^\times)=\mathcal O_{F_v}^\times$. The change of framing by $\beta$ multiplies the polarization by $\operatorname{Nrd}(\beta)$, as in \cite[Remark~3.6 and proof of Proposition~6.3]{RSZ1}. In these coordinates the same homomorphism is $\beta h$, its norm is $-2\varpi_{E_v}^2\operatorname{Nrd}(\beta h)$, and the integral Hom lattice is $\beta H_{c_v}$. Thus $\beta\overline\psi_{c_v}$ has norm $-q(w_v)$, the lifting ideal is unchanged, and
\begin{equation*}
    \max_{a\in\beta H_{c_v}}v_B(\beta h-a)
    =\max_{a\in H_{c_v}}v_B(h-a).
\end{equation*}
Since the polarization changes only by a unit in $\mathcal O_{F_v}^{\times}$, we can rescale one of the isotropic basis vectors to recover the original Hermitian pairing. Thus the polarized lattice agrees with $\Lambda_{1,v}$, with $\beta\overline\psi_{c_v}$ corresponding to $t_v$. At an inert place, the same normalization uses the surjective norm on $\mathcal O_{E_v}^\times$.

Finally, a homomorphism to the product lifts if and only if each component lifts. Every integral homomorphism to the positive factors lifts, while the homomorphisms to the factor of rank two that lift over the CM component form $H_{c_v}$. Together they give $\Lambda_v^\perp$. Moreover, when the positive components are integral, the lifting ideal is determined by the factor of rank two. This proves the last assertions.
\end{proof}

Lemma~\ref{local product stabilization} reduces the calculation of $m_{\Phi_v}$ and $m^0_{\Phi_v}$ to the factor of rank two: the positive components contribute $1_{\Lambda_{1,v}^\perp}$. At inert and ramified places, the distance $\ell_{c_v}$ to $H_{c_v}$ and \eqref{uniform Hom length} now give the required lengths. At split places we use the ordinary lifting equation. These lengths will be compared with Propositions~\ref{zero Whittaker coefficients} and~\ref{nonzero Whittaker coefficients}.

\subsubsection*{Inert places}

We now compute $m_{\Phi_v}$ and $m^0_{\Phi_v}$ at an inert place, using the result in Lemma~\ref{local product stabilization}. Assume $v(d_v)=0$ first, so the analytic coefficients have the normalization of Proposition~\ref{nonzero Whittaker coefficients}. In the factor of rank two, a nonzero class in \eqref{explicit plane cosets} of denominator $\varpi_v^k$ has norm valuation $r_v-2k$. Its distance from $H_{c_v}=H_{r_v}$ is therefore $r_v-2k$. The two orthogonal norm classes have opposite valuation parity, so for $y_2\ne0$,
\begin{equation}\label{inert distances}
    \ell_{c_v}(y_1+y_2)=\begin{cases}v(q(y_2)),&y_1\in H_{c_v},\\\min\{r_v-2k,v(q(y_2))\},&y_1\text{ has denominator }\varpi_v^k.\end{cases}
\end{equation}
A negative value means that the homomorphism is nonintegral and its lifting locus is empty.

\begin{proposition}\label{inert general matching}
For every $r_v\ge0$ and $y_2\ne0$,
\begin{equation*}
    2k_{\Phi_v}(1,y_1+y_2)=m_{\Phi_v}(y_1+y_2)\log N_v.
\end{equation*}
The proper multiplicity on $W_v^\perp$ is zero on $\Lambda_v^\perp$. On a nonzero class of denominator $\varpi_v^k$, it is
\begin{equation*}
    m^0_{\Phi_v}(y_1)=\begin{cases}\displaystyle\frac{2N_v^{1-r_v}(N_v^{r_v-2k+1}-1)}{N_v^2-1},&2k\le r_v,\\[2mm]0,&2k>r_v.\end{cases}
\end{equation*}
Consequently,
\begin{equation*}
    \bar k_{\Phi_v}(1,y_1)+m^0_{\Phi_v}(y_1)\log N_v-c_{\Phi_v}(1,y_1)=a_v(r_v)\Phi_v(y_1,0)\log N_v.
\end{equation*}
\end{proposition}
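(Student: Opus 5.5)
The plan is to compare the two sides coset by coset. We use the tensor decomposition of Proposition~\ref{lattice coset decomposition} on the analytic side and the reduction of Lemma~\ref{local product stabilization} on the geometric side. We first assume $v(d_v)=0$ and that $\Phi_v=1_{\Lambda_v}$; the case $v(d_v)>0$ is handled at the end by the shifts in Proposition~\ref{Whittaker chosen data}(3). By Lemma~\ref{local product stabilization}, the component of $y$ in $\Lambda_{1,v}^\perp$ contributes only the indicator $1_{\Lambda_{1,v}^\perp}$. The lifting ideal of $y_1+y_2$ is that of its component $h$ in the rank-two factor, viewed as an element of $B$ attached to $G_{c_v}$ with $c_v=r_v$. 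Moreover, $v(q(h))=v_B(h)$ by \eqref{quaternion norm shift}. Hence $m_{\Phi_v}(y)=2\length_R(R/I_h)/e(R/F_v)$, where $\ell=\ell_{r_v}(h)$ is given by \eqref{inert distances}.

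\textbf{The matching for $y_2\neq 0$.} We justify \eqref{inert distances} first. In the basis $\{w_v,t_v\}$, the lattice $H_{r_v}$ corresponds to $\mathcal O_{E_v}t_v$, and $v_B$ equals the norm valuation. If $y_1\in H_{r_v}$, the distance is $v_B(y_2)=v(q(y_2))$. For a nonzero class of denominator $\varpi_v^k$, the vector $h-a$ has orthogonal components of norm valuations $r_v-2k$ and $v(q(y_2))$. These valuations have opposite parity, so no cancellation occurs and $v(q(h-a))=\min\{r_v-2k,v(q(y_2))\}$. This value is maximal over $a\in H_{r_v}$ because $a$ only modifies the $t_v$-component within its coset.

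We then substitute $\ell$ into Lemma~\ref{canonical to quasi length}. For $\ell<r_v$ this gives $2N_v^{1-r_v}(N_v^{\ell+1}-1)/(N_v^2-1)$, and for $\ell\ge r_v$ it gives $\ell+1-a_v(r_v)$. These are exactly the coefficients in Proposition~\ref{nonzero Whittaker coefficients}(1), which by \eqref{k coset expansion} give $2k_{\Phi_v}$. A negative $\ell$ corresponds to a nonintegral $h$; it has empty lifting locus and zero Whittaker coefficient. This proves $2k_{\Phi_v}=m_{\Phi_v}\log N_v$, so $\bar k_{\Phi_v}=0$ for $y_2\ne0$.

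\textbf{The restriction to $W_v^\perp$ and the final identity.} Next take $y_2=0$. On $\Lambda_v^\perp$ the homomorphism lifts over the CM component, so $m^0_{\Phi_v}=0$ by definition. On a class of denominator $\varpi_v^k$ we have $\ell=r_v-2k<r_v$, and the lemma gives the stated formula for $m^0_{\Phi_v}$. By \eqref{c coset expansion} and Proposition~\ref{zero Whittaker coefficients}(2), $c_{\Phi_v}(1,y_1)$ equals $m^0_{\Phi_v}(y_1)\log N_v$ on nonzero classes, and equals $-a_v(r_v)\log N_v$ on $\Lambda_v^\perp$.

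The Schwartz extension of $\bar k_{\Phi_v}$ to $W_v^\perp$ is the identically zero function. Summing the three contributions therefore gives $a_v(r_v)\log N_v$ on $\Lambda_v^\perp$ and $0$ on the nonzero classes. Since $\Phi_v(y_1,0)=1_{\Lambda_v^\perp}(y_1)$, this is the asserted identity. For $v(d_v)>0$, Proposition~\ref{Whittaker chosen data}(3) replaces $r_v$ by $r_v+v(d_v)$ on the analytic side. We must check the corresponding shift in the normalized lengths, and expect this to follow from the same measure bookkeeping.

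\textbf{Main obstacle.} The delicate step is the distance computation \eqref{inert distances}. We must check that the parity argument yields the maximum over all of $H_{r_v}$, not just over a chosen representative. We must also check that $\ell_{c_v}$ is measured in $v_B$ consistently with the framing change $\beta$ from Lemma~\ref{local product stabilization}; this reduces to the displayed invariance $\max_{a\in\beta H}v_B(\beta h-a)=\max_{a\in H}v_B(h-a)$.
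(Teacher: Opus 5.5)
For $v(d_v)=0$ your argument is the paper's. You substitute the distances \eqref{inert distances} into \eqref{uniform Hom length}, match coset by coset with Propositions~\ref{nonzero Whittaker coefficients} and~\ref{zero Whittaker coefficients}(2), and read off $-c_{\Phi_v}(1,y_1)=a_v(r_v)\log N_v$ on $\Lambda_v^\perp$. On a nonzero class, $v_B(h-a)$ is in fact constant in $a\in H_{r_v}$, which settles the ``maximum'' worry you raise.

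The one thing to correct is your closing remark on $v(d_v)>0$; drop it rather than trying to check it.

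\begin{itemize}
\item The proposition is asserted only under the standing assumption $v(d_v)=0$ made just before it.
\item For $v(d_v)>0$ your expectation fails. The lifting lengths depend only on $\Lambda_v$, $W_v$ and $G_{r_v}$, not on the additive character, so the arithmetic conductor stays $r_v$. Proposition~\ref{Whittaker chosen data}(3) shifts $r_v$ and $v(a)$ by $v(d_v)$ only on the analytic side.
\item Hence $2k_{\Phi_v}\ne m_{\Phi_v}\log N_v$ in general. On the zero coset with $v(q(y_2))\ge r_v$ they differ by $\bigl(v(d_v)+a_v(r_v)-a_v(r_v+v(d_v))\bigr)\log N_v=\frac{2N_v^{1-r_v}}{N_v^2-1}(1-N_v^{-v(d_v)})\log N_v\neq0$.
\item Consequently $\bar k_{\Phi_v}$ is a nonzero Schwartz function, and the boundary identity acquires an extra $v(d_v)\Phi_v(y_1,0)\log N_v$. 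This is Corollary~\ref{inert different boundary}.
\end{itemize}

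That extra term is essential, not an artifact. Together with the analogous $v(d_v)$ terms at split places, it sums to the $\log d_F$ that cancels the discriminant part of $c_1$ in Proposition~\ref{direct comparison proposition}.
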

\begin{proof}
We substitute the distances in \eqref{inert distances} into the lifting formula \eqref{uniform Hom length}. On the zero coset the result is the coefficient in \eqref{inert opposite derivative}; on a nonzero coset it is the coefficient in Proposition~\ref{nonzero Whittaker coefficients}. This proves the equality $2k_{\Phi_v}=m_{\Phi_v}\log N_v$, including zero when the distance is negative, and gives $\bar k_{\Phi_v}=0$.

To compute the terms on $W_v^\perp$, we set $y_2=0$ and use the explicit expression of $c_{\Phi_v}$ in \eqref{c coset expansion}. A nonzero coset has finite lifting locus and distance $r_v-2k$. Formula~\eqref{uniform Hom length} gives the formula for $m^0_{\Phi_v}$, which equals \eqref{inert coset zero derivative} divided by $\log N_v$. On the zero coset, $m^0_{\Phi_v}=0$ and $c_{\Phi_v}=-a_v(r_v)\log N_v$. These identities prove the last identity in the statement.
\end{proof}

When $v(d_v)>0$, we use the analytic coefficients at the end of Section~\ref{local Whittaker subsection}. Their exponents are shifted by $v(d_v)$, while the arithmetic conductor remains $r_v$.

\begin{corollary}\label{inert different boundary}
The difference $\bar k_{\Phi_v}$ in \eqref{general proper difference} has a Schwartz extension, and
\begin{equation*}
    \bar k_{\Phi_v}(1,y_1)+m^0_{\Phi_v}(y_1)\log N_v-c_{\Phi_v}(1,y_1)=\bigl(v(d_v)+a_v(r_v)\bigr)\Phi_v(y_1,0)\log N_v.
\end{equation*}
\end{corollary}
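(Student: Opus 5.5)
The proof repeats the argument of Proposition~\ref{inert general matching}, with the analytic side taken from Proposition~\ref{Whittaker chosen data}(3) and the arithmetic side unchanged. Put $d=v(d_v)$. By Proposition~\ref{Whittaker chosen data}(3), the coefficients of $2k_{\Phi_v}$ are those of Proposition~\ref{nonzero Whittaker coefficients} with $r_v$ and $v(a)$ replaced by $r_v+d$ and $v(a)+d$. The coefficients of $c_{\Phi_v}$ on nonzero cosets are those of Proposition~\ref{zero Whittaker coefficients} with $r_v$ replaced by $r_v+d$. The zero coset gives $-a_v(r_v+d)\log N_v$. On the arithmetic side, Lemma~\ref{local product stabilization} and Lemma~\ref{canonical to quasi length} involve only the lattice and the formal module $G_{c_v}$, with $c_v=r_v$. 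Hence $m_{\Phi_v}$ and $m^0_{\Phi_v}$ are still given by \eqref{inert distances} and \eqref{uniform Hom length} with the unshifted $r_v$, exactly as in Proposition~\ref{inert general matching}.

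\textbf{Schwartz extension.} For fixed $y_1$ in a coset of denominator $\varpi_v^k$, I compare $2k_{\Phi_v}$ and $m_{\Phi_v}\log N_v$ as functions of $v(q(y_2))$. When $v(q(y_2))$ is large, the minimum in the nonzero-coset formulas stabilizes at $r_v-2k$ on the analytic side and also on the arithmetic side. Both sides are then constants, so their difference is locally constant near $y_2=0$. On the zero coset, both sides grow like $v(a)\log N_v$. The analytic side is $(v(a)+d+1-a_v(r_v+d))\log N_v$ and the arithmetic side is $(v(a)+1-a_v(r_v))\log N_v$, so the linear growth cancels. What remains is a bounded function of $y_2$, constant for $v(q(y_2))$ large. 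It vanishes for $v(q(y_2))$ very negative, because both sides are zero there once $v(a)+d<0$ and $v(a)<0$. Together with the coset indicator in $y_1$, this shows that $\bar k_{\Phi_v}$ extends to a Schwartz function on $V(v)_v$. The value at $y_2=0$ is the stable limit.

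\textbf{The identity on $W_v^\perp$.} I evaluate the extension at $y_2=0$ coset by coset and add $m^0_{\Phi_v}\log N_v-c_{\Phi_v}$. On the zero coset,
\begin{equation*}
\bar k_{\Phi_v}(1,y_1)=\bigl(d-a_v(r_v+d)+a_v(r_v)\bigr)\log N_v,\qquad m^0_{\Phi_v}=0,\qquad c_{\Phi_v}=-a_v(r_v+d)\log N_v.
\end{equation*}
The sum is $(d+a_v(r_v))\log N_v$, as claimed. On a nonzero coset of denominator $\varpi_v^k$, the limit of $2k_{\Phi_v}$ equals the shifted $c$-coefficient of \eqref{inert coset zero derivative}. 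This holds because both are $\frac{2N_v^{1-r_v-d}(N_v^{r_v+d-2k+1}-1)}{N_v^2-1}\log N_v$ when $2k\le r_v+d$, and both vanish otherwise. The limit of $m_{\Phi_v}\log N_v$ is the unshifted expression, which equals $m^0_{\Phi_v}\log N_v$ by Proposition~\ref{inert general matching}. So on nonzero cosets $\bar k_{\Phi_v}=c_{\Phi_v}-m^0_{\Phi_v}\log N_v$, and the total is $0$. This agrees with the right side, since $\Phi_v(y_1,0)=0$ off the zero coset.

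\textbf{Main obstacle.} I expect the delicate point to be the bookkeeping of the coset classes in the nonzero-coset comparison. The analytic shift $r_v\mapsto r_v+d$ on nonzero cosets must refer to the same denominator $k$ relative to the lattice as on the arithmetic side. To check this, I would redo \eqref{Whittaker ball sum} with the character scaled by a generator of $d_v$. The coset volumes and norm valuations are unchanged, and only the congruence level is shifted by $d$. Because the two sides of the nonzero-coset identity are given by the same closed-form expression, no explicit formula for the extension is needed there.
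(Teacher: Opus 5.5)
Your proof is correct and follows the paper's argument. On the zero coset the linear growth cancels and leaves $\bigl(v(d_v)+a_v(r_v)-a_v(r_v+v(d_v))\bigr)\log N_v$; on nonzero cosets the two sides stabilize near $y_2=0$ to $c_{\Phi_v}(1,y_1)$ and $m^0_{\Phi_v}(y_1)\log N_v$. The paper argues the nonzero-coset step structurally, since the norm is bounded away from zero on the coset, rather than by matching closed forms. One harmless slip: on the analytic side the minimum stabilizes at $r_v+v(d_v)-2k$, not $r_v-2k$, as your later explicit formula correctly shows.
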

\begin{proof}
On the zero coset, for sufficiently large $v(q(y_2))$, the difference between the analytic and arithmetic coefficients is
\begin{equation*}
    \bigl(v(d_v)+a_v(r_v)-a_v(r_v+v(d_v))\bigr)\log N_v.
\end{equation*}
Note that we have $m^0_{\Phi_v}=0$ and $c_{\Phi_v}(1,y_1)=-a_v(r_v+v(d_v))\log N_v$ on the zero coset. This proves the desired identity. On every nonzero class, the distance function \eqref{inert distances} makes the proper length constant near $y_2=0$, with value $m^0_{\Phi_v}(y_1)$. Since the norm is bounded away from zero on the corresponding coset in $W_v$, Propositions~\ref{nonzero Whittaker coefficients} and~\ref{Whittaker chosen data} make the nonzero derivative constant with value $c_{\Phi_v}(1,y_1)$. Then their difference gives the identity. Finally, the finitely many formulas have compact support and are locally constant, including at $y_2=0$, so the extension is Schwartz.
\end{proof}

\subsubsection*{Split places outside $S_2$}

At a split place, the ordinary decomposition leaves only the proper term $m^0_{\Phi_v}$ on $W_v^\perp$, as explained in Section~\ref{standard finite subsection}. This term can be computed in the basis $w_v,t_v$ of Section~\ref{local line conductor subsection}. In the factor of rank two, we write $y_1=t_v(\xi_+,\xi_-)$. The zero coset has both coordinates integral, and the quotient in \eqref{all place coset decomposition} is $(\varpi_v^{-r_v}\mathcal O_{F_v}/\mathcal O_{F_v})^2$.

At one place of $E$ above $v$, the ordinary factor is an extension of an \'etale module by a formal module of height one. By Proposition~\ref{local CM order} and the Serre--Tate criterion, its extension parameter $t_{r_v}$ has exact $\varpi_v^{r_v}$-order; see \cite[Section~6.2, equations~(6.2.2)--(6.2.5)]{Chen}. The homomorphism is integral on the special fiber precisely when
\begin{equation*}
    q(w_v)\xi_+\in\mathcal O_{F_v},\qquad\xi_-\in\mathcal O_{F_v},
\end{equation*}
and its lifting equation is $[q(w_v)\xi_+](t_{r_v})=0$. The two coordinates are interchanged at the other place. These conditions follow by writing the homomorphism in the original self-dual lattice, as in the ordinary calculation of \cite[Section~5.3]{Guo2}.

\begin{proposition}\label{split general matching}
There is no proper term with nonzero $W_v$-component. A class in \eqref{all place coset decomposition} with one coordinate of exact denominator $\varpi_v^k$, $1\le k\le r_v$, and the other coordinate integral has
\begin{equation*}
    m^0_{\Phi_v}(y_1)=\frac{1}{N_v^{k-1}(N_v-1)}.
\end{equation*}
The multiplicity is zero on the zero coset and on classes with both coordinates nonintegral. Moreover,
\begin{equation*}
    m^0_{\Phi_v}(y_1)\log N_v-c_{\Phi_v}(1,y_1)=\bigl(r_v+v(d_v)\bigr)\Phi_v(y_1,0)\log N_v.
\end{equation*}
\end{proposition}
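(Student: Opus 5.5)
The plan is to prove the three assertions in order: the vanishing of the $W_v$-component term, the value of $m^0_{\Phi_v}$ on each class of \eqref{all place coset decomposition}, and the final identity, which is a coset-by-coset comparison with Proposition~\ref{zero Whittaker coefficients} and Proposition~\ref{Whittaker chosen data}(3).

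\textbf{The $W_v$-component term.} The first assertion is the ordinary argument already recorded in Section~\ref{standard finite subsection}. The connected--\'etale decomposition of the $p$-divisible group forces the $W_v$-component of a special homomorphism to vanish. So only $y_1\in W_v^\perp$ occurs, and nothing new is needed.

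\textbf{The multiplicity on each class.} I work in the rank-two factor $\Lambda_{1,v}$ with coordinates $y_1=t_v(\xi_+,\xi_-)$. The positive factors contribute $1_{\Lambda_{1,v}^\perp}$, by the split analogue of Lemma~\ref{local product stabilization}; the polarized product decomposition of Step~1 there is insensitive to the splitting type. At one place of $E$ above $v$, the homomorphism is integral on the special fiber exactly when $q(w_v)\xi_+\in\mathcal O_{F_v}$ and $\xi_-\in\mathcal O_{F_v}$. Its lifting locus is then cut out by $[q(w_v)\xi_+](t_{r_v})=0$. The other place interchanges the two coordinates. This gives the following cases.
\begin{enumerate}[(1)]
\item \emph{Zero coset.} Both coordinates are integral, so the homomorphism lifts over the whole CM component and $m^0_{\Phi_v}=0$ by convention.
\item \emph{Both coordinates nonintegral.} The integrality condition fails at both places, so the lifting locus is empty.
\item \emph{Exactly one coordinate nonintegral}, say $\xi_+$ with exact denominator $\varpi_v^k$. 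At the second place the condition on $\xi_+$ fails, so the locus is empty. At the first place we must compute $\length(R/I)$ for $I=\bigl([\varpi_v^{r_v-k}u](t_{r_v})\bigr)$ with $u$ a unit.
\end{enumerate}

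In case (3), $t_{r_v}$ is a Serre--Tate parameter of exact order $\varpi_v^{r_v}$ in the height-one formal group. Hence $[\varpi_v^{r_v-k}](t_{r_v})$ is a primitive $\varpi_v^{k}$-torsion point of a Lubin--Tate group of height one. Its normalized valuation is $1/(N_v^{k-1}(N_v-1))$, by the standard Newton polygon of $[\varpi_v^k]/[\varpi_v^{k-1}]$. I take this from \cite[(6.2.2)--(6.2.5)]{Chen}. Dividing the length by $e(R/F_v)$, with weight one per place of $E$ since each has residue cardinality $N_v$, gives $m^0_{\Phi_v}(y_1)=N_v^{-k}/(1-N_v^{-1})$.

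\textbf{The final identity.} The coset indicator $\Phi_v(y_1,0)$ equals $1$ exactly on the zero coset, since $z=0\in z_\alpha+M_v$ forces $\alpha=0$. On the zero coset, $m^0_{\Phi_v}=0$ and $c_{\Phi_v}(1,y_1)=-a_v(r_v+v(d_v))\log N_v$, with $a_v(c)=c$ at split places. This gives the right side. On a class with one nonintegral coordinate, the value of $m^0_{\Phi_v}$ above equals the derivative in Proposition~\ref{zero Whittaker coefficients}(1). That derivative is unchanged under the shift by $v(d_v)$, by Proposition~\ref{Whittaker chosen data}(3), so the difference is zero. On classes with both coordinates nonintegral, both sides vanish.

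\textbf{Main obstacle.} The delicate step is the exact valuation of $[\varpi_v^{r_v-k}](t_{r_v})$. It requires that the extension parameter is exactly of order $\varpi_v^{r_v}$, which comes from Proposition~\ref{local CM order} via Serre--Tate, and that it is a torsion point rather than a general deformation parameter. I would verify this by writing the ordinary CM lift as a quasi-canonical extension whose Tate module matches the split order $\{(A,B):A-B\in\varpi_v^{r_v}\mathcal O_{F_v}\}$.
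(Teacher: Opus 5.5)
Your proposal is correct and follows essentially the same route as the paper's proof. The paper also uses the étale--connected decomposition to kill the $W_v$-component and sorts the cosets by the integrality conditions at the two places of $E$ above $v$. In the one-nonintegral-coordinate case it obtains the primitive level-$k$ torsion point $[q(w_v)\xi_+](t_{r_v})$ of valuation $1/(N_v^{k-1}(N_v-1))$, and it checks the identity coset by coset against Proposition~\ref{zero Whittaker coefficients}(1) and Proposition~\ref{Whittaker chosen data}(3).
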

\begin{proof}
As in \cite[Section~5.3]{Guo2}, the decomposition into \'etale and connected parts forces the $W_v$-component to vanish. If $\xi_+$ has denominator $\varpi_v^k$ and $\xi_-$ is integral, then $[q(w_v)\xi_+](t_{r_v})$ is a primitive torsion point of level $k$. Its normalized valuation is $1/(N_v^{k-1}(N_v-1))$, by the Newton polygon of $[\varpi_v](T)$; this is also the multiplicity used in \cite[proof of Lemma~8.7]{YZ}. At the other place of $E$ above $v$, the same calculation applies with the two coordinates interchanged. On the other hand, if both coordinates are nonintegral, the homomorphism fails the integrality condition at both places, so its lifting locus is empty. Finally, on the zero coset, the homomorphism lifts over the whole CM component.

The comparison with Proposition~\ref{zero Whittaker coefficients} is now explicit. If exactly one coordinate is nonintegral, its zero Whittaker derivative equals the proper length multiplied by $\log N_v$. If both are nonintegral, both terms vanish. If both are integral, the proper length is zero and the zero Whittaker derivative is $-(r_v+v(d_v))\log N_v$. This proves the identity in the statement.
\end{proof}

\subsubsection*{Ramified places with $r_v\ge1$}

At a ramified place with $r_v\ge1$, we apply \eqref{uniform Hom length} with $c_v=r_v-1$. For $y_2\ne0$, the corresponding values of the distance function $\ell_{c_v}$ are
\begin{equation*}
    \ell_{c_v}(y_1+y_2)=
    \begin{cases}
        v(q(y_2))-1,&y_1\in H_{c_v},\\
        \min\{r_v-k,v(q(y_2))\}-1,
        &y_1\text{ lies in a nonzero coset of order }\varpi_{E_v}^k.
    \end{cases}
\end{equation*}
These formulas follow from \eqref{explicit plane cosets} and \eqref{quaternion norm shift}, since the two terms of equal valuation cannot cancel by the local norm condition.

\begin{proposition}\label{ramified general matching}
For $y_2\ne0$, the proper multiplicity on the zero coset is
\begin{equation*}
    m_{\Phi_v}(y_1+y_2)=\begin{cases}\displaystyle N_v^{1-r_v}\frac{N_v^{v(q(y_2))}-1}{N_v-1},&0\le v(q(y_2))<r_v,\\[2mm]v(q(y_2))-a_v(r_v-1),&v(q(y_2))\ge r_v,\\0,&v(q(y_2))<0.\end{cases}
\end{equation*}
On a nonzero class of denominator $\varpi_{E_v}^k$, it is
\begin{equation*}
    m_{\Phi_v}(y_1+y_2)=\begin{cases}\displaystyle N_v^{1-r_v}\frac{N_v^{\min\{r_v-k,v(q(y_2))\}}-1}{N_v-1},&k\le r_v,\ v(q(y_2))\ge0,\\[2mm]0,&\text{otherwise}.\end{cases}
\end{equation*}
The proper multiplicity on $W_v^\perp$ vanishes on the zero coset and is
\begin{equation}\label{ramified perp proper table}
    m^0_{\Phi_v}(y_1)=\begin{cases}\displaystyle N_v^{1-r_v}\frac{N_v^{r_v-k}-1}{N_v-1},&1\le k<r_v,\\[2mm]0,&k\ge r_v\end{cases}
\end{equation}
on a nonzero class. The Schwartz extension of \eqref{general proper difference} is
\begin{equation}\label{ramified full Schwartz difference}
    \bar k_{\Phi_v}(1,y_1+y_2)=N_v^{-r_v}1_{\varpi_{E_v}^{-r_v}\mathcal O_{E_v}t_v\oplus\Lambda_{1,v}^\perp}(y_1)1_{\mathcal O_{F_v}}(q(y_2))\log N_v.
\end{equation}
Consequently,
\begin{equation*}
    \bar k_{\Phi_v}(1,y_1)+m^0_{\Phi_v}(y_1)\log N_v-c_{\Phi_v}(1,y_1)=\bigl(1+a_v(r_v-1)\bigr)\Phi_v(y_1,0)\log N_v.
\end{equation*}
\end{proposition}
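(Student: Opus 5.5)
The plan follows the pattern of Proposition~\ref{inert general matching}: first compute the arithmetic lengths, then compare them with the Whittaker derivatives. I take $v(d_v)=0$ first, which holds at ramified places by the hypotheses on $E/F$. By Lemma~\ref{local product stabilization}, the positive factors contribute only the indicator $1_{\Lambda_{1,v}^\perp}$. The lifting ideal of $y=y_1+y_2$ is therefore the ideal of its component in the rank-two factor, and this factor is the Serre tensor of $G_{c_v}$ with $c_v=r_v-1$.

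\textbf{Step 1: distances.} By \eqref{explicit plane cosets}, a class of denominator $\varpi_{E_v}^k$ has norm valuation $r_v-k$. Since $y_2\in W(v)_v$ lies in the opposite norm class, no cancellation occurs between summands of equal valuation. Combining \eqref{quaternion norm shift} ($v(q)=1+v_B$) with the identification of $\beta\overline\psi_{c_v}$ with $t_v$ gives the displayed values of $\ell_{c_v}$. A negative value means the homomorphism is nonintegral, so its lifting locus is empty.

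\textbf{Step 2: the lengths $m_{\Phi_v}$.} I substitute these distances into Lemma~\ref{canonical to quasi length} with $c=r_v-1$.
\begin{itemize}
\item For $\ell<c$, the ramified branch $N_v^{-c}(N_v^{\ell+1}-1)/(N_v-1)$ with $\ell=v(q(y_2))-1$ gives $N_v^{1-r_v}(N_v^{v(q(y_2))}-1)/(N_v-1)$.
\item For $\ell\ge c$, the formula $\ell+1-a_v(c)$ gives $v(q(y_2))-a_v(r_v-1)$.
\item On nonzero cosets, the same substitution with $\min\{r_v-k,v(q(y_2))\}-1$ gives the second table.
\end{itemize}
For $m^0_{\Phi_v}$ I set $y_2=0$. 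The zero coset lifts over the whole CM component, so $m^0_{\Phi_v}=0$ there. A nonzero coset has distance $r_v-k-1$, which yields \eqref{ramified perp proper table}; this vanishes for $k\ge r_v$, either as an empty sum at $k=r_v$ or by nonintegrality for $k>r_v$.

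\textbf{Step 3: the difference $\bar k_{\Phi_v}$.} I form $2k_{\Phi_v}-m_{\Phi_v}\log N_v$ coset by coset, using \eqref{k coset expansion}, where the normalizing factor is $N_v^{1/2}$, together with Proposition~\ref{nonzero Whittaker coefficients}(2).
\begin{itemize}
\item On a nonzero coset with $k\le r_v$ and $\mu=\min\{r_v-k,v(q(y_2))\}\ge0$, the analytic term is $N_v^{-r_v}(N_v^{\mu+1}-1)/(N_v-1)$ and the arithmetic term is $N_v^{1-r_v}(N_v^{\mu}-1)/(N_v-1)$. Their difference telescopes to $N_v^{-r_v}\bigl(N_v^{\mu+1}-1-N_v^{\mu+1}+N_v\bigr)/(N_v-1)=N_v^{-r_v}$.
\item On the zero coset with $0\le v(q(y_2))<r_v$, the same computation gives $N_v^{-r_v}$.
\item On the zero coset with $v(q(y_2))\ge r_v$, the difference is $\bigl(1-r_v+\tfrac{1-N_v^{-r_v}}{N_v-1}\bigr)+a_v(r_v-1)$. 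Substituting $a_v(r_v-1)=r_v-1-\frac{1-N_v^{1-r_v}}{N_v-1}$ reduces this to $\frac{N_v^{1-r_v}-N_v^{-r_v}}{N_v-1}=N_v^{-r_v}$.
\item Cosets with $k>r_v$ or $v(q(y_2))<0$ give $0$ on both sides.
\end{itemize}
Hence the difference equals $N_v^{-r_v}$ exactly on the cosets with $k\le r_v$, that is, for $y_1\in\varpi_{E_v}^{-r_v}\mathcal O_{E_v}t_v\oplus\Lambda_{1,v}^\perp$ and $q(y_2)$ integral. This is \eqref{ramified full Schwartz difference}, which is visibly Schwartz, and I set its value at $y_2=0$ by continuity.

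\textbf{Step 4: the final identity.} I restrict to $y_2=0$ and use \eqref{c coset expansion} together with Proposition~\ref{zero Whittaker coefficients}(3).
\begin{itemize}
\item On the zero coset: $N_v^{-r_v}+0+a_v(r_v)$. The identity $a_v(r_v)-a_v(r_v-1)=1-N_v^{-r_v}$ turns this into $1+a_v(r_v-1)$.
\item On a nonzero coset with $k\le r_v$: $c_{\Phi_v}=N_v^{-r_v}(N_v^{r_v-k+1}-1)/(N_v-1)\log N_v$, which equals $N_v^{-r_v}\log N_v+m^0_{\Phi_v}\log N_v$ by the same telescoping. So the sum is $0$, consistent with $\Phi_v(y_1,0)=0$ off $\Lambda_v^\perp$.
\item For $k>r_v$: all terms vanish.
\end{itemize}

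The main obstacle I expect is Step 1. There one must confirm that the framing normalization of Lemma~\ref{local product stabilization} puts the coset representatives $-\alpha t_v$ at the stated $v_B$-distance from $H_{c_v}$ with the shift of one. This needs the nonsquare-residue argument of Proposition~\ref{nonzero Whittaker coefficients} to rule out extra cancellation when the $y_1$- and $y_2$-components have equal valuation. I would check this directly in the matrix model of Proposition~\ref{local CM order}.
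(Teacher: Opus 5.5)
Your proposal is correct and follows the paper's own proof step for step. You substitute the distance function into Lemma~\ref{canonical to quasi length} with $c_v=r_v-1$. You then subtract the resulting lengths from the Whittaker derivatives of Proposition~\ref{nonzero Whittaker coefficients} coset by coset, obtaining the constant $N_v^{-r_v}$ exactly on the cosets with $k\le r_v$. Finally, you check the boundary identity by the same telescoping and $a_v(r_v)-a_v(r_v-1)=1-N_v^{-r_v}$. Your explicit verifications are all correct, including the zero coset with $v(q(y_2))\ge r_v$, which the paper leaves implicit.
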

\begin{proof}
We apply Lemma~\ref{canonical to quasi length} to the explicit distance function above, with $c_v=r_v-1$. This gives the two formulas for $m_{\Phi_v}$, including zero when the distance is negative. For $y_2=0$, the nonzero classes remain proper and have distance $r_v-k-1$, proving \eqref{ramified perp proper table}. In particular, the classes $k=r_v$ give no intersection.

Next, we subtract these lengths from the Whittaker derivatives in Proposition~\ref{nonzero Whittaker coefficients} and \eqref{ramified centered nonzero}. On the zero coset the difference divided by $\log N_v$ is $N_v^{-r_v}$ whenever $v(q(y_2))\ge0$. For a nonzero class, the same difference is $N_v^{-r_v}$ precisely when $k\le r_v$ and $v(q(y_2))\ge0$, and is zero otherwise. Multiplication by $1_{\Lambda_{1,v}^\perp}$ gives \eqref{ramified full Schwartz difference}, whose support is a lattice.

Finally, on a nonzero class with $k\le r_v$, \eqref{ramified coset zero} and \eqref{ramified perp proper table} give
\begin{equation*}
    N_v^{-r_v}+N_v^{1-r_v}\frac{N_v^{r_v-k}-1}{N_v-1}-N_v^{-r_v}\frac{N_v^{r_v-k+1}-1}{N_v-1}=0.
\end{equation*}
All three terms vanish when $k>r_v$. On the zero coset their sum is $N_v^{-r_v}+a_v(r_v)=1+a_v(r_v-1)$, proving the last identity in the statement for every $r_v\ge1$.
\end{proof}

\subsubsection*{Ramified places with $r_v=0$}

It remains to compute the proper terms at $v\in S_0(W)$, as defined in \eqref{S0 definition}. Here $n\ge2$ is even, $M_v$ is self-dual and $\Phi_v=1_{\Lambda_v^\vee}$. Let $G$ be the canonical formal $\mathcal O_{F_v}$-module with positive $\mathcal O_{E_v}$-action, and let $\overline G$ be the same formal module with the conjugate action. The factor corresponding to $M_v$ is the canonical lift $\overline G$ with the conjugate $\mathcal O_{E_v}$-action. We first identify the horizontal divisor selected by this $\Phi_v$ and then compute its lifting length.

We use the polarization $\lambda:\mathcal A\to\mathcal A^\vee$ introduced in Section~\ref{integral model subsection}. The action on the dual object is the one for which $\lambda$ is $\mathcal O_{E_v}$-linear. For a fixed rational special homomorphism $x$, let $\mathcal Y(x)$ be the lifting locus of $\lambda x$, as in \cite[Definition~2.9]{Yao}.

\begin{lemma}\label{unit horizontal divisor}
On the formal completions along the CM cycle at $v$, the horizontal Cartier divisor selected by $1_{\Lambda_v^\vee}$ is
\begin{equation*}
    D_x=\mathcal Y(x).
\end{equation*}
The identification preserves the rational homomorphism $x$, the finite maps, and the stabilizer weights in the divisor series.
\end{lemma}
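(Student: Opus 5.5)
The plan is to compare two descriptions of the same horizontal divisor: the admissible closure of the generic special divisor attached to $\Phi_v=1_{\Lambda_v^\vee}$, and the lifting locus $\mathcal Y(x)$ of $\lambda x$. First we make the generic fibers agree. At the finite level the divisor series weights $Z(x)_U$ by $r(g)\Phi(x)$, so for $g_f=1$ the vectors $x$ that contribute are those with $x_v\in\Lambda_v^\vee$. On the generic fiber, $Z(x)$ is the locus where the rational homomorphism $x$ is an honest homomorphism from $\mathcal A_0$ into $\mathcal A$, with its integrality at $v$ measured by $\Lambda_v^\vee$ rather than $\Lambda_v$. Since $\lambda$ is an isogeny, $\lambda x$ is a homomorphism into $\mathcal A^\vee$ exactly when $x\in\Lambda_v^\vee$. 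This uses the standard identification of the Tate lattice of $\mathcal A^\vee$ with the dual lattice $\Lambda_v^\vee$ under the polarization. So on generic fibers $\mathcal Y(x)$ coincides with the special divisor selected by $1_{\Lambda_v^\vee}$. The finite maps and stabilizer weights are not affected, because composing with the fixed $\lambda$ commutes with the $U$-action and with the Hecke translates.

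Next we show that $\mathcal Y(x)$ is a horizontal Cartier divisor on the formal completion along the CM cycle. Then, by the characterization of $D_x$ as the horizontal Cartier closure in \cite[Lemma~A.2.4]{Qiu}, the two divisors are equal. Here we invoke the results of Yao \cite{Yao} on the almost $\varpi_{E_v}$-modular RSZ model. These say that the lifting locus of $\lambda x$ is a Cartier divisor, namely the vanishing of one equation on the regular formal model, and that it is flat over $\mathcal O_{\breve E_v}$ when $x\ne0$, with no vertical components.

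Concretely, at the CM point Lemma~\ref{all place line types}(3) gives $\Lambda_v=M_v\perp\Lambda_v^\perp$ with $\Lambda_v^\perp$ $\varpi_{E_v}$-modular. So the $p$-divisible group splits as the product of $\overline G$ and a $\varpi_{E_v}$-modular factor, as in Step 1 of Lemma~\ref{local product stabilization}. Under $\lambda$ the modular factor's dual lattice $\varpi_{E_v}^{-1}\Lambda_v^\perp$ becomes integral. This yields one equation: the obstruction to lifting the component of $\lambda x$ in the Lie algebra line cut out by $\mathcal I_{\mathcal A}^{\vee\vee}$. Flatness then follows by the same argument that gives flatness for the $\varpi_{E_v}$-modular model.

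The main obstacle is ruling out vertical components of $\mathcal Y(x)$ in the almost modular situation. This is exactly why $\lambda x$ is used rather than $x$: the lifting locus of $x$ itself would acquire the exceptional or vertical locus of the almost modular model. I would try first to check that the deformation equation for $\lambda x$ is not divisible by $\varpi_{E_v}$. To do this, I would compute it on the product decomposition above, using the Grothendieck--Messing description with the Hodge filtration $\mathcal I_{\mathcal A}^{\vee\vee}$. On the $\overline G$ factor the equation is a unit multiple of the canonical lifting equation, by \cite[Theorem~1.4]{Wew1}. That rules out divisibility by $\varpi_{E_v}$ near the CM cycle, which is all the lemma needs on formal completions.
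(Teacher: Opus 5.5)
Your core argument is the paper's. You match the generic conditions by Tate duality; the paper adds that an integral map on Tate modules over the generic fiber kills the finite \'etale torsion, so it factors through a power of $\varpi_v$ and $\lambda x$ is an honest homomorphism. You then use \cite[Remark~1.10 and Proposition~5.17]{Yao} to get that $\mathcal Y(x)$ is a relative Cartier divisor. The paper notes two adjustments here: the $\mathcal O_{E_v}$-actions on $\mathcal A_0$ and $\mathcal A^\vee$ are conjugated to match Yao's signature convention, and the polarization is changed by a unit for the other framing class. Flatness then makes the ideal of $\mathcal Y(x)$ the contraction of its generic ideal, so $\mathcal Y(x)$ is the horizontal closure. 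The weights are preserved because the inverse operation is composition with $\lambda^{-1}$. Taking the citation of Yao at face value, this proves the lemma.

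What does not work is your last two paragraphs, where you treat vertical components as the main obstacle and try to rule them out on the CM component. Drop that argument; it cannot replace Yao.

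The product decomposition into $\overline G$ and the modular factors exists only over the one-dimensional CM component. It does not exist over the $n$-dimensional completion $\operatorname{Spf}\mathcal O_{\breve E_v}[[b_1,\ldots,b_n]]$ of $\mathcal X_U$ at a point of the cycle. So it computes only the restriction $f|_{\mathcal P}$ of a local equation $f$ of $\mathcal Y(x)$, and that restriction can never exclude $\varpi_{E_v}\mid f$:
\begin{enumerate}[(1)]
\item For $x\in W^\perp$, which is exactly the case where $D_x$ contains the generic CM point, $\lambda x$ lifts over the whole CM component, so $f|_{\mathcal P}=0$.
\item For $y_2\ne0$, the canonical lifting formula gives $\length_R(R/I_h)=(v(q(y_2))+1)\,e(R/E_v)\ge v_R(\varpi_{E_v})$. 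This is consistent with $f=\varpi_{E_v}g$; for $v(q(y_2))=0$ the restricted equation is a unit times $\varpi_{E_v}$.
\end{enumerate}
Non-divisibility by $\varpi_{E_v}$ concerns the reduction of $f$ on the special fiber of the full chart, i.e.\ directions transverse to the CM cycle. It must therefore come from a result on the whole local model: Yao's, or an explicit computation on the Smithling chart of Lemma~\ref{unit image calculation}. The $\varpi_{E_v}$-modular argument does not carry over to this exotic smooth case by itself.

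Separately, the obstruction for $\lambda x$ lies in $\operatorname{Hom}(\Fil\mathcal A_0,\ker(\iota_{\mathcal A^\vee}(\varpi_{E_v})+\varpi_{E_v}))$, inside $\Lie\mathcal A^\vee$. It does not lie in the line $\mathcal I_{\mathcal A}^{\vee\vee}\subset\Lie\mathcal A$. The two lines are related only through $d\lambda'$, and this mismatch is the source of the factor $\varpi_{E_v}$ in Proposition~\ref{unit conormal comparison}.
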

\begin{proof}
The proof is routine. We first identify the generic lifting condition. Then, we use flatness to identify its horizontal closure, and compare the finite maps.

The auxiliary principal polarization and Tate duality identify $\Lambda_v^\vee$ with the rational special homomorphisms for which $\lambda x$ is integral on Tate modules. On the relative factor at $v$ in characteristic zero, multiplying $\lambda x$ by a sufficiently large power of $\varpi_v$ gives an integral homomorphism. By the Tate module condition, this multiple vanishes on the torsion killed by that power of $\varpi_v$. Since this torsion is finite \'etale, the multiple factors through multiplication by the same power of $\varpi_v$. Thus the generic moduli conditions agree for the fixed rational homomorphism $x$.

We now apply the formal uniformization of \cite[Section~6.1.2 and Proposition~6.1.5]{Qiu}. Note that our signature convention agrees with that of \cite[Section~2.1]{Yao} after conjugating the $\mathcal O_{E_v}$-actions on both $\mathcal A_0$ and $\mathcal A^\vee$. This does not affect whether $\lambda x$ lifts. In odd Hermitian rank $n+1\ge3$, $\mathcal Y(x)$ is a relative Cartier divisor by \cite[Remark~1.10 and Proposition~5.17]{Yao}. The same assertion holds for the other framing class after changing the polarization by a unit, as in \cite[Section~2.3]{Yao}. Since $\mathcal Y(x)$ is flat over $\mathcal O_{E_v}$, its defining ideal on each completed local ring is the contraction of its generic-fiber ideal. Thus the generic identification above proves $D_x=\mathcal Y(x)$ as an equality of Cartier divisors.

Finally, this identification preserve the multiplicities in the height series. Indeed, its inverse on the generic moduli scheme is composition with $\lambda^{-1}$, and both operations commute with the level action and the forgetful maps. The stabilizers are therefore unchanged. Passing to flat closures and applying the projection formula gives the multiplicities in \eqref{arithmetic normalization}.
\end{proof}

We have the following result.

\begin{proposition}\label{unit local matching}
For every even $n\ge2$ and $y=y_1+y_2\in W_v^\perp\perp W(v)_v$ with $y_2\ne0$,
\begin{equation*}
    m_{\Phi_v}(y)=
    \begin{cases}
        \bigl(v(q(y_2))+1\bigr)\Phi_v^\perp(y_1),&v(q(y_2))\ge0,\\
        0,&v(q(y_2))<0.
    \end{cases}
\end{equation*}
Moreover,
\begin{equation*}
    2k_{\Phi_v}(1,y)=m_{\Phi_v}(y)\log N_v,\qquad
    m^0_{\Phi_v}=0,\qquad c_{\Phi_v}(1,\cdot)=0.
\end{equation*}
Thus $\bar k_{\Phi_v}$ extends by zero to all of $V(v)_v$.
\end{proposition}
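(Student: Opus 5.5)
We work on the formal completion along the CM cycle at $v\in S_0(W)$ and use Lemma~\ref{unit horizontal divisor} to replace the horizontal divisor selected by $1_{\Lambda_v^\vee}$ with $\mathcal Y(y)$, the lifting locus of $\lambda y$. Since $\Lambda_v=M_v\perp\Lambda_v^\perp$ by Lemma~\ref{all place line types}(3), the argument of Step~1 of Lemma~\ref{local product stabilization} applies with $M_v$ itself as the distinguished summand. It gives a polarized product decomposition of the relative $p$-divisible group. One factor is the canonical lift $\overline G$ attached to $M_v$, with the conjugate action. The remaining factors are canonical and attached to the $\varpi_{E_v}$-modular lattice $\Lambda_v^\perp$. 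Because $M_v$ is self-dual, the order of Proposition~\ref{local CM order} is maximal and $c_v=0$. The relevant Hom lattice is therefore $H_0=\mathcal O_{E_v}$ inside $\mathcal O_B$.

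\textbf{Components.} For $y=y_1+y_2$, the $\Lambda_v^\perp$-components of $\lambda y$ are integral exactly when $y_1\in(\Lambda_v^\perp)^\vee$, that is, when $\Phi_v^\perp(y_1)=1$. Integral components of this kind lift over the whole base by canonicity. Otherwise the lifting locus is empty. Hence the lifting ideal is that of the $\overline G$-component $h$ of $\lambda y_2$, multiplied by $\Phi_v^\perp(y_1)$.

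\textbf{Length.} Since $y_2$ lies in the opposite norm class $W(v)_v$, its component $h$ is orthogonal to $H_0$ in $B$. We then need the ramified analogue of \eqref{quaternion norm shift} with $\lambda$ inserted. Because $M_v$ is self-dual rather than $\varpi_{E_v}$-modular, the extra factor $\varpi_{E_v}^2$ disappears, and we expect $v_B(h)=v(q(y_2))$. By the orthogonality, $\ell_0(h)=v_B(h)$. Lemma~\ref{canonical to quasi length} with $c=0$ then gives $\ell+1-a_v(0)=v(q(y_2))+1$, since $a_v(0)=0$. When $v(q(y_2))<0$, $h$ is nonintegral and the lifting locus is empty. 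This proves the formula for $m_{\Phi_v}$.

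\textbf{Main obstacle.} The step I expect to be hardest is this norm normalization. We must show that passing through $\lambda$ and the conjugate action gives exactly $v_B(h)=v(q(y_2))$, and not $1+v(q(y_2))$ as in the $\varpi_{E_v}$-modular plane. I would first check it through the Serre-tensor correspondence of \cite{RSZ1} applied to the unimodular line. As a consistency check, the resulting formula should match the analytic side.

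\textbf{Comparison with the analytic side.} Proposition~\ref{Whittaker chosen data}(2) states that $2k_{\Phi_v}$ is given by \eqref{ramified center nonzero derivative} at $r_v=0$, times $1_{(\Lambda_v^\perp)^\vee}(y_1)$. For $v(a)\ge0=r_v$ that formula gives $(v(a)+1+0)\log N_v$, and it gives zero for $v(a)<0$. This agrees with $m_{\Phi_v}\log N_v$, so $\bar k_{\Phi_v}=0$ off $W_v^\perp$ and extends by zero.

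\textbf{The terms $m^0_{\Phi_v}$ and $c_{\Phi_v}$.} For $y_2=0$, the decomposition \eqref{unit test function} has a single coset, because $M_v$ is self-dual. Every $y_1$ in the support therefore has $\overline G$-component $0\in H_0$, so $\lambda y_1$ lifts over the whole CM component. By definition this gives $m^0_{\Phi_v}=0$. Finally, $c_{\Phi_v}(1,\cdot)=0$ is Proposition~\ref{Whittaker chosen data}(2).
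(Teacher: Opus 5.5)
Your proposal is correct and follows the paper's proof essentially step by step. It uses the same product decomposition into $\overline G$ and the canonical factors for $\Lambda_v^\perp$, the same integrality of $\lambda y_1$ exactly on $(\Lambda_v^\perp)^\vee$, the canonical lifting formula at $c=0$ with $\ell_0(h)=v_B(h)$ for $h\in E_vj=W(v)_v$, and the comparison with \eqref{ramified center nonzero derivative} at $r_v=0$.

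The step you flag is settled in the paper by exactly your reason: since $M_v$ is self-dual, the induced polarization on the rank-one factor $\overline G$ is principal up to a unit, so $q(y_2)$ and $\operatorname{Nrd}(h)$ differ by a unit and no Serre-tensor argument is needed. The equality $\ell_0(h)=v_B(h)$ is justified there by the independence of the residues of $1$ and $j$, with $j^2$ a nonsquare unit; this is what your orthogonality remark amounts to in odd residue characteristic.
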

\begin{proof}
We first identify the factors of the polarized $p$-divisible group over the CM component. Then we use the canonical lifting formula to compute the local intersection number and compare it with the Whittaker derivative.

Note that the orthogonal projections onto $M_v$ and each $\varpi_{E_v}$-modular summand of $\Lambda_v^\perp$ are $\mathcal O_{E_v}$-linear and preserve $\Lambda_v$. After a finite extension, Tate's extension theorem applies as in Lemma~\ref{local product stabilization}. Thus the $p$-divisible group over the CM component decomposes into the canonical lift $\overline G$ corresponding to $M_v$ and the canonical lifts corresponding to the summands of $\Lambda_v^\perp$. Their polarizations are induced by the original polarization, and the polarization on $\overline G$ is principal up to a unit. For the positive factors, the Tate duality in Lemma~\ref{unit horizontal divisor} identifies the lattice of maps $\lambda y_1$ with $(\Lambda_v^\perp)^\vee$.

Now, it is clear that for the factors corresponding to $\Lambda_v^\perp$, the homomorphism $\lambda y_1$ is integral if and only if $y_1\in(\Lambda_v^\perp)^\vee$. Since every such integral homomorphism lifts over the whole CM component, we obtain the factor $\Phi_v^\perp(y_1)$ and $m^0_{\Phi_v}=0$.

Next, we identify the underlying formal modules of $G$ and $\overline G$. In their division quaternion algebra choose $j\in\mathcal O_B^\times$ with $ja=\bar a j$ for $a\in E_v$ and $j^2\in\mathcal O_{F_v}^\times$ not a norm. Then
\begin{equation*}
    \mathcal O_B=\mathcal O_{E_v}\oplus\mathcal O_{E_v}j,
    \qquad W(v)_v=E_vj.
\end{equation*}
The Hermitian norm of the negative component differs from the reduced norm by a unit, since its polarization is principal. For the homomorphism $h\in E_vj$ representing $\lambda y_2$,
\begin{equation*}
    v_B(h)=v(q(y_2)),\qquad
    \max_{a\in\mathcal O_{E_v}}v_B(h-a)=v_B(h).
\end{equation*}
Indeed, the residues of $1$ and $j$ are linearly independent over the residue field of $F_v$. Hence $v_B(h-a)=\min\{v_B(h),v_B(a)\}$ for every $a\in\mathcal O_{E_v}$, proving the second equality.

Now we prove the formula for $m_{\Phi_v}$. In the case $v(q(y_2))<0$ the lifting locus is empty. Otherwise, the canonical formula in \cite[Theorem~1.4]{Wew1} or equivalently \eqref{uniform Hom length} with $c=0$, gives normalized length $v(q(y_2))+1$. Since $e(R/F_v)=2e(R/E_v)$, this is exactly the coefficient of $\log N_v$ in \eqref{arithmetic normalization}. Lemma~\ref{unit horizontal divisor} then identifies the lifting divisor with the horizontal closure.

Finally, note that equations~\eqref{ramified centered family} and~\eqref{ramified centered nonzero} at $r_v=0$ give $c_{\Phi_v}=0$ and
\begin{equation*}
    2k_{\Phi_v}(1,y)=\bigl(v(q(y_2))+1\bigr)\Phi_v^\perp(y_1)\log N_v
    \qquad\bigl(v(q(y_2))\ge0\bigr),
\end{equation*}
with zero for negative valuation. This proves the identities in the statement. 
\end{proof}

This completes the proper terms at $S_0(W)$. The remaining contribution in \eqref{height decomposition} is the restricted degree $i(x,\mathcal P)$, which we compute in the next subsection.

\subsection{Divisors through the CM cycle}\label{adjunction subsection}

The remaining term in Theorem~\ref{height decomposition theorem} is $i(x,\mathcal P)$, defined in \eqref{restricted divisor degree}. We compare $\mathcal O(D_x)$ with $\mathcal L^{-1}$, first outside $S_0(W)$ and then at $S_0(W)$, and use the regularized metric in \eqref{central limit} to compute its degree. Finally, at each $v\in S_2$, we compute the correction arising
from requiring $x/\varpi_v$ to lift instead of $x$.

\subsubsection*{The integral comparison outside $S_0(W)$}

At these places, Lemma~\ref{local product stabilization} identifies the Hodge module on the CM component with the one in the factor of rank two. We use this to apply the conormal calculation of \cite[Lemma~5.1]{Guo2} to the present embedding.

\begin{lemma}\label{Hodge stabilization}
At a finite place $v\notin S_0(W)$, for a fixed homomorphism $x$ which lifts over the whole CM component, the polarization comparison gives an integral isomorphism
\begin{equation*}
    \mathcal O(D_x)|_{\mathcal P}\simeq\mathcal L^{-1}|_{\mathcal P}.
\end{equation*}
Moreover, under the product decomposition of Lemma~\ref{local product stabilization}, the inverse Hodge line bundle comes from the factor of rank two.
\end{lemma}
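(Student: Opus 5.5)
The plan is to reduce the statement to the factor of rank two using Lemma~\ref{local product stabilization}, and then apply the conormal (deformation-theoretic) computation of \cite[Lemma~5.1]{Guo2} to that factor.

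\textbf{Step 1: locate the Hodge bundle in the rank-two factor.} Work on the formal completion of the RSZ model along a point of the CM component, after the finite extension used in Lemma~\ref{local product stabilization}. The polarized $p$-divisible group splits as the rank-two factor attached to $\Lambda_{1,v}$ times the positive canonical factors attached to $\Lambda_{1,v}^\perp$. On a positive definite factor, the signature condition forces the distinguished line $(J_{\mathfrak A_v}\Lie\mathcal A)_1$, or $\mathcal I_{\mathcal A}$ at a ramified place, to have no contribution. Indeed, these factors are canonical lifts, and the Kottwitz condition puts them in signature $(1,0)$. Hence the defining quotient of $\mathcal L^{-1}$ in Section~\ref{integral model subsection} is the corresponding quotient of the Lie algebra of the rank-two factor. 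At a nonsplit place this factor is the Serre tensor of $G_{c_v}$; at a split place it is the ordinary factor. This gives the second assertion. At a ramified place with $r_v\ge1$, the lattice is $\varpi_{E_v}$-modular or almost modular. In the almost modular case, the reflexive hull $\mathcal I_{\mathcal A}^{\vee\vee}$ agrees with $\mathcal I_{\mathcal A}$ on the CM component, because the defect is supported on the positive factors, where nothing changes.

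\textbf{Step 2: split $x$ into components.} Write $x=x_1+x'$ with $x_1$ in the rank-two factor and $x'$ in the positive factors. Since $x$ lifts over the whole CM component, Lemma~\ref{local product stabilization} gives $x\in\Lambda_v^\perp$, so $x'$ lifts over the base and $x_1\in H_{c_v}$. Consequently, the obstruction to deforming $x$ is the obstruction to deforming $x_1$ alone. By Grothendieck--Messing, the first-order obstruction is the composite
\[
\Fil\mathcal A_0\to\Lie\mathcal A,
\]
projected to the distinguished line. This defines a section of $\mathcal L^{-1}\otimes\mathcal O(-D_x)$ on a neighborhood of the CM component, or equivalently a map from the conormal bundle of $D_x$ to $\mathcal L^{-1}$. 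This is exactly the argument of \cite[Lemma~5.1]{Guo2}, now applied to the factor of rank two.

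\textbf{Step 3: integrality and the main obstacle.} The map in Step~2 must be an isomorphism integrally, not only generically. This is the hardest point. When $c_v=0$ it is \cite[Lemma~5.1]{Guo2}. When $c_v>0$ the rank-two factor is quasi-canonical rather than canonical, so one must check that the obstruction map still has unit image. I would first attempt this as follows. Because $x_1$ lies in $H_{c_v}=\overline\psi_{c_v}\mathcal O_{E_v}$, the homomorphism $x_1$ factors through the isogeny $\psi_{c_v}$. The deformation theory of $x_1$ should then be governed by that of a homomorphism of canonical modules. Moreover, $\psi_{c_v}$ induces an isomorphism on the relevant Lie quotient line, since its kernel is étale on the component of the opposite signature. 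If this direct factorization fails, the fallback is to use the regularity of the local ring together with Lemma~\ref{canonical to quasi length} with $\ell$ large. Near $x_1$, the lifting length grows by exactly one per unit of distance. That growth forces $C_x$ to be cut out transversally by one equation, which shows that the comparison map has no zeros on $\mathcal P$.

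At split places, the same conclusion comes from the Serre--Tate coordinate. There the lifting equation is linear in $t_{r_v}$ to first order, so the argument is straightforward.
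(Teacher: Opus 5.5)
Your Steps~1--2 follow the paper's route: the Hodge module lives on the rank-two factor of Lemma~\ref{local product stabilization}, and then the Grothendieck--Messing conormal argument of \cite[Lemma~5.1]{Guo2} applies. Step~3, however, misplaces the difficulty and argues it incorrectly.

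Integrality has nothing to do with whether the rank-two factor is canonical or quasi-canonical. The identification $\mathcal O(-D_x)|_{D_x}\simeq\mathcal L|_{D_x}$ holds on the whole divisor $D_x$ in the chart, and is then restricted to $\mathcal P\subset D_x$. Let $\mathcal J$ be the ideal of $D_x$, and suppose the obstruction over $\mathcal O/\mathcal J^2$ takes values in a rank-one direct summand with generator $e$. Write the obstruction as $fe$ with $f\in\mathcal J$. Then $x$ lifts over $\mathcal O/(\mathcal J^2+(f))$, so maximality of the lifting locus gives $\mathcal J\subset\mathcal J^2+(f)$, and Nakayama gives $\mathcal J=(f)$. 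Thus the obstruction generates the conormal module, and the conductor never enters; this is why the paper computes the relevant image ``independently of the conductor.''

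Your factorization argument is also false as stated. $G_0$ and $G_{c_v}$ are formal, so $\ker\psi_{c_v}$ is connected over $R$, not \'etale. Hence $d\psi_{c_v}$ is not a unit for $c_v>0$; this is the same change of Hodge lattices that the Faltings height comparison in \cite{NT} measures. Moreover, that factorization exists only along $\mathcal P$, whereas the conormal bundle concerns transverse deformations. The lifting-length fallback says nothing about the conormal map either.

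What genuinely needs proof is the point your Step~1 glosses over at ramified places with $r_v\ge1$. There $\Lie\mathcal A$ has no integral eigenline decomposition, so ``projecting to the distinguished line'' is not available. The obstruction, being $\mathcal O_{E_v}$-linear on $\Fil\mathcal A_0$, a priori lands in an eigen-kernel of $\iota_{\mathcal A}(\varpi_{E_v})$. You must show two things:
\begin{enumerate}[(1)]
\item this eigen-kernel is identified with $\mathcal I_{\mathcal A}$;
\item $\mathcal I_{\mathcal A}$ is a rank-one direct summand on a neighborhood of the CM point, not merely after restriction to the CM component, since reflexive hulls do not commute with restriction.
\end{enumerate}

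The paper gets this from the matrix $\iota_{\mathcal A}(\varpi_{E_v})-\varpi_{E_v}=\binom{-\varpi_{E_v}}{1}(1,-\varpi_{E_v})$ on the rank-two factor, together with the vanishing of this operator on the positive factors. The image has a generator with a unit coordinate, so it is a direct summand equal to its double dual. Then \cite[Lemma~2.41]{LL} (modular case) or \cite[Lemma~5.1]{Guo2} (almost modular case) identifies it with the obstruction module. Your remark that ``the defect is supported on the positive factors'' does not establish this.

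This identification is exactly where $v\notin S_0(W)$ matters. At $S_0(W)$, with $u=\varpi_{E_v}$, the obstruction for $\lambda x$ lies in $\ker(\iota_{\mathcal A^\vee}(u)+u)$ on $\Lie\mathcal A^\vee$. Comparing it with $\mathcal I_{\mathcal A}^{\vee\vee}$ costs a factor $u$, as in Proposition~\ref{unit conormal comparison}.
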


\begin{proof}
The main idea is to check the image module defining $\mathcal L^{-1}$ and apply the lifting obstruction. At an inert place, the module $(J_{\mathfrak A_v}\Lie\mathcal A)_1$ comes entirely from the factor of rank two in Lemma~\ref{local product stabilization}, since the remaining factors contribute zero. At a split place, the same conclusion follows by considering the two idempotent components of $E_v=F_v\times F_v$. Also recall that when $F_v/\Q_p$ is ramified, we use \cite[Proposition~7.8]{LMZ} to identify the corresponding Lie quotient and polarization map on the relative model.

It remains to consider the ramified places with $r_v\ge1$. In this case, the operator $\iota_{\mathcal A}(\varpi_{E_v})-\varpi_{E_v}$ on the factor of rank two has matrix
\begin{equation*}
    \begin{pmatrix}
        -\varpi_{E_v}&\varpi_{E_v}^2\\
        1&-\varpi_{E_v}
    \end{pmatrix}
    =\binom{-\varpi_{E_v}}{1}(1,-\varpi_{E_v})
\end{equation*}
with respect to the basis $1,\varpi_{E_v}$. Its image is generated by $(-\varpi_{E_v},1)^t$, independently of the conductor, and it vanishes on the positive factors. Since the second coordinate of $(-\varpi_{E_v},1)^t$ is a unit, the image is a direct summand of rank one on this local chart. In particular, it equals its double dual. Then for the $\varpi_{E_v}$-modular model, \cite[Lemma~2.41]{LL} identifies it with the module in the lifting obstruction. While for the almost $\varpi_{E_v}$-modular model, the conormal argument of \cite[Lemma~5.1]{Guo2} applies on this open subset.

Now, for the $\varpi_{E_v}$-modular case, the lifting obstruction is described in \cite[Lemmas~2.37, 2.39--2.41]{LL}. In the almost $\varpi_{E_v}$-modular case with $r_v\ge1$, the preceding calculation shows that $\mathcal I_{\mathcal A}^{\vee\vee}=\mathcal I_{\mathcal A}$ on this local chart. Thus the conormal comparison of \cite[Lemma~5.1]{Guo2} also applies. In both cases, the lifting obstruction identifies the conormal line bundle $\mathcal O(-D_x)|_{D_x}$ with $\mathcal L|_{D_x}$. Dualizing and restricting to $\mathcal P$ proves the asserted isomorphism, with the same identification on the generic fiber.
\end{proof}

\subsubsection*{The integral comparison at $S_0(W)$}

For $v\in S_0(W)$, the divisor $D_x$ is defined by the lifting of $\lambda x:\mathcal A_0\to\mathcal A^\vee$. Here $D_x=\mathcal Y(x)$ by Lemma~\ref{unit horizontal divisor}, so its lifting obstruction belongs to the Lie algebra of $\mathcal A^\vee$. However, the Hodge bundle is defined from the image module on $\mathcal A$. We compare these two modules before computing the degree of the restricted line bundle.

For the next lemma and proposition, to be consistent with the conventions in the literature we write $u=\varpi_{E_v}$, and let $\lambda':\mathcal A^\vee\to\mathcal A$ satisfy $\lambda'\lambda=\iota_{\mathcal A}(u)$. Its differential will give the integral comparison. A frame below means a choice of bases for the locally free modules, and a polarized frame is chosen with the specified pairing matrices.

\begin{lemma}\label{unit image calculation}
On a completed local chart through the CM component, with coordinate ring $R$, the following modules are free of rank one. Their generators can be chosen so that
\begin{equation}\label{unit image modules}
    \mathcal I_{\mathcal A}^{\vee\vee}=R\cdot e,\qquad
    \ker\bigl(\iota_{\mathcal A^\vee}(u)+u\mid\Lie\mathcal A^\vee\bigr)=R\cdot e^\vee,
    \qquad d\lambda'(e^\vee)=-e.
\end{equation}
Moreover, $R\cdot e^\vee$ is a direct summand of $\Lie\mathcal A^\vee$. On the CM component, the image of $R\cdot e$ in the negative rational Lie summand is $u\Lie\overline G$.
\end{lemma}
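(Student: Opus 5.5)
The plan is to reduce the statement to linear algebra on the Dieudonné (or Lie) modules of the factors in the product decomposition over the CM component. Both modules in \eqref{unit image modules} will then be written down explicitly in a polarized frame. By the decomposition used in the proof of Proposition~\ref{unit local matching}, after a finite extension the polarized $p$-divisible group over the CM component is the product of $\overline G$ (for $M_v$) and the canonical lifts attached to the $u$-modular summands of $\Lambda_v^\perp$. By Tate's theorem these orthogonal idempotents extend to the completed local chart. They are compatible with $\iota$ and $\lambda$, so $\Lie\mathcal A$ and $\Lie\mathcal A^\vee$ split accordingly on the chart, at least along the CM component. We then compute each module factor by factor.

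\textbf{Step 1.} On each $u$-modular factor of signature $(1,0)$, the operator $\iota(u)-u$ acts on the rank-two Lie algebra in a frame $1,u$. Its image is generated by a vector with a unit coordinate, as in the matrix computation of Lemma~\ref{Hodge stabilization}. Because the signature is positive there, the Eisenstein condition forces the image $\mathcal I_{\mathcal A}$ to vanish on these factors, after taking the appropriate quotient. On the dual factors, the kernel of $\iota(u)+u$ is likewise killed. We therefore expect all of $\mathcal I_{\mathcal A}^{\vee\vee}$ and of $\ker(\iota_{\mathcal A^\vee}(u)+u)$ to come from the negative factor. The double dual is needed here because, in the almost $u$-modular model, $\mathcal I_{\mathcal A}$ need not be saturated off the chart. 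On the regular local ring $R$, a reflexive rank-one module is free, so $\mathcal I_{\mathcal A}^{\vee\vee}=R\cdot e$ for some generator $e$.

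\textbf{Step 2.} On $\Lie\overline G$, which is free of rank one over $\mathcal O_{E_v}\otimes R$ with the conjugate action, we write $\iota(u)-u$ as $-\bar u-u$-type multiplication. Its image is then $u\Lie\overline G$, saturated to a direct summand after the double dual. This gives the last assertion, namely that the image of $R\cdot e$ in the negative rational summand is $u\Lie\overline G$. On the dual side, $\ker(\iota_{\mathcal A^\vee}(u)+u)$ is cut out by a rank-one condition whose matrix, in a polarized frame, is the transpose of the one above. Its kernel is generated by a vector $e^\vee$ with a unit coordinate, which makes $R\cdot e^\vee$ a direct summand of $\Lie\mathcal A^\vee$.

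\textbf{Step 3.} Since $\lambda'\lambda=\iota_{\mathcal A}(u)$ and $\lambda$ is principal up to a unit on $\overline G$ (Proposition~\ref{unit local matching}), the differential $d\lambda'$ on the negative factor is multiplication by $u$ up to a unit. It therefore maps $e^\vee$ to a generator of $u\Lie\overline G$, that is, to a unit multiple of $e$. Rescaling $e$ absorbs the unit and yields $d\lambda'(e^\vee)=-e$.

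I expect the main obstacle to be Step 3 together with the saturation claim. One must check that the signs and units coming from the polarized frame of \cite[Section~2.3]{Yao} and \cite{RSZ1} are compatible with both actions. One must also check that $d\lambda'$ lands exactly in the double dual rather than in $u$ times it. I would first settle both points in the explicit frame on $\overline G$, and then spread the statement over the chart using flatness of $R$ together with the direct summand property.
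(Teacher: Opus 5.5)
There is a genuine gap. Everything you compute lives on the CM component ($b=0$ in the paper's coordinates). The lemma, however, is a statement over the full completed chart $R\simeq\mathcal O_{\breve E_v}[[b_1,\ldots,b_n]]$, and it is used that way. In Proposition~\ref{unit conormal comparison}, the direct-summand property of $\ker(\iota_{\mathcal A^\vee}(u)+u)$ and the identification $d\lambda'\colon R\cdot e^\vee\simeq\mathcal I_{\mathcal A}^{\vee\vee}$ are needed on $\mathcal O/\mathcal J^2$ for the divisor $D_x=\mathcal Y(x)$, which is much larger than the CM component.

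The step that fails is ``by Tate's theorem these orthogonal idempotents extend to the completed local chart.'' Tate's theorem extends homomorphisms that already exist over the generic point. The CM idempotents exist only over the generic point of the CM component, not over $\operatorname{Frac}(R)$. The locus where they lift is a proper closed formal subscheme: if they lifted over all of $R$, the universal object would split into the rigid canonical factors, and the chart could not carry $n$ deformation parameters.

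Nor can you ``spread'' the result from $b=0$ to $R$ by flatness, because kernels and images over $R$ are not determined by their specializations. In the paper's coordinates, $T^\vee+u=2uI_{n+1}+e^\vee(-b,-2u)$ vanishes identically at the closed point. So no rank or Nakayama argument can produce a rank-one direct-summand kernel; that property comes from the specific shape of the universal Lie action.

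The missing ingredient is an explicit description of the universal deformation on the whole chart. The paper writes down $T=uI_{n+1}+e(b,-2)$ and $T^\vee$ in a polarized frame, with $e=(\tfrac12Jb^t,u)^t$ and $e^\vee=(\tfrac12Jb^t,1)^t$. It then verifies the local-model identities, including the lattice inclusion $S$ and the signature, and matches the reduction mod $u$ with the chart of \cite{Smithling}. Smoothness from \cite{RSZ2} and the formal inverse function theorem identify this family with the completed local ring. Then:
\begin{itemize}
\item $\operatorname{im}(T-u)=R\cdot e$, because the row $(b,-2)$ has a unit entry;
\item $\ker(T^\vee+u)=R\cdot e^\vee$, by reading off the last coordinate and cancelling $u$;
\item $d\lambda'=\begin{pmatrix}I_n&-Jb^t\\0&-u\end{pmatrix}$ gives $d\lambda'(e^\vee)=-e$ over all of $R$.
\end{itemize}
Your Step~3 is therefore only a consistency check at $b=0$.

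Several local claims are also wrong.
\begin{itemize}
\item On the positive $u$-modular factors, $\iota(u)$ acts on $\Lie$ by $u$, so $\iota(u)-u$ is zero there. The matrix in Lemma~\ref{Hodge stabilization} belongs to the summand $\Lambda_{1,v}$ with $r_v\ge1$, which does not exist when $r_v=0$.
\item $\Lie\overline G$ is free of rank one over $R$, not over $\mathcal O_{E_v}\otimes R$.
\item Taking the double dual does not saturate anything. $R\cdot e$ is free, hence reflexive, but it is not a direct summand of $\Lie\mathcal A$, since $e$ vanishes at the closed point. Its restriction $u\Lie\overline G$ to the CM component is exactly the source of the factor $u$, and hence of the term $-\log N_v$, in Proposition~\ref{unit conormal comparison}.
\end{itemize}
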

\begin{proof}
These formulas actually follow from an explicit local model with the given polarization. We first identify its coordinates, then compute the two modules and the differential of $\lambda'$. Let $J$ be the block diagonal alternating matrix of size $n$, with $J^t=-J$ and $J^2=-I_n$. Set
\begin{equation*}
    R=\mathcal O_{\breve E_v}[[b_1,\ldots,b_n]],\qquad
    b=(b_1,\ldots,b_n),\qquad
    e=\binom{\frac12Jb^t}{u},\qquad
    e^\vee=\binom{\frac12Jb^t}{1}.
\end{equation*}
In a polarized frame, the two Lie actions are
\begin{equation*}
    T=uI_{n+1}+e(b,-2),\qquad
    T^\vee=uI_{n+1}+e^\vee(-b,-2u).
\end{equation*}

To check that these matrices describe the required local model, write $J_0=\operatorname{diag}(J,0)$ and let $Q=\operatorname{diag}(0_n,1)$. On the original crystal the action and the alternating pairing have matrices
\begin{equation*}
    \Pi=\begin{pmatrix}0&u^2I_{n+1}\\I_{n+1}&0\end{pmatrix},\qquad
    \begin{pmatrix}J_0&Q\\-Q&-u^2J_0\end{pmatrix}.
\end{equation*}
The first $n$ coordinates have the $\varpi_{E_v}$-modular pairing, and the last factor is self-dual. We choose the basis so that the filtration at the given CM point corresponds to $b=0$. In the dual basis, the lattice inclusion is
\begin{equation*}
    S=\begin{pmatrix}Q&u^2(I_{n+1}-Q)\\I_{n+1}-Q&Q\end{pmatrix}.
\end{equation*}
We define the Hodge filtrations on the two crystals to be the submodules generated by the columns of $\binom{-T}{I_{n+1}}$ and $\binom{-T^\vee}{I_{n+1}}$, respectively. Using $bJb^t=0$, direct multiplication gives
\begin{equation*}
    \begin{gathered}
    T^2=(T^\vee)^2=u^2I_{n+1},\qquad
    T^t(J_0+Q)=(J_0+Q)T^\vee,\\
    S\binom{-T}{I_{n+1}}
      =\binom{-T^\vee}{I_{n+1}}\bigl(Q-(I_{n+1}-Q)T\bigr).
    \end{gathered}
\end{equation*}
These identities show that the filtrations are stable under the $\mathcal O_{E_v}$-action and satisfy the conditions imposed by the polarization and the lattice inclusions. The remaining inclusion follows from periodicity. Moreover, the action of $\varpi_{E_v}$ on the Lie quotients has characteristic polynomial $(t-u)^n(t+u)$, as required by the signature condition.

Moreover, the filtrations above define a family in the naive local model over $R$. Since $R$ is flat over $\mathcal O_{\breve E_v}$ and the flat local model is the closure of the generic fiber, this family also lies in the flat local model. Modulo $u$, the matrix $-T^\vee$ becomes
\begin{equation*}
    -T^\vee=
    \begin{pmatrix}
        \frac12Jb^tb&0\\
        b&0
    \end{pmatrix}.
\end{equation*}
With alternating matrix $-J$, this is exactly the chart in \cite[(3.1.5)--(3.1.10)]{Smithling}, with parameters $b_1,\ldots,b_n$. The flat local model is smooth of relative dimension $n$ here by \cite[Theorem~7.5 and its proof]{RSZ2}. Thus its completed local ring and $R$ are power series rings over $\mathcal O_{\breve E_v}$ with $n$ parameters. The filtrations above define a morphism from $\operatorname{Spf}R$ to the completed local model. Since it induces an isomorphism on the relative cotangent spaces at $b=0$, the formal inverse function theorem shows that this morphism is an isomorphism. Consequently, by the local model diagram, $T$ and $T^\vee$ give the actions of $\varpi_{E_v}$ on the corresponding Lie algebras, and the CM point corresponds to $b=0$.

The modules can now be read from these matrices. The row $(b,-2)$ contains a unit, so $\operatorname{im}(T-u)=R\cdot e$ is free and equals its double dual. On the CM component its inclusion sends $e$ to $u$ times the negative Lie generator. For the dual action, the last coordinate of $e^\vee$ is $1$. If $(T^\vee+u)z=0$, its last coordinate and cancellation of $u$ give $z=e^\vee z_{n+1}$. Hence its kernel is $R\cdot e^\vee$, an invertible direct summand. This argument also holds after any base change on which $u$ is a nonzero divisor.

Finally, the lattice inclusion given by $S$ induces
\begin{equation*}
    d\lambda=
    \begin{pmatrix}uI_n-\frac12Jb^tb&0\\-b&1\end{pmatrix},\qquad
    d\lambda'=
    \begin{pmatrix}I_n&-Jb^t\\0&-u\end{pmatrix}.
\end{equation*}
Indeed, the first matrix equals $Q+T^\vee(I_{n+1}-Q)$, while the second matrix follows from $\lambda'\lambda=\iota_{\mathcal A}(u)$, since $d\lambda'\,d\lambda=T$. They satisfy
\begin{equation*}
    d\lambda(e)=u e^\vee,\qquad d\lambda'(e^\vee)=-e.
\end{equation*}
These identities prove \eqref{unit image modules}. In particular, the image sheaf is computed on the full chart and then restricted to the CM component.
\end{proof}

Lemma~\ref{unit image calculation} now allows us to compare the normal bundle of $\mathcal Y(x)$ with the Hodge bundle. For a nonzero $x$ whose divisor contains the generic CM point, the generic polarization gives the rational isomorphism
\begin{equation*}
    \kappa_x:\mathcal O(D_x)|_{\mathcal P}\dashrightarrow
       \mathcal L^{-1}|_{\mathcal P}.
\end{equation*}
The integral comparison below uses this fixed generic map.

\begin{proposition}\label{unit conormal comparison}
At $v\in S_0(W)$, the map $\varpi_{E_v}\kappa_x$ extends to an integral isomorphism. Thus the normalized finite degree of $\operatorname{div}(\kappa_x)$ above $v$ is $-\log N_v$, independently of $x$ and of the even dimension $n$.
\end{proposition}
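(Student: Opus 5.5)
The plan is to compute everything on the completed local chart $R$ of Lemma~\ref{unit image calculation} along the CM component, and to compare the two line bundles through their descriptions as modules. First I identify $\mathcal O(D_x)|_{\mathcal P}$. By Lemma~\ref{unit horizontal divisor}, $D_x=\mathcal Y(x)$ is the lifting locus of $\lambda x:\mathcal A_0\to\mathcal A^\vee$. The deformation-theoretic lifting obstruction sends the fixed generator of $\Fil\mathcal A_0$ to $\Lie\mathcal A^\vee$. Because $\lambda x$ is $\mathcal O_{E_v}$-linear and $\mathcal A_0$ has the opposite signature, the obstruction lands in $\ker(\iota_{\mathcal A^\vee}(u)+u)=R\cdot e^\vee$. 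As in the conormal argument of \cite[Lemma~5.1]{Guo2} and \cite[Lemmas~2.37, 2.39--2.41]{LL}, the fact that $R\cdot e^\vee$ is a direct summand makes the obstruction a section of $\operatorname{Hom}(\Fil\mathcal A_0,R\cdot e^\vee)$ cutting out $D_x$. Hence the conormal bundle $\mathcal O(-D_x)|_{D_x}$ is identified integrally with $\operatorname{Hom}(\Fil\mathcal A_0,R e^\vee)^{-1}$, so
\begin{equation*}
\mathcal O(D_x)|_{\mathcal P}\simeq\operatorname{Hom}(\Fil\mathcal A_0,Re^\vee)|_{\mathcal P}.
\end{equation*}

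Second, I compare this with $\mathcal L^{-1}=\operatorname{Hom}(\Fil\mathcal A_0,\mathcal I_{\mathcal A}^{\vee\vee})=\operatorname{Hom}(\Fil\mathcal A_0,Re)$ using $d\lambda'$ and $d\lambda$. By \eqref{unit image modules}, $d\lambda'(e^\vee)=-e$ is an integral isomorphism $Re^\vee\to Re$. However, the generic identification $\kappa_x$ is built from the rational polarization $\lambda$, i.e.\ from $\lambda x$ versus $x$. The relevant map from the $\mathcal A$-side to the $\mathcal A^\vee$-side is $d\lambda$, and $d\lambda(e)=ue^\vee$. Equivalently, $\lambda'=u\lambda^{-1}$ on the generic fiber, so $\kappa_x$ corresponds to $u^{-1}d\lambda'$ up to sign. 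Therefore $u\kappa_x=\pm d\lambda'$ on these generators, which is an integral isomorphism. The main obstacle is this bookkeeping: I must check that $\kappa_x$, defined by the generic polarization, really is $d\lambda^{-1}=u^{-1}d\lambda'$ and not $d\lambda'$ itself. I must also check that no additional unit or power of $u$ enters from the last line of Lemma~\ref{unit image calculation}, where $Re$ maps to $u\Lie\overline G$ on the CM component. I would first verify this directly on the chart at $b=0$, where $e=(0,u)^t$ and $e^\vee=(0,1)^t$, so $d\lambda(e)=ue^\vee$ is visible explicitly. I would then use flatness of $R$ over $\mathcal O_{\breve E_v}$, so that the comparison is determined generically.

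Finally, I compute the degree. Since $u\kappa_x$ is an integral isomorphism, $\operatorname{div}(\kappa_x)=-\operatorname{div}(u)$ on the CM component above $v$. With the normalization \eqref{arithmetic normalization}, the divisor of $u=\varpi_{E_v}$ on the ramified base has length one over $\mathcal O_{E_v}$. Its normalized degree is therefore $\frac{1}{e(E_v/F_v)}\cdot 2\cdot\frac12\log N_v$-type bookkeeping, which gives $-\log N_v$ after accounting for the two Serre branches of weight $1/2$, exactly as in the normalization before Lemma~\ref{canonical to quasi length}. Nothing in the chart depends on $x$ or on $n$ beyond the generators $e$ and $e^\vee$, so the degree is independent of $x$ and of the even dimension $n$.
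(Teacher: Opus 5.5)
Your proposal follows the paper's proof. The Grothendieck--Messing obstruction for $\lambda x$ identifies $\mathcal O(D_x)|_{D_x}$ with $\operatorname{Hom}(\Fil\mathcal A_0,R\cdot e^\vee)$, and composition with $d\lambda'$ carries this integrally to $\operatorname{Hom}(\Fil\mathcal A_0,R\cdot e)=\mathcal L^{-1}$. The generic map is then $-u\kappa_x$. The paper reads off this factor from $\lambda'\lambda x=x\iota_0(u)$ with $\iota_0(u)=-u$ on $\Fil\mathcal A_0$, which is equivalent to your $d\lambda^{-1}=-u^{-1}d\lambda'$ on $R\cdot e^\vee$.

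Two small points. First, the paper also records that $\mathcal O/\mathcal J^2$ is flat, because $D_x$ is relative Cartier. Hence $u$ is a nonzero divisor on the first-order neighborhood where the obstruction lives, and the kernel computation of Lemma~\ref{unit image calculation} applies there. Second, your degree bookkeeping should simply say that $\ord_{E_v}(u)=1$ on each normalized component; the product you wrote evaluates to $\tfrac12$, not $1$.
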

\begin{proof}
The obstruction to lifting $\lambda x$ identifies the normal bundle of $D_x=\mathcal Y(x)$. We then compare this identification with $\kappa_x$ using $d\lambda'$.

Let $\mathcal J$ be the Cartier ideal of $D_x$. Since $D_x$ is relative Cartier by Lemma~\ref{unit horizontal divisor}, $\mathcal O/\mathcal J$ is flat over the base and $\mathcal J/\mathcal J^2$ is invertible over $\mathcal O/\mathcal J$. Hence the exact sequence
\begin{equation*}
    0\longrightarrow\mathcal J/\mathcal J^2
      \longrightarrow\mathcal O/\mathcal J^2
      \longrightarrow\mathcal O/\mathcal J\longrightarrow0
\end{equation*}
also gives flatness of $\mathcal O/\mathcal J^2$.

On this first infinitesimal neighborhood, the Grothendieck--Messing obstruction for $\lambda x$ is constructed as in the proof of \cite[Proposition~4.1]{Howard} and \cite[Lemma~2.39]{LL}, on the adic quotients followed by the inverse limit. The filtration $\Fil\mathcal A_0$ has $u$-eigenvalue $-u$. Since $u$ is a nonzero divisor on $\mathcal O/\mathcal J^2$, Lemma~\ref{unit image calculation} shows that the obstruction lies in
\begin{equation*}
    \operatorname{Hom}(\Fil\mathcal A_0,R\cdot e^\vee)\otimes\mathcal J/\mathcal J^2.
\end{equation*}
Moreover, $R\cdot e^\vee$ is a direct summand. Since both modules above have rank one, the lifting obstruction is locally given by a single equation. As its vanishing defines $D_x$, this equation generates $\mathcal J/\mathcal J^2$ on the first infinitesimal neighborhood. Dualizing the resulting isomorphism gives
\begin{equation}\label{unit divisor normal bundle}
    \mathcal O(D_x)|_{D_x}
      \simeq\operatorname{Hom}(\Fil\mathcal A_0,R\cdot e^\vee)|_{D_x}.
\end{equation}

Next, composition with $d\lambda'$ identifies $R\cdot e^\vee$ integrally with $R\cdot e$ by \eqref{unit image modules}. Its generic map is determined by
\begin{equation*}
    \lambda'\lambda x=x\iota_0(u).
\end{equation*}
Indeed, $\iota_0(u)$ acts as $-u$ on $\Fil\mathcal A_0$, so the induced map in \eqref{unit divisor normal bundle} is $-u\kappa_x$. As $R\cdot e=\mathcal I_{\mathcal A}^{\vee\vee}$, this proves that $u\kappa_x$ is an integral isomorphism.

Finally, since $u\kappa_x$ is an integral isomorphism, the order of $\kappa_x$ on each normalized CM component is $-\operatorname{ord}(u)$. As $u$ is a uniformizer of $E_v$, the normalized order is $-1$. Hence its arithmetic degree is $-\log N_v$ by \eqref{arithmetic normalization}.
\end{proof}

\subsubsection*{The restricted degree}

Now we return to the full contribution $i(x,\mathcal P)$ in \eqref{height decomposition}. Lemma~\ref{Hodge stabilization} and Proposition~\ref{unit conormal comparison} have already given its finite comparison with $\mathcal L^{-1}$. Combining the preceding comparisons with the metric in \eqref{central limit}, we obtain the following arithmetic adjunction formula, analogous to \cite[Theorem~9.3]{YZ} for CM points on Shimura curves. The additional terms at $S_0(W)$ arise from Proposition~\ref{unit conormal comparison}.

\begin{proposition}\label{restricted degree proposition}
For $0\ne x\in W^\perp(E)$ contributing to the CM term in \eqref{height decomposition},
\begin{equation}\label{adjunction integral equation}
    i(x,\mathcal P)+\log|q(x)|_f+dH_{n-1}
      =-\LL\cdot\mathcal P+\sum_{v\in S_0(W)}\log N_v.
\end{equation}
\end{proposition}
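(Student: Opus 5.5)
We use the decomposition \eqref{restricted divisor degree}, $i(x,\mathcal P)=\widehat{\deg}(\overline{\mathcal O(C_x)}|_{\mathcal P})+(R_x,\mathcal P)_{\mathrm{fin}}$. The first step is to combine the two terms into the degree of $\overline{\mathcal O(D_x)}|_{\mathcal P}$. Here the metric on $\mathcal O(D_x)$ is the Green function with the logarithmic singularity along the branch $C_x$ removed. The finite part of $(R_x,\mathcal P)$ is exactly the order along $\mathcal P$ of the section of $\mathcal O(R_x)$. Since $R_x$ carries no Green contribution along $\mathcal P$ generically, $i(x,\mathcal P)$ becomes the arithmetic degree of the restriction of $\mathcal O(D_x)$, computed using the rational map $\kappa_x:\mathcal O(D_x)|_{\mathcal P}\dashrightarrow\mathcal L^{-1}|_{\mathcal P}$. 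We then write
\begin{equation*}
    i(x,\mathcal P)=\widehat{\deg}\bigl(\LL^{-1}|_{\mathcal P}\bigr)+\sum_{v\nmid\infty}(\text{finite order of }\kappa_x)+\sum_{\sigma}(\text{archimedean log-norm of }\kappa_x),
\end{equation*}
where $\widehat{\deg}(\LL^{-1}|_{\mathcal P})=-\LL\cdot\mathcal P$.

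The finite orders are supplied by earlier results. At $v\notin S_0(W)$, Lemma~\ref{Hodge stabilization} shows that $\kappa_x$ is an integral isomorphism, so it contributes nothing. At $v\in S_0(W)$, Proposition~\ref{unit conormal comparison} gives the normalized degree $-\log N_v$ for $\operatorname{div}(\kappa_x)$. Passing to the degree of the line bundle through $\kappa_x$ flips this sign and produces the term $+\sum_{v\in S_0(W)}\log N_v$. At $S_2$, the correction from requiring $x/\varpi_v$ to lift is already separated into the last line of \eqref{height decomposition}. Here we therefore use the divisor defined by lifting $x$, as the paper specifies. We must also verify that the normalizations \eqref{arithmetic normalization}, including both places of $E$ above split $v$, the Serre weights at ramified places and the multiplicities of \eqref{CM cycle normalization}, agree with those used in the two cited comparisons.

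The archimedean step is the main obstacle. At each of the $d$ real places, together with its conjugate embeddings, we compare the regularized Green metric with the Hodge metric $-q$. Near the CM point, the Green function of $Z(x)$ is $2Q_{n,0}(1+\rho)$ summed with the factor two from conjugate embeddings. Here $\rho=|\langle z,x\rangle|^2/(-q(z)q_v(x))$ in suitable normalization. The section $\langle\cdot,x\rangle$ of $\mathcal L^{-1}$ has squared norm $|\langle z,x\rangle|^2/(-q(z))$. Subtracting $-\log\rho$ and applying \eqref{central limit} should give $-\log\|\kappa_x\|^2$ at $\rho=0$ equal to $H_{n-1}+\log q_v(x)$ at each real place, after fixing signs. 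Summing over the $d$ places gives $dH_{n-1}+\log|q(x)|_\infty$. The product formula $\log|q(x)|_\infty=-\log|q(x)|_f$ then yields the term $\log|q(x)|_f$ on the left of \eqref{adjunction integral equation}. I expect the sign and the factor-two bookkeeping here to be the delicate point. I would first check it against the case of \cite[Theorem~9.3]{YZ} and \cite[Lemma~5.1]{Guo2}, where the same identity appears with $S_0(W)=\varnothing$, and then rearrange the terms to obtain \eqref{adjunction integral equation}.
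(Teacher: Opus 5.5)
Your route is the paper's: the composite $\mathcal O(C_x)|_{\mathcal P}\to\mathcal O(D_x)|_{\mathcal P}\dashrightarrow\mathcal L^{-1}|_{\mathcal P}$, whose finite divisor is $(R_x,\mathcal P)_{\mathrm{fin}}-\sum_{v\in S_0(W)}\log N_v$ by Lemma~\ref{Hodge stabilization} and Proposition~\ref{unit conormal comparison}. Your passage through $\overline{\mathcal O(D_x)}|_{\mathcal P}$ is the same step. Your finite bookkeeping is correct, including the sign flip at $S_0(W)$ and the use of the divisor of $x$ itself at $S_2$.

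The archimedean step, however, is wrong as stated. Since this proposition is pure bookkeeping, that step carries the content. Take your own normalization: the Green function $2Q_{n,0}(1+\rho)$ for the conjugate pair at $v$, and $\|\langle\cdot,x\rangle\|^2=\rho\,q_v(x)$. Then \eqref{central limit} says $2Q_{n,0}(1+\rho)=-\log\rho-H_{n-1}+o(1)$, hence
\begin{equation*}
    -\log\|\kappa_x\|^2=-\log\bigl(\rho\,q_v(x)\bigr)-2Q_{n,0}(1+\rho)\longrightarrow H_{n-1}-\log q_v(x)\qquad(\rho\to0^+).
\end{equation*}
So $H_{n-1}$ and $\log q_v(x)$ necessarily enter with opposite signs; you have $H_{n-1}+\log q_v(x)$. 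Per embedding of $E$, the correct limit is the paper's value $\log\|\kappa_x\|_\sigma=\frac12\log q_v(x)-\frac12H_{n-1}$.

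Summing over the $d$ real places, the archimedean part of $\widehat{\deg}\operatorname{div}(\kappa_x)$ is $dH_{n-1}-\log|q(x)|_\infty=dH_{n-1}+\log|q(x)|_f$. Then $i(x,\mathcal P)=-\LL\cdot\mathcal P-\widehat{\deg}\operatorname{div}(\kappa_x)$, after absorbing $(R_x,\mathcal P)_{\mathrm{fin}}$, gives \eqref{adjunction integral equation}.

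Your intermediate value $dH_{n-1}+\log|q(x)|_\infty$ equals $dH_{n-1}-\log|q(x)|_f$ by the product formula. Followed literally, it puts $-\log|q(x)|_f$ on the left of \eqref{adjunction integral equation}. Your final sentence asserts $+\log|q(x)|_f$, which does not follow from your own value, so two sign errors cancel. With the local limit corrected as above, the argument is exactly the paper's.
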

\begin{proof}
We apply the arithmetic degree formula to the rational comparison
\begin{equation*}
    \mathcal O(C_x)|_{\mathcal P}\longrightarrow
    \mathcal O(D_x)|_{\mathcal P}\dashrightarrow\mathcal L^{-1}|_{\mathcal P}.
\end{equation*}
By Lemma~\ref{Hodge stabilization} and Proposition~\ref{unit conormal comparison}, its normalized finite divisor degree is
\begin{equation*}
    (R_x,\mathcal P)_{\mathrm{fin}}-
       \sum_{v\in S_0(W)}\log N_v.
\end{equation*}
At each embedding of $E$, the calculation in \cite[Lemma~5.1]{Guo2} together with \eqref{central limit}, gives logarithmic norm $\frac12\log q(x)-\frac12H_{n-1}$. The product formula makes the sum of these norms $-\log|q(x)|_f-dH_{n-1}$. Combining the finite and archimedean degrees with the definition in \eqref{restricted divisor degree} proves \eqref{adjunction integral equation}.
\end{proof}

\subsubsection*{The correction at the two split places}

Recall that our Proposition~\ref{restricted degree proposition} computes the contribution of the full lifting divisor $D_x$. In \eqref{degenerate phi}, the summand $1_{\varpi_v\Lambda_v}$ imposes the stronger condition that $x/\varpi_v$ lift. We must therefore subtract the residual intersections removed by this condition. This is the last line of \eqref{height decomposition}, which we now compute.

Fix $v\in S_2$, and let $\mathfrak p_v$ be its prime ideal in $\mathcal O_F$. The subgroup $\mathcal A_0[\mathfrak p_v]$ means $\mathcal A_0[\mathfrak p_v\mathcal O_E]$ and includes the torsion at both places above $v$. The local function $1_{\varpi_v\Lambda_v}$ imposes
\begin{equation}\label{split kernel condition}
    x|_{\mathcal A_0[\mathfrak p_v]}=0.
\end{equation}
Indeed, this is the Tate module condition on the generic fiber. Over the integral base it is equivalent to the factorization
\begin{equation*}
    x:\mathcal A_0\longrightarrow\mathcal A_0/\mathcal A_0[\mathfrak p_v]\longrightarrow\mathcal A.
\end{equation*}
The factorization is unique as a morphism of fppf sheaves. By construction, it is compatible with the finite forgetful map. Indeed, multiplication by $\varpi_v$ identifies $\mathcal A_0/\mathcal A_0[\mathfrak p_v]$ with $\mathcal A_0$ on the relative factor at $v$. Thus the factorization above is equivalent to requiring $x/\varpi_v$ to extend as a homomorphism.

\begin{proposition}\label{split arithmetic correction}
For the Schwartz function \eqref{degenerate phi}, the arithmetic contribution along the CM cycle differs from the sum formed with the full lifting divisors by
\begin{equation}\label{split arithmetic correction formula}
    -2\log N_v\sum_{x\in W^\perp(E)\setminus\{0\}}1_{\varpi_v\Lambda_v^\perp}(x)1_{\varpi_v^2\mathcal O_{F_v}^\times}(q(x))\,r(g^v)\Phi^v(x,0),
\end{equation}
for $g_f\in K_f$.
\end{proposition}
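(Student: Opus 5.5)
We plan to work one place $v\in S_2$ at a time, at $g_f=1$, and only with the terms indexed by $0\ne x\in W^\perp(E)$. The other terms need no correction. Indeed, at a split place the ordinary decomposition forces every special homomorphism to have zero $W_v$-component, as recalled before \eqref{perpendicular proper global sum}. Hence only vectors in $W_v^\perp$ contribute.

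\textbf{Step 1: reduce to the difference divisor.} Write \eqref{degenerate phi} as $\Phi_v=1_{\varpi_v\Lambda_v}\cdot 1_{\varpi_v^2\mathcal O_{F_v}^\times}(q)-(1+N_v^n+N_v^{2n})^{-1}1_{\Lambda_v}\cdot 1_{\varpi_v^2\mathcal O_{F_v}^\times}(q)$. The second summand uses the full lifting divisor of $x$, so it needs no correction. For the first summand, \eqref{split kernel condition} shows that the divisor selected is the lifting divisor $D_{x/\varpi_v}$ of $x/\varpi_v$, while the sum with full lifting divisors uses $D_x$. Since $q(x)\in\varpi_v^2\mathcal O_{F_v}^\times$ and the relevant $x$ lie in $\varpi_v\Lambda_v$, the restriction is $x\in\varpi_v\Lambda_v^\perp$ with $x/\varpi_v$ of unit norm. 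The correction is therefore the coefficient $1_{\varpi_v\Lambda_v^\perp}(x)1_{\varpi_v^2\mathcal O_{F_v}^\times}(q(x))$ times the difference of $D_{x/\varpi_v}$ and $D_x$, each restricted to $\mathcal P$.

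\textbf{Step 2: compare the two restrictions.} On the generic fiber the two divisors coincide. Both contain the CM component, and both have the same Green function up to the factor fixed by the admissible normalization. Their difference is therefore a vertical divisor above $v$. We use Lemma~\ref{Hodge stabilization} for both $x$ and $x/\varpi_v$. Each of $\mathcal O(D_x)|_{\mathcal P}$ and $\mathcal O(D_{x/\varpi_v})|_{\mathcal P}$ is integrally isomorphic to $\mathcal L^{-1}|_{\mathcal P}$. The generic comparison maps $\kappa_x$ and $\kappa_{x/\varpi_v}$ differ by the scalar $\varpi_v$, because the obstruction map is linear in the homomorphism. Hence the two restricted degrees differ by $\operatorname{ord}(\varpi_v)$, computed at both places of $E$ above $v$. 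Normalizing by \eqref{arithmetic normalization}, each place of $E$ above $v$ contributes $\log N_v$, so the total is $2\log N_v$, with the sign making the $D_{x/\varpi_v}$ term smaller. We must also check the residual proper part $R_x$. Here we use the ordinary deformation description from Proposition~\ref{split general matching}, where $r_v=0$ since $\Lambda_v=\Lambda_v^\perp\perp M_v$. With $r_v=0$ the extension parameter is a unit and no finite proper length appears. Thus $R_x$ and $R_{x/\varpi_v}$ agree and cancel.

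\textbf{Step 3: sum over $x$.} We multiply by $r(g^v)\Phi^v(x,0)$ and sum over $x$. Invariance under $K_f$ extends the result from $g_f=1$, which yields \eqref{split arithmetic correction formula}. The archimedean parts cancel because the Green functions of the two divisors agree. We expect the main obstacle to be Step 2. The difficulty is justifying that the factorization through $\mathcal A_0/\mathcal A_0[\mathfrak p_v]$ changes the integral comparison by exactly $\varpi_v$ at each of the two places of $E$ above $v$, and that no extra vertical components appear. We would verify this on the ordinary Serre--Tate coordinates. There, lifting $x$ versus $x/\varpi_v$ amounts to the vanishing of $[\,\cdot\,](t)$, and the conormal generators differ by the unit-times-$\varpi_v$ factor coming from $[\varpi_v]'(0)=\varpi_v$.
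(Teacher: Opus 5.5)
Your final number is right, and the local fact you single out at the end ($[\varpi_v]'(0)=\varpi_v$) is exactly what the paper uses. However, Step~2 as written misplaces the correction, and two of its claims are false in a way that happens to cancel.

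In a Serre--Tate coordinate $z$ at a point of the CM cycle, $D_{x/\varpi_v}=\{z=0\}$ and $D_x=\{[\varpi_v]_G(z)=0\}$, with $[\varpi_v]_G(z)=zG_v(z)$ and $G_v(0)=\varpi_v$. So the two divisors do \emph{not} agree on the generic fiber. Their difference $\{G_v(z)=0\}$ is not vertical: it is flat, and its generic points are the nonzero $\varpi_v$-torsion points of $G$.

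Moreover, $x/\varpi_v\in\Lambda_v^\perp$ lifts over the whole CM component. Hence the branch through the generic CM point is $C_x=D_{x/\varpi_v}$, and the residual divisor is exactly $R_x=\{G_v(z)=0\}$, whereas $D_{x/\varpi_v}$ has no residual part near $\mathcal P$. In \eqref{restricted divisor degree}, the terms $\widehat{\deg}(\overline{\mathcal O(C)}|_{\mathcal P})$ for $x$ and for $x/\varpi_v$ therefore coincide (same branch, same regularized metric). The whole correction is $(R_x,\mathcal P)_{\mathrm{fin}}$. On each normalized CM component $G_v$ restricts to $\varpi_v$, so the length is $\ord_R(\varpi_v)$, giving $\log N_v$ at each of the two places of $E$ above $v$. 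This is precisely the paper's proof.

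Your assertion that $R_x$ and $R_{x/\varpi_v}$ agree and cancel is therefore false. Proposition~\ref{split general matching} only concerns homomorphisms with finite lifting locus, i.e. the $m^0_{\Phi_v}$ terms, which do vanish for $r_v=0$. It says nothing about residual branches of a homomorphism that lifts over the whole CM component.

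Part of your Step~2 can be salvaged as a legitimate alternative. Read it as comparing the \emph{full} restricted bundles $\overline{\mathcal O(D_x)}|_{\mathcal P}$ and $\overline{\mathcal O(D_{x/\varpi_v})}|_{\mathcal P}$, and drop any separate residual term. The full bundle already contains $\mathcal O(R_x)$, so adding $(R_x,\mathcal P)_{\mathrm{fin}}$ on top would double count; your cancellation claim hides exactly this.

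In that form you need Lemma~\ref{Hodge stabilization} for both $x$ and $x/\varpi_v$. You also need more than linearity of the obstruction to compare $\kappa_x$ with $\kappa_{x/\varpi_v}$, because the two obstruction classes live on different first-order thickenings $\mathcal O/\mathcal J_x^2$ and $\mathcal O/\mathcal J_{x/\varpi_v}^2$, where $\mathcal J_x\subset\mathcal J_{x/\varpi_v}$ are the ideals of the two divisors. Functoriality of the obstruction along the map of thickenings, combined with linearity, shows the following: under the two Hodge comparisons, the inclusion $\mathcal O(-D_x)|_{\mathcal P}\to\mathcal O(-D_{x/\varpi_v})|_{\mathcal P}$ becomes multiplication by $\varpi_v$ on $\mathcal L|_{\mathcal P}$, which yields the same $\log N_v$ per place.

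This variant avoids writing down $G_v$ explicitly. It is longer than the paper's direct factorization, however, which needs neither the Hodge comparison nor the deformation-theoretic obstruction.
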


\begin{proof}
This correction is actually the intersection with the part of the full lifting divisor removed by \eqref{split kernel condition}. At one place of $E$ above $v$, the two equations are given by \cite[Proposition~5.9]{Guo2}. Indeed, on the support of the first summand of \eqref{degenerate phi}, the homomorphism $x/\varpi_v$ has unit norm, so both its split components are primitive. By the moduli interpretation of the RSZ integral model and the ordinary deformation calculation in \cite[Proposition~5.9]{Guo2}, we may choose a Serre--Tate coordinate $z$ near a geometric point of the CM cycle such that
\begin{equation*}
    x/\varpi_v\text{ lifts}\ \Longleftrightarrow\ z=0, \qquad x\text{ lifts}\ \Longleftrightarrow\ [\varpi_v]_G(z)=0,
\end{equation*}
where $G$ is a formal $\mathcal O_{F_v}$-module of height one.
Both divisors are flat. Consequently they equal their horizontal closures and give the finite admissible extensions by \cite[Lemma~A.2.4]{Qiu}.

Since both terms use the Green function associated with $x$, their regularized metrics agree along $z=0$. For the finite intersection, write
\begin{equation*}
    [\varpi_v]_G(z)=zG_v(z),\qquad G_v(0)=\varpi_v.
\end{equation*}
Thus the complete lifting divisor has the additional factor $G_v(z)$, whose restriction to the CM cycle equals $\varpi_v$. Thus, on each normalized CM component with valuation ring $R$, the intersection with the divisor $G_v(z)=0$ has length
\begin{equation*}
    \operatorname{length}_R(R/(\varpi_v))
    =\operatorname{ord}_R(\varpi_v).
\end{equation*} 
After normalization, the finite intersection decreases by $\log N_v$ at each of the two places of $E$ above $v$, and hence by $2\log N_v$ in total.

Finally, the factorization through $\mathcal A_0/\mathcal A_0[\mathfrak p_v]$ preserves the finite maps and stabilizers. The projection formula therefore gives the multiplicities in \eqref{arithmetic normalization}, as in \cite[Lemma~8.4 and Proposition~8.5]{YZ}. Note that only the first summand of \eqref{degenerate phi} is affected. Thus, multiplying the correction $-2\log N_v$ by its characteristic functions and the factors away from $v$ and summing over $x$ gives \eqref{split arithmetic correction formula}. This local correction is Schwartz and is fixed by $K_f$, so the formula holds for $g_f\in K_f$.
\end{proof}

The local function in \eqref{split arithmetic correction formula} is $c_{\Phi_v}(1,x)$ in \eqref{split zero derivative}, which occurs with a minus sign in the derivative series \eqref{analytic formula}. Hence we conclude that the additional arithmetic and analytic terms at each modified split place cancel for every $x$.

\section{Comparison of the two series}\label{comparison section}

We now add the derivative formula in Theorem~\ref{derivative decomposition theorem} and the height formula in Theorem~\ref{height decomposition theorem}. The local results of Section~\ref{local intersection subsection} combine the proper terms into theta series. Moreover, Proposition~\ref{restricted degree proposition} gives the remaining restricted degree, and Proposition~\ref{split arithmetic correction} cancels the two split corrections. We will combine all these computations into Proposition~\ref{direct comparison proposition}. Then we apply the Key lemma to determine the height and deduce both main theorems.

\subsection{The difference series}

Following the strategy of \cite{YZZ,YZ,Yuan,Guo3}, we consider the following difference series
\begin{equation}\label{difference series definition}
    \begin{aligned}
    \mathcal D(g,\Phi)&=\Pr I'(0,g,\Phi)+\mathcal H(g,\Phi)\\
    &=\Pr I'(0,g,\Phi)+\left(\widehat{\mathcal Z}_*(g,\Phi)-\frac{\deg_L Z_*(g,\Phi)}{\deg_L X_U}\LL\right)\cdot(\mathcal P-\widehat\xi).
    \end{aligned}
\end{equation}
Proposition~\ref{cuspidal height pairing} makes \eqref{difference series definition} a holomorphic cusp form and removes its geometric degree and Hodge pairing. Thus
\begin{equation*}
    \mathcal D(g,\Phi)=\Pr I'(0,g,\Phi)+\widehat{\mathcal Z}_*(g,\Phi)\cdot\mathcal P,
    \qquad g_f\in K_f.
\end{equation*}

Consider the full theta series defined in Section~\ref{ASW introduction}. Since $[E^1]$ is compact, the archimedean Gaussians are invariant under the torus $E^1$, and the finite functions have open stabilizers, the normalized integral over $[E^1]$ is a finite weighted sum of theta series. We choose $K_f$ to fix all finite functions in this sum.

At finite component $1$, the sum \eqref{Omega definition} satisfies
\begin{equation}\label{theta factorization}
    \Omega_\Phi(g_\infty,1)=r_{\BW_\infty}(g_\infty)\Phi_{\BW,\infty}(0)\,
    \theta(g_\infty,1,\Phi^\perp).
\end{equation}
Here $\Phi_{\BW,\infty}$ is the standard Gaussian. The zero vector contributes nothing because $\Phi_f(0)=0$. This additional Gaussian factor will separate $\Omega_\Phi$ from automorphic forms.

\begin{proposition}\label{direct comparison proposition}
For every $W$ in Theorem~\ref{CM height theorem} and $g_f\in K_f$,
\begin{equation}\label{direct comparison}
    \begin{aligned}
    \mathcal D(g,\Phi)={}&-\sum_{\substack{v\nmid\infty\\v\text{ nonsplit}}}
    \avint_{[E^1]}\theta\bigl(g,\tau,\bar k_{\Phi_v}\otimes\Phi^v\bigr)\,\dd\tau\\
    &-\left(\LL\cdot\mathcal P+2\frac{L_f'(0,\eta)}{L_f(0,\eta)}
       -\sum_{v\nmid\infty}a_v(c_v)\log N_v\right)\Omega_\Phi(g).
    \end{aligned}
\end{equation}
The local functions in the theta integrals are Schwartz and zero at almost every place.
\end{proposition}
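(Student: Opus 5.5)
We add \eqref{analytic formula} and \eqref{height decomposition} term by term, with $g_f=1$; both sides are $K_f$-invariant, so this suffices. The terms sort into three groups.

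\emph{Archimedean proper terms.} By \eqref{archimedean proper comparison}, $\overline I'(0,g,\Phi)(v)=2\avint\mathcal M_\Phi^{(v)}$. Hence $-\sum_{v\mid\infty}\overline I'(0,g,\Phi)(v)$ cancels the first sum in \eqref{height decomposition}.

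\emph{Finite nonsplit terms with $y_2\ne0$.} The term $-I'(0,g,\Phi)(v)=-2\avint\mathcal K^{(v)}_\Phi$ combines with $\log N_v\avint\mathcal M^{(v)}_\Phi$. Using $r(\tau_v)$-equivariance of $k$ and $m$ (the torus acts on $W(v)$ only), this gives $-\avint\sum_{y\notin W^\perp}\bar k_{r(\tau_v)\Phi_v}(1,y)\,r(g^v,\tau^v)\Phi^v(y)\,d\tau$. By Propositions~\ref{inert general matching}, \ref{ramified general matching}, \ref{unit local matching} and Corollary~\ref{inert different boundary}, $\bar k_{\Phi_v}$ extends to a Schwartz function on $V(v)_v$. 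Moreover, $\bar k_{\Phi_v}=0$ whenever $r_v=v(d_v)=0$ at an inert place and $v\notin S_0(W)$. It also vanishes at ramified places with standard data, up to the $N_v^{-r_v}$ term recorded in \eqref{ramified full Schwartz difference}. This gives finiteness.

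We then complete the sum over $y\notin W^\perp$ to the full theta series $\theta(g,\tau,\bar k_{\Phi_v}\otimes\Phi^v)$. The terms added are those with $y=y_1\in W^\perp(E)$, and they are $E^1$-invariant. Subtracting them leaves a correction $+\sum_{0\ne x\in W^\perp(E)}\bar k_{\Phi_v}(1,x)\,r(g^v)\Phi^v(x,0)$, since $\Phi_f(0)=0$ removes $x=0$. This produces the first line of \eqref{direct comparison}.

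\emph{Terms on $W^\perp$.} For each $0\ne x\in W^\perp(E)$, we collect
\begin{equation*}
\sum_{v}\bigl(\bar k_{\Phi_v}(1,x)+m^0_{\Phi_v}(x)\log N_v-c_{\Phi_v}(1,x)\bigr)\prod_{w\ne v}\Phi_w(x,0),
\end{equation*}
where $\bar k=0$ at split places. The $S_2$ corrections cancel by Proposition~\ref{split arithmetic correction} against \eqref{split zero derivative}. The local identities above then give the following. The bracket equals $a_v(r_v)+v(d_v)$ at inert places, $r_v+v(d_v)=a_v(c_v)+v(d_v)$ at split places, and $1+a_v(r_v-1)$ at ramified places with $r_v\ge1$. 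At $v\in S_0(W)$ it is $0$, while $v(d_v)=0$ there. At ramified places $v(d_v)=0$ by hypothesis. At inert places $a_v(c_v)=a_v(r_v)$. So each bracket is $(a_v(c_v)+v(d_v)+[v\text{ ramified},\ r_v\ge1])\log N_v$ times $\Phi_v(x,0)$.

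Summing over $v$, we use three identities: $\sum_v v(d_v)\log N_v=\log d_F$, then $\sum_{v\text{ ram}}\log N_v=\log d_{E/F}$, and the fact that $S_0(W)$ is exactly the set of ramified places with $r_v=0$. Hence the total is
\begin{equation*}
\Bigl(\sum_v a_v(c_v)\log N_v+\log d_F+\log d_{E/F}-\sum_{v\in S_0(W)}\log N_v\Bigr)\Omega_\Phi(g).
\end{equation*}
It remains to add $i(x,\mathcal P)$ from Proposition~\ref{restricted degree proposition}, together with $(-c_1+dH_{n-1})\Omega_\Phi$ and the term $(2\log\delta_f+\log|q(x)|_f)$, in which $\log\delta_f(1)=0$. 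In these, $\log|q(x)|_f$, $dH_{n-1}$, the $S_0(W)$ sum, and $\log d_F+\log d_{E/F}$ all cancel. What remains is $-\bigl(\LL\cdot\mathcal P+2L_f'/L_f(0,\eta)-\sum a_v(c_v)\log N_v\bigr)\Omega_\Phi$.

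\emph{Main obstacle.} The delicate point is the bookkeeping at ramified places, where we must check that the extra $+1$ matches $\log d_{E/F}$ exactly, including $S_0(W)$ via Proposition~\ref{unit conormal comparison}. We must also confirm that completing to full theta series introduces precisely the $\bar k(1,x)$ terms, with the correct sign and without double counting against $m^0$.
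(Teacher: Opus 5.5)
Your proposal is correct and follows essentially the same route as the paper: add \eqref{analytic formula} and \eqref{height decomposition}, cancel the archimedean terms by \eqref{archimedean proper comparison}, and complete the $-\bar k_{\Phi_v}$ sums to full theta series, which picks up $+\bar k_{\Phi_v}(1,x)$ on $W^\perp$. Then sum the local identities \eqref{all finite boundaries} against Proposition~\ref{restricted degree proposition}, so that $\log d_F+\log d_{E/F}$, the $S_0(W)$ terms, $\log|q(x)|_f$ and $dH_{n-1}$ all cancel.

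One small slip: at ramified places with $r_v\ge1$ the function $\bar k_{\Phi_v}$ in \eqref{ramified full Schwartz difference} does not vanish. Finiteness still holds, because there are only finitely many ramified places and $\bar k_{\Phi_v}=0$ at every inert place with $v(d_v)=0$.
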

\begin{proof}
The proof is straight-forward. We add \eqref{analytic formula} and \eqref{height decomposition}. The archimedean terms cancel by \eqref{archimedean proper comparison}. At a finite nonsplit place, we complete the sum with coefficient $-\bar k_{\Phi_v}$ to a full theta series. This adds $\bar k_{\Phi_v}(1,x)$ on $W^\perp$, which is zero at $S_0(W)$ by Proposition~\ref{unit local matching}.

By \eqref{adjunction integral equation}, the logarithmic norm and harmonic terms cancel. Moreover, $i(x,\mathcal P)$ already includes the finite intersections with the residual branches. Since $\delta_f(g_f)=1$, these terms contribute the following to the coefficient of $\Omega_\Phi$:
\begin{equation*}
    -\LL\cdot\mathcal P-c_1+\sum_{v\in S_0(W)}\log N_v.
\end{equation*}
At the two modified split places, Proposition~\ref{split arithmetic correction} cancels the zero derivative in \eqref{split zero derivative}. At the other places, the proper comparisons give
\begin{equation}\label{all finite boundaries}
    \begin{aligned}
    &\bar k_{\Phi_v}(1,x)+m^0_{\Phi_v}(x)\log N_v-c_{\Phi_v}(1,x)\\
    &\quad=\begin{cases}
        \bigl(v(d_v)+a_v(c_v)\bigr)\Phi_v(x,0)\log N_v,
             &v\text{ unramified in }E/F,\\
        \bigl(1+a_v(c_v)\bigr)\Phi_v(x,0)\log N_v,
             &v\text{ ramified},\ r_v\ge1,\\
        0,&v\in S_0(W).
    \end{cases}
    \end{aligned}
\end{equation}
Here $\bar k=0$ at a split place. The first two cases are Propositions~\ref{inert general matching}, \ref{split general matching}, \ref{ramified general matching} and Corollary~\ref{inert different boundary}, while the last is Proposition~\ref{unit local matching}.

The sum of \eqref{all finite boundaries} together with the conormal contribution from $S_0(W)$ is
\begin{equation*}
    \log d_F+\log d_{E/F}+\sum_{v\nmid\infty}a_v(c_v)\log N_v.
\end{equation*}
The first two terms cancel the discriminant part of $c_1$. These local identities are compatible with the action of $E^1(\mathbb A)$ and hold for $g_f\in K_f$. Thus, taking the normalized integral over $[E^1]$ with the multiplicities defining $\mathcal P$, we finish the proof.
\end{proof}

\subsection{Proofs of the main theorems}

The comparison of generating series follows the method of \cite{YZZ}. The pseudo-theta series and the Key lemma used to separate their contributions were introduced in \cite[Section~6, Lemma~6.1]{YZ}; \cite[Lemma~2.3]{Yuan} extends the argument to include pseudo-Eisenstein series. We apply the unitary form in \cite[Lemma~2.4]{Guo1}. Here the local differences have already been completed to theta series, so the lemma determines the coefficient of $\Omega_\Phi$ in \eqref{direct comparison}.

\begin{proof}[Proof of Theorem~\ref{CM height theorem}]
We choose a sufficiently divisible positive integer $b$ such that
\begin{equation*}
    \gamma=\begin{pmatrix}1&0\\b&1\end{pmatrix}\in\mathrm{SU}(1,1)(F),
    \qquad\gamma_f\in K_f.
\end{equation*}
The automorphic forms in \eqref{direct comparison}, and the theta series on $W^\perp$, have the same values at $(g_\infty,1)$ and $(\gamma_\infty g_\infty,1)$. Subtracting and using \eqref{theta factorization} gives
\begin{equation*}
    \begin{aligned}
    0={}&\left(\LL\cdot\mathcal P+2\frac{L_f'(0,\eta)}{L_f(0,\eta)}
       -\sum_{v\nmid\infty}a_v(c_v)\log N_v\right)
       \theta(g_\infty,1,\Phi^\perp)\\
    &\cdot\bigl(r_{\BW_\infty}(\gamma_\infty g_\infty)\Phi_{\BW,\infty}(0)
       -r_{\BW_\infty}(g_\infty)\Phi_{\BW,\infty}(0)\bigr).
    \end{aligned}
\end{equation*}
For $g_\infty=\prod_{v\mid\infty}m(a)$, $a>0$, the ratio of the absolute values of the two Gaussian factors is $(1+b^2a^4)^{-d/2}<1$. Their difference is nonzero. Lemma~\ref{nonzero rational theta coefficient} makes the theta factor nonzero for sufficiently small $a$. Then the coefficient in parentheses is zero.
\end{proof}

\begin{proof}[Proof of Theorem~\ref{ASW main theorem}]
We substitute Theorem~\ref{CM height theorem} into \eqref{direct comparison}. This makes the coefficient of $\Omega_\Phi$ zero and gives \eqref{ASW main identity}.
\end{proof}

At the end of this paper, we give a remark to explain why the main Theorem~\ref{CM height theorem} remains valid when $F=\mathbb{Q}$.

\begin{remark}\label{F=Q remark}
    Theorem~\ref{CM height theorem} also holds for $F=\Q$, with the other assumptions unchanged and $S_0(W)=\varnothing$. For $n=1$, the explicit comparison in \cite[Section~3]{Guo2} identifies differences of normalized CM heights with twice the corresponding differences of stable Faltings heights of CM elliptic curves. Indeed, the line bundles from the fixed auxiliary CM datum cancel, and the factor two comes from the degree normalization over $E$. By Proposition~\ref{local CM order}, the quadratic order is $\mathbb Z+f\mathcal O_E$ where $f=\prod_p p^{c_p}$. Thus \cite{NT}, together with the maximal-order formula in \cite[Theorem~1.2 and Section~1.4]{Yuan}, proves the assertion for curves.

    For $n\ge2$, let $T$ be any finite set of primes containing those ramified in $E/\Q$ and those with $c_p>0$, and fix $0\ne w\in W$. The rank-two summands in Lemma~\ref{all place line types} are hyperbolic, since $S_0(W)=\varnothing$. By the Hasse principle and weak approximation, we choose $x\in W^\perp$ with $q(x)=-q(w)$ that satisfies the lattice conditions in Section \ref{notation subsection} at every $p\in T$. Then $W\oplus Ex$ is a hyperbolic plane. We use the auxiliary curve construction and the compatibility of normalized CM cycles and Hodge line bundles in \cite[Section~3.1]{Guo2}, together with Lemma~\ref{Hodge stabilization}. The comparison is an isometry at infinity and an integral isomorphism outside a finite set $S$ disjoint from $T$. The curve formula just proved, with the same integers $c_p$ outside $S$, consequently gives
    \begin{equation*}
       \LL\cdot\mathcal P+2\frac{L_f'(0,\eta)}{L_f(0,\eta)}-\sum_p a_p(c_p)\log p\in\sum_{p\in S}\Q\log p.
    \end{equation*}
    We now add the primes in $S$ to $T$ and choose another auxiliary curve. This gives the same inclusion for a new finite set $S'$ disjoint from $S$. The linear independence over $\Q$ of the logarithms of distinct primes then proves the height formula.
\end{remark}


\begin{thebibliography}{AGHMP}

\bibitem[AGHMP]{AGHMP}
F. Andreatta, E.~Z. Goren, B. Howard, and K. Madapusi Pera. Faltings heights of abelian varieties with complex multiplication. Ann. of Math. (2) \textbf{187} (2018), 391--531.

\bibitem[BHKRY]{BHKRY}
J.~H. Bruinier, B. Howard, S.~S. Kudla, M. Rapoport, and T. Yang. Modularity of generating series of divisors on unitary Shimura varieties II: arithmetic applications. Ast\'erisque \textbf{421} (2020), 127--186.

\bibitem[BHY]{BHY}
J.~H. Bruinier, B. Howard, and T. Yang. Heights of Kudla--Rapoport divisors and derivatives of $L$-functions. Invent. Math. \textbf{201} (2015), 1--95.

\bibitem[BY]{BY}
J.~H. Bruinier and T. Yang. Faltings heights of CM cycles and derivatives of $L$-functions. Invent. Math. \textbf{177} (2009), 631--681.

\bibitem[Chen]{Chen}
R.~C. Chen. Co-rank 1 arithmetic Siegel--Weil I: Local non-Archimedean. \href{https://arxiv.org/abs/2405.01426}{arXiv:2405.01426}, 2024.

\bibitem[Col]{Colmez}
P. Colmez. P\'eriodes des vari\'et\'es ab\'eliennes \`a multiplication complexe. Ann. of Math. (2) \textbf{138} (1993), 625--683.

\bibitem[GS]{GS}
H. Gillet and C. Soul\'e. Arithmetic intersection theory. Publ. Math. Inst. Hautes \'Etudes Sci. \textbf{72} (1990), 93--174.

\bibitem[Guo1]{Guo1}
Z. Guo. Modular Heights of Unitary Shimura Varieties I: Derivatives of Eisenstein Series. Revised manuscript, 2026. \href{https://arxiv.org/abs/2509.24363}{arXiv:2509.24363}.

\bibitem[Guo2]{Guo2}
Z. Guo. Modular Heights of Unitary Shimura Varieties II: Arithmetic Generating Series of Divisors. Revised manuscript, 2026. \href{https://arxiv.org/abs/2606.18579}{arXiv:2606.18579}.

\bibitem[Guo3]{Guo3}
Z. Guo. Modular Heights of Unitary Shimura Varieties III: Proof of the Main Theorem. Revised manuscript, 2026. \href{https://arxiv.org/abs/2606.19158}{arXiv:2606.19158}.

\bibitem[How]{Howard}
B. Howard. Linear invariance of intersections on unitary Rapoport--Zink spaces. Forum Math. \textbf{31} (2019), 1265--1281.

\bibitem[KR]{KR}
S. Kudla and M. Rapoport. Special cycles on unitary Shimura varieties I. Unramified local theory. Invent. Math. \textbf{184} (2011), 629--682.

\bibitem[KRY1]{KRY1}
S.~S. Kudla, M. Rapoport, and T. Yang. Derivatives of Eisenstein series and Faltings heights. Compos. Math. \textbf{140} (2004), no.~4, 887--951.

\bibitem[KRY2]{KRY}
S. Kudla, M. Rapoport, and T. Yang. \emph{Modular Forms and Special Cycles on Shimura Curves}. Annals of Mathematics Studies \textbf{161}, Princeton University Press, 2006.

\bibitem[Kud]{Kudla}
S.~S. Kudla. Special cycles and derivatives of Eisenstein series. In \emph{Heegner Points and Rankin $L$-Series}, Math. Sci. Res. Inst. Publ. \textbf{49}, Cambridge University Press, 2004, 243--270.

\bibitem[Liu1]{Liu1}
Y. Liu. Arithmetic theta lifting and $L$-derivatives for unitary groups, I. Algebra Number Theory \textbf{5} (2011), 849--921.

\bibitem[Liu2]{Liu2}
Y. Liu. Arithmetic theta lifting and $L$-derivatives for unitary groups, II. Algebra Number Theory \textbf{5} (2011), 923--1000.

\bibitem[LL]{LL}
C. Li and Y. Liu. Chow groups and $L$-derivatives of automorphic motives for unitary groups, II. Forum Math. Pi \textbf{10} (2022), e5. We use the numbering of the published article.

\bibitem[LMZ]{LMZ}
Y. Luo, A. Mihatsch, and Z. Zhang. Unitary Shimura varieties at ramified primes and arithmetic transfer. \href{https://arxiv.org/abs/2504.17484v2}{arXiv:2504.17484v2}, 2025.

\bibitem[Mih]{Mih}
A. Mihatsch. An arithmetic transfer identity. Manuscripta Math. \textbf{150} (2016), 1--19. The local numbering used here is that of \href{https://arxiv.org/abs/1406.7764v3}{arXiv:1406.7764v3}.

\bibitem[NT]{NT}
Y. Nakkajima and Y. Taguchi. A generalization of the Chowla--Selberg formula. J. Reine Angew. Math. \textbf{419} (1991), 119--124.

\bibitem[OT]{OT}
T. Oda and M. Tsuzuki. Automorphic Green functions associated with the secondary spherical functions. Publ. Res. Inst. Math. Sci. \textbf{39} (2003), 451--533.

\bibitem[Qiu]{Qiu}
C. Qiu, with an appendix by Y. Xu. Modularity of arithmetic special divisors for unitary Shimura varieties. Forum Math. Sigma \textbf{13} (2025), e111.

\bibitem[RSZ1]{RSZ1}
M. Rapoport, B. Smithling, and W. Zhang. On the arithmetic transfer conjecture for exotic smooth formal moduli spaces. Duke Math. J. \textbf{166} (2017), 2183--2336.

\bibitem[RSZ2]{RSZ2}
M. Rapoport, B. Smithling, and W. Zhang. Regular formal moduli spaces and arithmetic transfer conjectures. Math. Ann. \textbf{370} (2018), 1079--1175.

\bibitem[RSZ3]{RSZ3}
M. Rapoport, B. Smithling, and W. Zhang. On Shimura varieties for unitary groups. Pure Appl. Math. Q. \textbf{17} (2021), 773--837.

\bibitem[Smi]{Smithling}
B. Smithling. On the moduli description of local models for ramified unitary groups. Int. Math. Res. Not. IMRN \textbf{2015} (2015), 13493--13532.

\bibitem[Tate]{Tate}
J.~T. Tate. $p$-divisible groups. In \emph{Proceedings of a Conference on Local Fields, Driebergen, 1966}, Springer, 1967, 158--183.

\bibitem[Wew1]{Wew1}
S. Wewers. Canonical and quasi-canonical liftings. In \emph{ARGOS Seminar on Intersections of Modular Correspondences}, Ast\'erisque \textbf{312} (2007), 67--86.

\bibitem[Wew2]{Wew2}
S. Wewers. An alternative approach using ideal bases. In \emph{ARGOS Seminar on Intersections of Modular Correspondences}, Ast\'erisque \textbf{312} (2007), 171--177.

\bibitem[Yao]{Yao}
H. Yao. A Kudla--Rapoport formula for exotic smooth models of odd dimension. \href{https://arxiv.org/abs/2404.14431v1}{arXiv:2404.14431v1}, 2024.

\bibitem[Yuan]{Yuan}
X. Yuan. Modular Heights of Quaternionic Shimura Curves. \href{https://arxiv.org/abs/2205.13995v3}{arXiv:2205.13995v3}, 2024.

\bibitem[YZ]{YZ}
X. Yuan and S.-W. Zhang. On the averaged Colmez conjecture. Ann. of Math. (2) \textbf{187} (2018), 533--638; erratum, Ann. of Math. (2) \textbf{198} (2023), 867--878.

\bibitem[YZZ]{YZZ}
X. Yuan, S.-W. Zhang, and W. Zhang. The Gross--Zagier Formula on Shimura Curves. Annals of Mathematics Studies \textbf{184}, Princeton University Press, 2013.

\bibitem[Zhao1]{Zhao1}
R. Zhao. Heights of special points on quaternionic Shimura varieties. Preprint, 2023. \href{https://arxiv.org/abs/2309.08886}{arXiv:2309.08886}.

\bibitem[Zhao2]{Zhao2}
R. Zhao. Towards the Colmez conjecture. Acta Arith. \textbf{223} (2026), 207--226.

\end{thebibliography}
\end{document}